\newtheorem{theorem}{Theorem}
\theoremstyle{plain}
\newtheorem{definition}{Definition}
\newtheorem{proposition}[definition]{Proposition}
\newtheorem{remark}[definition]{Remark}
\newtheorem{corollary}[definition]{Corollary}
\numberwithin{equation}{section}
\newcommand{\R}{\mathbb{R}}
\newcommand{\C}{\mathbb{C}}
\newcommand{\RR}{\mathcal{R}}
\renewcommand{\SS}{\mathcal{S}}
\newcommand{\EE}{\mathcal{E}}
\newcommand{\CC}{\mathcal{C}}
\newcommand{\LL}{\mathfrak{L}}
\newcommand{\Centr}{\mathfrak{C}}
\newcommand{\KK}{\mathcal{K}}
\newcommand{\Id}{\mathrm{Id}}
\newcommand{\Exp}[1]{\mathrm{Exp}(#1)}
\begin{document}
\title{Jointly Equivariant Dynamics for Interacting Particles}
\author{Alain Ajami}
\address{Universit\'{e} Saint Joseph de Beyrouth, Facult\'{e} d'ing\'{e}gnierie, ESIB, Lebanon, {\tt alain.ajami@usj.edu.lb}}
\author{Jean-Paul Gauthier}
\address{Universit\'{e} de Toulon, LIS, UMR CNRS 7020, Campus de La Garde, 83041
Toulon, France {\tt gauthier@univ-tln.fr}}
\author{Francesco Rossi\footnote{Corresponding Author. The Author is a member of G.N.A.M.P.A. (I.N.d.A.M.).}}
\address{Dipartimento di Culture del Progetto, Università Iuav di Venezia, Italy {\tt francesco.rossi@iuav.it}}
\date{\today}

\begin{abstract}Let a finite set of interacting particles be given, together with a symmetry Lie group $G$. Here we describe all possible
dynamics that are jointly equivariant with respect to the action of $G$. This
is relevant e.g., when one aims to describe collective dynamics that
are independent of any coordinate change or external influence.

We particularize the results to some key examples, i.e. for the most
basic low dimensional symmetries that appear in collective dynamics on manifolds.

\end{abstract}

\begin{keyword} Large-scale systems, Lie groups, equivariant dynamics, systems with symmetry
\end{keyword}

\maketitle


\section{Introduction}
The aim of this article is to characterize all possible interaction models for
$N$ agents evolving on a manifold $M $, that are jointly equivariant with
respect to the action of a group $G$ on $M$.  Equivariance means that an
identical action of the group on each agent provides a change of the
interaction vector field that is equivariant with respect to such action. As a
result, the corresponding dynamics is equivariant too, hence the relative
dynamics between agents is unchanged under the group action.

The simplest (and very interesting) example is the case in which a group of agents evolve on the plane
$M=\mathbb{R}^{2}$ and $G=SE(2)$ is the Lie group of rototranslations: joint
equivariance of interactions in this case means that a rototranslation of the
initial data induces the same rototranslation of the trajectory. More deeply,
this implies that one of the agents (or an external observer subject to the same
rototranslation) will perceive the same trajectory. This example shows the
relevance of the research here: to classify all interactions based on some
\textquotedblleft natural\textquotedblright\ features that can be perceived by
agents and not on some \textquotedblleft superimposed\textquotedblright\  
 structures (such as coordinates), that are added to the model just for other 
purposes (e.g. for parametrization).

The case of dynamics that are invariant with respect to rototranslations, that
we will describe in full details in Section \ref{s-SE2}, has been studied in a
cornucopia of articles and examples. A nonexhaustive bibliography is the
following: \cite{Un, Trois, Quatre, Cinq,Six, Huit, Dix, Douze, CS,
Seize, Dix9,Vingt,Vingt1,Vingt3}.  The main result of our article in this
setting is not to propose yet other models, but to prove a much stronger
result: we will provide all possible dynamics in full generality. Moreover,
such general dynamics will be written in some normal forms, in which the role
of each term is identified for the purpose of modeling.

This example also shows the interest of the classification from a reverse
point of view. If we describe the dynamics of a group of agents with a given
interaction model and remark that it is not jointly equivariant with respect
to a natural group action (e.g. the rototranslations on the plane as above),
this means that an external factor breaks down the natural symmetry of the
group. As an example, several models describing flocks flying on large
distances on the Earth are not jointly equivariant with respect to the natural
group of rotations on the sphere, since they take into account the geomagnetic
field. If the description of the dynamics based on these models is efficient,
this can be seen as a (secondary) evidence of the fact that birds perceive the
magnetic field, i.e. their magnetoreceptivity. See e.g. \cite{Vingt4,Vingt6}.\\


The main aim of this article is then twofold. On a more theoretical side, we
aim to find all possible interaction models, under some assumptions of
equivariance or joint equivariance. More precisely, given a manifold $M$ and a
Lie group $G$ acting on it, we want to describe all dynamics of $N$ particles
evolving on $M$ that are jointly equivariant with respect to the action of
$G$. In particular, we want to identify the minimal number of (functional)
parameters that characterize such dynamics. On the more applied side, we aim
to understand the role of such parameters in modeling, i.e. which
\textquotedblleft part\textquotedblright\ of interactions they describe. With
this goal, we then find all interaction models for key examples determined by triples $(N,M,G)$. We finally choose simple functional
parameters (e.g. constant or linear functions) to understand their role via
some simulations.

One of the interesting features of this study is that the results depend on
the number of agents $N$ in an unexpected way. On one side, the case $N = 1$ is often
far from being trivial (see for instance Sections \ref{s-SO2} and \ref{s-SL2}). On the other
side, some results are proved when $N$ is sufficiently large with respect to the
dimension of the manifold (see Theorem \ref{mainth}). As a consequence, our
methods do not cover \textquotedblleft intermediate\textquotedblright\ numbers
of agents, in general, (see e.g. Remark \ref{r-proper-inclusion-2}).

All along the article, we consider interacting agents that may have
distinct features, i.e. with distinct rules for dynamics. Also, in several examples (starting from statistical mechanics), agents are supposed
to satisfy identical rules for the dynamics. In other terms, agents are only
distinguishable by their state (e.g. position, or position/velocity) and not by other features. We
call such models ``permutation-equivariant'' populations. From the mathematical
viewpoint, this means that the dynamics is equivariant with respect to
permutation of agents: if $x_{i}(0)$ and $x_{j}(0)$ are exchanged, their
trajectories are exchanged too, while trajectories of other agents do not
vary. We then study the family of interaction models that satisfy
equivariance with respect to both the action of a connected Lie group and
to permutations of agents. Clearly, these families have much less parameters
than the ones studied before, but their structure is very similar.

As already stated, in this article we deal with a finite number $N$ of
interacting agents. The limit $N\rightarrow+\infty$ of permutation-equivariant
populations, mathematically defined via the mean-field limit, has been hugely
studied in the literature, see e.g.\cite{Neuf,Treize,elam,Dix7,Dix8,Vingt2}. This
setting plays a key role as a good approximation of the dynamics when the
number of agents is very large, e.g. in bird flocks. We aim to describe the
class of admissible equivariant dynamics in the mean-field limit in a future work.

We also consider the case of controlled dynamics, in which the dynamics $F$
is constrained to belong to a subset of the vector fields on $M$. The most
interesting case here is given by nonholonomic systems, in which a constraint on
the velocity of agents is added to the interaction field. As an example, the
unicycle describes the dynamics of an oriented agent, that can move forward or
rotate on itself: this constrained dynamics is also used for modeling
pedestrians, see e.g. \cite{Deux, Onze}.

All along the article, we deal with first-order dynamics (eventually with
nonholonomic constraints as explained above). This choice, even though very common in many
interaction models (e.g. \cite{Cinq}), is in contrast with a large number of
other models, in which dynamics are of second-order
\cite{Un,Trois,Sept,Vingt5}. The second-order models are chosen for several
different reasons:

\begin{itemize}
\item when velocities play the role of \textquotedblleft state
variables\textquotedblright\ for the dynamics, e.g. when one aims to describe
alignment in bird flocks (e.g. in the celebrated Cucker-Smale model
\cite{CS});

\item when the basic dynamics of agents is of second-order, e.g. when dealing
with classical equations of physics;

\item when one aims to describe orientation of agents via their velocity vectors.
\end{itemize}

In this last case, we show that a different solution can be introduced,
via constrained dynamics. As we already recalled above, one can describe a
dynamics in which the trajectory is always tangent to the orientation of the
body, by adding an angle variable and a nonholonomic constraint. This
description has the advantage of directly describing the desired constraint,
without resorting to higher-order dynamics that are not justified by the
physical model. See more details in Section \ref{s-unicycle}.\\

There is an enormous amount of mathematical contributions about equivariant dynamical systems, which theory is well-developed. Among many of them, the following references are particularly interesting for our setting: \cite{field,kla,matsui}. In the present work, we do not use the tools and results from this field. The originality of our contribution is the choice of the specific diagonal action for several agents. In a forthcoming paper, we will present (still in the context of the diagonal action) several results about equilibria, relative equilibria and stabilization. The present paper also aims to highlight the mathematical interest of equivariant diagonal actions.\\

The structure of the article is the following. We give the main definition related to equivariance in Section \ref{s-general}. There, we also prove here the main general theorems about the structure of Families of Population Dynamics and their properties. We then study several key examples, in which we also highlight the usefulness of the theory we developed:
\begin{itemize}
\item In Section \ref{s-SE2} we study the most important case: the dynamics on the plane $\R^2$ that are equivariant under rototranslations;
\item In Section \ref{relati} we study the relativistic dynamics, both on the line and on the plane;
\item In Section \ref{s-sphere} we study the dynamics on both the circle $S^1$ and the sphere $S^2$, where the group actions are the natural ones ($SO(2,\R)$ and $SO(3, \R)$);
\item In Section \ref{s-SL2}, we briefly present the volume-preserving action of $SL(2)$ on the plane;
\item In Section \ref{s-unicycle}, we describe equivariant dynamics for a well-known non-holonomic control system, namely the unicycle. We present both the classical and relativistic cases.
\item In Section \ref{s-quantum}, we briefly introduce the generalization of our theory to quantum systems and apply it to a system of two quantum agents.
\end{itemize}

In this article, the theory of Lie groups and Lie algebras plays a crucial role. We recall some relevant definitions and results in Appendix.

\section{Definitions and General Results}\label{s-general}

In this section we define the main objects under study, and provide a few
general results about Population Dynamics.

{\bf Notation:} The space of smooth vector fields defined on a domain $D$ is denoted by $\chi(D)$. The Lie derivative of a form $\omega$ along a vector field $v$ will be denoted by $L_{v}\omega$. The matrix multiplication $A.B$ is denoted with a dot.

\subsection{Equivariance: $G$-spaces and radial functions}\label{s-radial}

In this section, we define some objects related to the concept of equivariance. They are stated in terms of Lie groups and Lie algebras, which standard definitions are recalled in Appendix. Throughout the article, we need to work in the analytic setting to ensure equivalence between integral definitions (in terms of Lie groups) and their differential counterpart (in terms of Lie algebras), as it will be clear in the next results.

\begin{definition}[$G$-space] Given $G$ a connected Lie group, an analytic manifold $X$ is a $G$-space if $G$ acts analytically on $X$, with an action denoted by $\Phi(g,x)$.

Let $F$ be an analytic vector field over $X$ and $\mathrm{Exp}(t F(x_0))$ be the associated flow, i.e. the unique solution of the Cauchy problem $$\begin{cases}
\dot x=F(x),\\
x(0)=x_0,
\end{cases}$$
for times $t\in\R$ for which the solution exists. 

The vector field $F$ is said to be $G$-equivariant
if it holds 
\begin{equation}
\Phi(g,\mathrm{Exp}(tF(x)))=\mathrm{Exp}(tF(\Phi(g,x))), \label{equiv1}%
\end{equation}
for all $t,g,x$ such that both sides are defined.
\end{definition}

We now prove the equivalent differential version of \eqref{equiv1}, in the setting of Lie algebras. Additionally, this will prove that the sets on which the left and right hand sides of \eqref{equiv1} are defined coincide. Given $l\in\LL$, the Lie algebra of $G$, we identify it with the vector field $l(x)$ on $X$ as the following: $l(x)=\frac{d}{dt}_{|t=0}\Phi(\mathrm{Exp}(t l),x)$.

\begin{proposition}
\label{equivp1}The vector field $F$ is $G$-equivariant if and only if $[F,l(x)]=0$ for
all $l\in\LL$, $x\in X$.
\end{proposition}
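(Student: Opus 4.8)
The plan is to translate the integral condition \eqref{equiv1} into a statement about how the flow of $F$ transports the fundamental vector fields $l(x)$, and then to observe that the vanishing of $L_F\,l(x)=[F,l(x)]$ is exactly what makes this transport trivial. Throughout I write $\phi_t(x):=\mathrm{Exp}(tF(x))$ for the (local) flow of $F$ and $\Phi_g:=\Phi(g,\cdot)$ for the diffeomorphism induced by $g\in G$, recalling that the flow of the complete vector field $l(x)$ is precisely $s\mapsto\Phi_{\mathrm{Exp}(sl)}$.

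For the forward implication I would assume \eqref{equiv1}, fix $l\in\LL$ and $x\in X$, and take $t$ in the maximal interval of existence of $\phi_t(x)$. Setting $g=\mathrm{Exp}(sl)$, both sides of \eqref{equiv1} are defined for $s$ near $0$ (the left side because the action is global, the right side by openness of the domain of the flow), so I may differentiate in $s$ at $s=0$. The left side yields $l(\phi_t(x))$ and the right side yields $(d_x\phi_t)\,l(x)$ by the chain rule together with the definition $l(x)=\frac{d}{ds}\big|_{s=0}\Phi(\mathrm{Exp}(sl),x)$. Hence $(\phi_t)_*\,l=l$ for every admissible $t$, equivalently $\phi_t^*\,l=l$; differentiating in $t$ at $t=0$ gives $L_F\,l=[F,l(x)]=0$, and since $l$ was arbitrary the forward direction follows.

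Conversely, assume $[F,l(x)]=0$ for all $l\in\LL$. Then $\frac{d}{dt}\big(\phi_t^*\,l\big)=\phi_t^*\big(L_F\,l\big)=0$, so $\phi_t^*\,l=l$ wherever $\phi_t$ is defined, i.e. the local diffeomorphism $\phi_t$ intertwines the fundamental field $l(x)$ with itself. Consequently $\phi_t$ conjugates the flow of $l(x)$ to itself, which reads $\phi_t\circ\Phi_{\mathrm{Exp}(sl)}=\Phi_{\mathrm{Exp}(sl)}\circ\phi_t$; this is exactly \eqref{equiv1} for the one-parameter subgroup $g=\mathrm{Exp}(sl)$. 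The final step promotes this to all of $G$: since $G$ is connected, every $g\in G$ factors as $g=\mathrm{Exp}(l_1)\cdots\mathrm{Exp}(l_k)$, and because $\Phi$ is an action ($\Phi_{g_1g_2}=\Phi_{g_1}\circ\Phi_{g_2}$) the commutation relation composes to give \eqref{equiv1} for arbitrary $g$.

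The step I expect to require the most care is the bookkeeping of domains, since $F$ need not be complete. I would check that each $\Phi_g$, being a global diffeomorphism with $(\Phi_g)_*F=F$, maps integral curves of $F$ to integral curves with the same interval of existence; this is what guarantees that the two sides of \eqref{equiv1} are simultaneously defined, and it also yields the auxiliary claim in the text that their domains coincide. The analytic setting is what lets me pass freely between the local bracket formulation and the global flow formulation, while connectedness of $G$ is the other essential ingredient: without it the infinitesimal condition would only detect the identity component.
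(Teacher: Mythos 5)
Your proof is correct, but it takes a genuinely different route from the paper's, and the difference is worth recording. The paper proves the proposition only in a simplified model: ``for simplicity of exposition'' it assumes $G$ is an \emph{exponential connected linear} group acting linearly on an open set $X\subset\R^{k}$, so that \eqref{equiv1} becomes the matrix identity \eqref{equiv2}. Its necessity argument is the same differentiation as yours, in the opposite order (first in $t$ at $t=0$, giving $g.F(x)=F(g.x)$, then in the group parameter). Its sufficiency argument is where the standing analyticity hypothesis enters: it expands $\Exp{-tl}.F(\Exp{tl}.x)=\sum_{r\geq0}(ad(l)^{r}F)(x)\,t^{r}/r!$, observes that $[F,l.x]=0$ kills all terms but the first, deduces $F(g.x)=g.F(x)$, and then recovers the flow identity by an ODE-uniqueness argument: the curve $B(t,x)=g^{-1}.\Exp{tF(g.x)}$ solves the same Cauchy problem as $\Exp{tF(x)}$, which also yields equality of the two domains. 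You replace the Lie series by the flow-calculus identity $\frac{d}{dt}\bigl(\phi_t^{*}l\bigr)=\phi_t^{*}(L_F l)$, which is intrinsic and valid for merely smooth data; consequently your argument proves the proposition in the generality in which it is stated (any connected $G$ acting on a manifold, with no exponentiality or linearity assumption, connectedness being used through factorizations $g=\Exp{l_1}\cdots\Exp{l_k}$), whereas the paper's proof covers a representative special case at the price of analyticity. Your domain bookkeeping is, in coordinate-free language, exactly the paper's $B(t,x)$ computation: saying that $\Phi_g$ carries maximal integral curves of $F$ to maximal integral curves is the same uniqueness-plus-maximality argument.

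One point deserves care in a final write-up. In your converse, the commutation $\phi_t\circ\Phi_{\Exp{sl}}=\Phi_{\Exp{sl}}\circ\phi_t$ that you extract from $\phi_t^{*}l=l$ is, by the integral-curve argument, established only for $s$ in the connected component of $0$ of $\{s:\Phi(\Exp{sl},x)\in\mathrm{dom}\,\phi_t\}$; since $F$ need not be complete, this set can a priori be disconnected, while \eqref{equiv1} is asserted for \emph{all} $s$ for which both sides are defined. This is precisely the gap your last paragraph closes: the symmetric statement $L_l F=0$ gives $(\Phi_{\Exp{sl}})_{*}F=F$ globally (the $l$-flow is complete because the action is global), hence $(\Phi_g)_{*}F=F$ for every $g$ by connectedness, and then the maximal-integral-curve argument delivers \eqref{equiv1} on the full common domain together with the coincidence of domains. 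So the pushforward relation $(\Phi_g)_{*}F=F$ should be derived explicitly and serve as the primary mechanism of the converse, rather than appear as after-the-fact bookkeeping.
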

\begin{proof} For simplicity of exposition, we assume that $G$ is an exponential connected linear Lie group, $X$ is an open subset of $\mathbb{R}^{k} $, invariant under the linear $G$-action, i.e. that $\Phi(g,x)=g.x$. In that case, the Lie algebra $\LL$ of $G$ is a Lie subalgebra of the space $\mathcal{M}_{k}$ of $k\times k$ real matrices. The equality \eqref{equiv1} then rewrites as 
\begin{equation}
g.\mathrm{Exp}(tF(x))=\Exp{tF(g.x)}. \label{equiv2}%
\end{equation}

Necessity: Differentiate \eqref{equiv2} at $t=0$ to get $g.F(x)=F(g.x)$. Take  $g=\Exp{vl}$ and differentiate with respect to $v$ at $v=0$ to get the result.

Sufficiency: Assume $[F,l.x]=0 $ for all $l\in\LL$ and define%
\[
A(t,x):=\Exp{-tl}. F(\Exp{tl}.x)=\sum_{r=0}^{+\infty}(ad(l)^{r}F)(x)\frac{t^{r}}{r!},
\]
where $ad(l)^r F:=[l,ad(l)^{r-1} F]$ for $r\in \mathbb{N}\setminus\{0\}$ and $ad(l)^0 F=F$. Here, the series is absolutely convergent in $t$, due to analyticity. Then $A(t,x)=F(x)$, thus $F(\Exp{tl}.x)=\Exp{tl}.F(x)$ for all $x,t$.  It means that $F(g.x)=g.F(x) $, or equivalently 
\begin{equation}
\label{e-F}
F(x)=g^{-1}.F(g.x)\mbox{~~for all~~}g\in G.
\end{equation}

For a fixed $g\in G$, set $B(t,x)=g^{-1}.\Exp{tF(g.x)}$. We then have:%
\begin{equation}\label{e-B}
\partial_tB   =g^{-1}.F(\Exp{tF(g.x)})  =g^{-1}.F(g.B)=F(B),
\end{equation}
where the last identity comes from \eqref{e-F}. By observing that $B(0,x)=x$, we have that \eqref{e-B} implies $B=\Exp{tF(x)}$, that in turn proves \eqref{equiv2}, for small $t$. Moreover, this also shows that both sides of the equality extend to the same $t$.
\end{proof}

We now define radial functions and characterize them in differential terms.
\begin{definition} An analytic function $\phi:D_\phi\subset X \to \R$, with $D_\phi$ being an open dense $G$-space, is radial if it is constant on the $G$-orbits in $X$.
\end{definition}

\begin{proposition}
\label{proprad}The function $\varphi$ is a radial function if and only if \[
L_{lx}\varphi=0\mbox{~~for all~}l\in\LL.
\]
The space of radial functions is an Abelian ring, that we denote by $\RR$.
\end{proposition}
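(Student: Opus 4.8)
The plan is to prove both implications by passing between the integral statement (constancy on orbits) and its infinitesimal counterpart, exactly in the spirit of Proposition \ref{equivp1}. For necessity, I would assume $\varphi$ is radial, so that $\varphi(\Phi(g,x))=\varphi(x)$ for every $g\in G$ and every $x\in D_\varphi$. Specializing to the one-parameter subgroup $g=\Exp{tl}$ and differentiating the identity $\varphi(\Phi(\Exp{tl},x))=\varphi(x)$ at $t=0$ yields $L_{lx}\varphi(x)=0$, directly from the definition $l(x)=\frac{d}{dt}_{|t=0}\Phi(\Exp{tl},x)$. Since $l\in\LL$ is arbitrary, this gives one direction.

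For sufficiency, I would fix $l\in\LL$ and $x\in D_\varphi$ and study the scalar function $h(t):=\varphi(\Phi(\Exp{tl},x))$, which is well defined and analytic because $D_\varphi$ is a $G$-space (hence $G$-invariant) and both the action and $\varphi$ are analytic. Using that $t\mapsto\Exp{tl}$ is a one-parameter subgroup together with the action property $\Phi(g_1g_2,x)=\Phi(g_1,\Phi(g_2,x))$, I can write $h(t+s)=\varphi(\Phi(\Exp{sl},y))$ with $y=\Phi(\Exp{tl},x)$; differentiating in $s$ at $s=0$ gives $h'(t)=(L_{lx}\varphi)(y)=0$ by hypothesis. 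Hence $h$ is constant, so $\varphi(\Phi(\Exp{tl},x))=\varphi(x)$ for all $t$. Because $G$ is connected, every $g\in G$ is a finite product of exponentials $\Exp{t_1l_1}\cdots\Exp{t_kl_k}$; applying the previous step repeatedly along the orbit shows $\varphi(\Phi(g,x))=\varphi(x)$, i.e. $\varphi$ is constant on $G$-orbits and is therefore radial.

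For the ring structure I would argue using the characterization just established, exploiting that $v\mapsto L_{lx}v$ is $\R$-linear and satisfies the Leibniz rule, so that $L_{lx}(\varphi+\psi)=L_{lx}\varphi+L_{lx}\psi$ and $L_{lx}(\varphi\psi)=\varphi\,L_{lx}\psi+\psi\,L_{lx}\varphi$. If $\varphi,\psi\in\RR$ then all right-hand sides vanish for every $l\in\LL$, so both $\varphi+\psi$ and $\varphi\psi$ are radial; the product is commutative because it is pointwise multiplication of real-valued functions, and the constant functions are radial, supplying the zero and the unit. On the domain side one passes to the common domain $D_\varphi\cap D_\psi$, which is still open, dense and $G$-invariant.

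I expect the only genuinely delicate point to be the sufficiency direction: one must ensure that the flow line $t\mapsto\Phi(\Exp{tl},x)$ never leaves the set on which $\varphi$ is defined, which is exactly what the requirement that $D_\varphi$ be a $G$-space guarantees, and one must invoke connectedness of $G$ to pass from one-parameter subgroups to the whole group. In the exponential setting assumed in Proposition \ref{equivp1} this last step is even simpler, since a single exponential already covers all of $G$.
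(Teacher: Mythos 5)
Your proof is correct, and the ring-structure part coincides with the paper's (linearity plus Leibniz rule, pointwise commutativity). Where you genuinely diverge is the sufficiency direction. The paper exploits analyticity: it expands $\varphi(\Exp{t\, lx})$ as the Lie series $\sum_{r=0}^{+\infty}L_{lx}^{r}(\varphi)(x)\frac{t^{r}}{r!}$ and observes that independence of $t$ is equivalent to the vanishing of all coefficients of order $r\geq 1$, which is in turn equivalent to $L_{lx}\varphi\equiv 0$ on $D_\varphi$; this is the same Lie-series device already used for Proposition \ref{equivp1}, so the argument is a one-liner within the paper's standing analyticity assumptions. You instead show that $h(t)=\varphi(\Phi(\Exp{tl},x))$ has identically vanishing derivative, by using the group property $\Phi(g_1g_2,x)=\Phi(g_1,\Phi(g_2,x))$ to evaluate the hypothesis $L_{lx}\varphi=0$ at the moving point $y=\Phi(\Exp{tl},x)$, and then conclude that $h$ is constant. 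Your route buys two things: it needs only $C^{1}$ regularity of $\varphi$ rather than analyticity (no convergence of a series is invoked), and it makes explicit two steps that the paper leaves implicit --- that $G$-invariance of $D_\varphi$ is what keeps the flow line inside the domain, and that connectedness of $G$ (every element being a finite product of exponentials) is what upgrades constancy along one-parameter subgroups to constancy on full $G$-orbits. What the paper's approach buys is brevity and uniformity of method across Section \ref{s-general}, where the same expansion also proves the equivalence of the $r$-th order conditions in one stroke. Both proofs are complete for the statement as given.
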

\begin{proof} For $l\in\LL$ and for all $t,x$, it holds $\varphi(\Exp{t.lx})=\sum
_{r=0}^{+\infty}L_{lx}^{r}(\varphi)(x)\frac{t^{r}}{r!}$. Independence of both sides with respect to $t$ is equivalent to have $L_{lx}^{r}(\varphi)(x)=0$ for all $x\in D_\phi$ and $r\geq 1$, that is in turn equivalent to have $L_{lx}(\varphi)(x)=0$ for all $x\in D_\phi$.

It is easy to prove that $\phi,\psi$ radial implies both $\phi+\psi$ radial, by linearity of the derivative, and $\phi\psi$ radial, by the Leibnitz rule. Commutativity is standard.
\end{proof}

\subsection{Population Dynamics} We now focus on Population Dynamics, i.e. on dynamics of a set of interacting agents all moving on the same manifold $M$. All along the article, we will denote by $n$ the dimension of such manifold, while $N\in\mathbb{N}\backslash\{0\}$ will be
the number of agents in the population. The space $X$ on which the dynamics take place will then be an open dense analytic submanifold of $M^N$.

We first define the diagonal action, both of a Lie group and of its Lie algebra, as well as diagonal $G$-spaces.

\begin{definition}
\label{d-diagonal} Let $G$ be a connected Lie group acting on a manifold $M$
with action $\Phi$, and $\LL$ its Lie algebra. Given a number of
agents $N\in\mathbb{N}^{\ast}$, we define:

\begin{itemize}
\item the diagonal action of $G$ as the action of $G$ on $M^{N}$ given by
$$\Phi(g)(X_{1},\ldots,X_{N}):=(\Phi(g)X_{1},\ldots,\Phi(g)X_{N});$$

\item the diagonal extension of the infinitesimal action of $\LL$ as
follows: any $l\in\LL$ defines a vector field $l(X)\frac{\partial
}{\partial X}$ on $M $, and we consider the corresponding vector field on
$M^{N}$ given by $\hat{l}=l_{1}+\ldots+l_{n}$ with $l_{i}=l(X_{i}%
)\partial_{ X_{i}}$. The Lie algebra of such vector fields is
denoted by $\hat{\LL}$;
\item a diagonal $G$-space as a $G$-space  $D\subset M^{N}$ being invariant under the
diagonal action of $G$;

\item the ring of $\hat{\LL}$-jointly radial functions (eventually
$\hat{\LL}_{N}$-jointly radial functions if $N$ needs to be specified) is the Abelian ring of analytic functions $\varphi:D_\varphi\subset
M^{N}\rightarrow\mathbb{R}$, with open dense diagonal $G$-space $D_\varphi$ that are radial with respect to the diagonal action of $G$. Equivalently, by Proposition \ref{proprad}, jointly radial functions are solutions of
\begin{equation}
L_{\widehat{l}}\varphi=0\mbox{~~for all~~} \hat{l}\in\hat\LL.\label{e-jointlyradial}
\end{equation}

\end{itemize}
\end{definition}

We now define Population Dynamics (PD) and the corresponding Family of
Population Dynamics (FPD).

\begin{definition}
\label{d-PD}A \textit{Population Dynamics (PD for short)} is defined by the
elements $(N,M,G,D,F) $, where:

\begin{itemize}
\item $N\in\mathbb{N}\backslash\{0\}$ is the number of agents in the population;

\item $M$ is a differentiable manifold, and each agent $X_i$ with $i=1,\ldots,N$ evolves on it;

\item $G$ is a connected Lie group, acting on $M$ via an action $\Phi$;

\item $D\subset M^{N}$ is a diagonal $G$-space;

\item $F$ is an analytic vector field over $D$, that is $G$-equivariant with respect to the diagonal action of $G$, i.e. each
component $F_{i}$ of  the vector field $F$ (acting on the $i$-th agent)
satisfies
\begin{equation}
T\Phi(g)F_{i}(x_{1},\ldots,x_{N})=F_{i}(\Phi(g)x_{1},\ldots,\Phi(g)x_{N})\text{ for
all}\in G. \label{equivdef}%
\end{equation}

\item the domain of $F$ is maximal, i.e. there exists no vector field $\hat
{F}$ with a domain $\hat{D}\supset D$ being a diagonal $G$-space such that $F(x)=\hat{F}(x)$ for all $x\in
D$.
\end{itemize}

Given $(N,M,G)$ as above, the corresponding \textit{Family of Population
Dynamics} (FPD for short) is the largest family $\mathbb{F}$ of smooth vector
fields such that for all $F\in\mathbb{F}$ it holds:

\begin{itemize}
\item the domain of $F$ is a diagonal $G$-space $D_{F}\subset M^{N}$;

\item $(N,M,G,D_F,F)$ is a population dynamics.
\end{itemize}
\end{definition}

The main goal of the article is to characterize FPD's: given $(N,M,G)$ fixed,
we aim to compute and describe all the possible associated Population Dynamics. We start by characterizing them in terms of Lie algebras. This provides a Lie bracket condition for a vector field $F$ to be a
PD.

\begin{corollary}
\label{lemdef}Let $(N,M,G,D)$ be given as in Definition \ref{d-PD}, with $D$
connected. Let $F$ be a smooth vector field in ${\LARGE \chi}(D)$. Let
$\LL$ be the Lie algebra of $G$ and $\hat{\LL}$ its
diagonal extension. Then, $F$ is jointly equivariant with
respect to the action of $G$ if and only if%
\begin{equation}
[ F,\hat l]=0,\text{ for all }\hat l\in\hat{\LL}. \label{maineq}%
\end{equation}

\begin{remark} Even though we will mainly
deal with a whole domain $D=M^{N}$, the case of $D$ smaller is important too. The most relevant example is given by interaction based on potentials with singularities to prevent particle collisions. In this case, it holds $D=M^{N}\setminus\cup_{i\neq j}%
\{X_{i}=X_{j}\}$.
\end{remark}

As a consequence, the FPD associated with $(N,M,G)$ is the centralizer of
$\hat{\LL}$ in the space of vector fields defined on open dense diagonal $G$-spaces of $M^{N}$. In
particular it has the structure of a Lie Algebra.
\end{corollary}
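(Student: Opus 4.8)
The plan is to read this as an immediate corollary of Proposition \ref{equivp1}: the diagonal action turns the diagonal $G$-space $D$ into an ordinary $G$-space, so the equivalence between the flow-level equivariance \eqref{equiv1} and the infinitesimal bracket condition applies verbatim, once one checks that the componentwise definition of joint equivariance coincides with plain equivariance of $F$ as a single vector field, and that the Lie algebra of infinitesimal generators of the diagonal action is exactly $\hat\LL$.

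First I would unwind the notion of joint equivariance in \eqref{equivdef}. Since the diagonal action satisfies $\Phi(g)(X_1,\ldots,X_N)=(\Phi(g)X_1,\ldots,\Phi(g)X_N)$, its tangent map $T\Phi(g)$ is block-diagonal, acting on each tangent factor separately. Writing $F=(F_1,\ldots,F_N)$, the single equivariance identity $T\Phi(g)F(x)=F(\Phi(g)x)$ for $F$ as one vector field on $D$ is therefore equivalent, block by block, to the family \eqref{equivdef}. Hence $F$ is jointly equivariant exactly when $F$, viewed as an analytic vector field on the $G$-space $D$, is $G$-equivariant in the sense of \eqref{equiv1}. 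Next I would identify the generators: for $l\in\LL$ the vector field associated with $l$ on $M^N$ is $\frac{d}{dt}_{|t=0}\Phi(\Exp{tl})(X_1,\ldots,X_N)$, whose $i$-th component is $l(X_i)\partial_{X_i}=l_i$, so the sum is precisely $\hat l=l_1+\ldots+l_N\in\hat\LL$. Thus $\hat\LL$ is the image of $\LL$ under the infinitesimal diagonal action, and the condition $[F,l(x)]=0$ for all $l\in\LL$ furnished by Proposition \ref{equivp1} reads here as $[F,\hat l]=0$ for all $\hat l\in\hat\LL$, which is \eqref{maineq}. Applying Proposition \ref{equivp1} to the $G$-space $D$ then gives the stated equivalence.

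For the concluding assertion I would simply combine this with the definition of the FPD: by the equivalence just proved, the FPD is the set of analytic vector fields defined on open dense diagonal $G$-spaces that satisfy $[F,\hat l]=0$ for every $\hat l\in\hat\LL$, i.e. the centralizer of $\hat\LL$. That a centralizer is a Lie algebra is a standard consequence of the Jacobi identity: if $[F_1,\hat l]=[F_2,\hat l]=0$ then $[[F_1,F_2],\hat l]=[F_1,[F_2,\hat l]]-[F_2,[F_1,\hat l]]=0$, and closure under linear combinations is immediate, so $[F_1,F_2]$ again lies in the FPD.

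The genuine content has already been discharged in Proposition \ref{equivp1}; here I expect no real obstacle, and the only points requiring care are bookkeeping ones. The first is that Proposition \ref{equivp1} must be used in its general form, for a $G$-space $D$ inside an arbitrary product manifold $M^N$ rather than in the linear model used there \emph{for simplicity of exposition}; the connectedness of $D$ (and of $G$) assumed in the statement is exactly what makes the local-to-global passage in that proof legitimate. The second is that brackets of two FPD elements are only defined on the intersection of their domains, which is again an open dense diagonal $G$-space, so the Lie-algebra structure is unaffected.
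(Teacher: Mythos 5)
Your proposal is correct and follows exactly the paper's route: the paper's entire proof is ``Apply Proposition~\ref{equivp1} to the integral formulation~\eqref{equivdef}'', and you carry out precisely that application, merely making explicit the bookkeeping the paper leaves implicit (the block-diagonal structure of $T\Phi(g)$, the identification of the infinitesimal diagonal generators with $\hat{\LL}$, and the Jacobi-identity argument for the Lie algebra structure of the centralizer).
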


\begin{proof}Apply Proposition \ref{equivp1} to the integral formulation \eqref{equivdef}.
\end{proof}

We now aim to describe the components of FPD.
\begin{definition}
Denote by $\SS$ the space of solutions of FPD by components, i.e. the space of functions $f:M^{N-1}\to \chi(M)$ such that 
$$F(X_1,\ldots,X_N)=(F_1(\cdot,X_2,\ldots,X_N),F_2(X_1,\cdot,X_3,\ldots,X_N),\ldots,F_N(X_1,\ldots,X_{N-1},\cdot))$$ is a PD and $f=F_i$ for one of the components, considered as a vector field on $M$ for the $X_i$ variable, with $X_1,\ldots,X_{i-1},X_{i+1},\ldots,X_N$ seen as parameters.
\end{definition}

We have the following main result.
\begin{theorem}\label{t-FPD}
The FPD is a module over the ring of $\hat{\LL}$-jointly radial functions. The space of solutions $\SS$ is also a module over the same ring.

Given $M$ a manifold of dimension $n$ and $f^1,\ldots,f^n$ solutions, independent at each point of an open dense $G$-space, then the module is free and all solutions are of the form $\sum_{i=1}^n \phi^i f^i$ with $\phi^i$ jointly radial functions.
\end{theorem}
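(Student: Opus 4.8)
The plan is to treat the two module assertions as immediate consequences of the Leibniz identity for the Lie bracket, and then to extract the representation and freeness from pointwise linear independence via the same identity read off componentwise.

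First I would establish the $\RR$-module structure of the FPD. By Corollary~\ref{lemdef} a vector field $F$ lies in the FPD exactly when $[F,\hat l]=0$ for every $\hat l\in\hat{\LL}$. For a jointly radial $\phi\in\RR$ and $F$ in the FPD, the standard derivation identity $[\phi F,\hat l]=\phi[F,\hat l]-(L_{\hat l}\phi)F$ gives $[\phi F,\hat l]=0$, since $[F,\hat l]=0$ and $L_{\hat l}\phi=0$ by Proposition~\ref{proprad}. Closure under sums is the vector-space property already noted, and the remaining module axioms are the routine associativity and distributivity of multiplying a vector field by a function; the only bookkeeping is that the product lives on the intersection of the two open dense $G$-invariant domains, which is again an open dense diagonal $G$-space. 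The same computation, read off in the $i$-th slot, transfers the structure to $\SS$: writing $F=(F_1,\dots,F_N)$, a short coordinate computation shows that the $i$-th component of $[F,\hat l]$ depends only on $F_i$ (the term produced by differentiating $\hat l$ points in the $X_i$ direction and involves only $F_i$, while the transport term is the derivative of the components of $F_i$ along $\hat l$). Hence the equivariance condition decouples across agents, $\SS$ is precisely the set of components $f$ killed by the family of operators $\mathcal D_l f:=([F,\hat l])_i$, and $\mathcal D_l$ inherits the Leibniz rule $\mathcal D_l(\phi f)=\phi\,\mathcal D_l f-(L_{\hat l}\phi)f$. This makes $\SS$ an $\RR$-module and the componentwise projection an $\RR$-linear surjection from the FPD onto $\SS$.

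For the representation statement I would argue as follows. Since $\dim M=n$ and $f^1,\dots,f^n$ are linearly independent at each point of an open dense $G$-space, they form a frame of $TM$ in the $X_i$ variable there; thus any solution $f\in\SS$ can be written uniquely as $f=\sum_{j=1}^n\phi^j f^j$, and by Cramer's rule (using analyticity and the nonvanishing of the relevant determinant) the coefficients $\phi^j$ are analytic on that open dense set. To see that each $\phi^j$ is radial, apply $\mathcal D_l$ and use its Leibniz rule together with $\mathcal D_l f=0$ and $\mathcal D_l f^j=0$:
\[
0=\mathcal D_l f=\sum_{j=1}^n\bigl(\phi^j\,\mathcal D_l f^j-(L_{\hat l}\phi^j)f^j\bigr)=-\sum_{j=1}^n (L_{\hat l}\phi^j)\,f^j .
\]
Pointwise independence of the $f^j$ forces $L_{\hat l}\phi^j=0$ for every $\hat l\in\hat{\LL}$ and every $j$, i.e. $\phi^j\in\RR$ by Proposition~\ref{proprad}. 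Conversely, any combination $\sum\phi^j f^j$ with radial coefficients is a solution by the module property, and uniqueness of the frame expansion (equivalently, $\RR$-linear independence of the $f^j$, which again follows pointwise) shows the representation is unique; hence $\SS$ is free of rank $n$ with basis $f^1,\dots,f^n$.

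The step I expect to be the main obstacle is the componentwise reduction underlying $\mathcal D_l$: verifying cleanly that the $i$-th component of $[F,\hat l]$ depends only on $F_i$ and that the resulting operator satisfies the stated Leibniz rule, so that the global bracket condition genuinely decouples into one equation per agent. Once this is in place the remaining argument is purely linear-algebraic. The secondary technical points are analytic and domain-theoretic: ensuring the coefficients $\phi^j$ obtained by Cramer's rule are analytic and globally defined on an open dense $G$-space, and checking that intersections of such domains remain open dense diagonal $G$-spaces so that all objects stay in the same category; these follow from analyticity and the Baire property of manifolds, but should be stated explicitly.
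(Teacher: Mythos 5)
Your proposal is correct and follows essentially the same route as the paper: the Leibniz identity $[\phi F,\hat l]=\phi[F,\hat l]-(L_{\hat l}\phi)F$ for the module structure, the componentwise decoupling of the bracket condition (which the paper encodes in its component equation), and then expanding a solution in the frame $f^1,\ldots,f^n$, using the fact that the $f^j$ are solutions to kill one group of terms and pointwise independence to force $L_{\hat l}\phi^j=0$. Your explicit treatment of the decoupling, Cramer's-rule analyticity, and domain intersections merely makes precise steps the paper leaves implicit.
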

\begin{proof} We prove the first statement. Let $F$ be a PD and $\phi$ a jointly radial function. It holds
$$[\phi F, \hat{l}]=\phi [F,\hat{l}]-L_{\hat{l}} \phi F=\phi\cdot 0 - 0 \cdot F=0,$$
i.e. $\phi F$ is a PD. The proof of the second statement is a direct consequence: given $F=(F_1,\ldots,F_N)$ a PD and $f=F_i$ for some $i$ be a solution, then $\phi F$ with $\phi$ jointly radial is a PD and $\phi f=\phi F_i$ is a solution too.

We prove the last statement. Let $F=(F_1,\ldots,F_n)$ be a PD and $f=F_i$ for some $i$ be a solution. Since it is of the form $f:M^{N-1}\to \chi(M)$ and we have a family of $n$ independent $f^i:M^{N-1}\to \chi(M)$, we can always write in a unique way $f=\sum_{i=1}^n \phi^i f^i$ with $\phi^i:M^N\to \R$ being some functions. We need to prove that such $\phi^i$ are jointly radial. We write $[F,\hat l]=0$ and consider its $i$-th component, that reads as
\begin{eqnarray*}
0&=&\sum_{k=1}^N\partial_{X_k}\left(\sum_{j=1}^n\phi^j f^j\right). l(X_k)- \partial_{X_i}l .\left(\sum_{j=1}^n\phi^j f^j\right)=\nonumber\\
&=&\sum_{k=1}^N\sum_{j=1}^n(\partial_{X_k}(\phi^j) .l(X_k)) f^j +\sum_{j=1}^n \phi^j\left(\sum_{k=1}^N \partial_{X_k} f^j. l(X_k)-  \partial_{X_i}l.f^j
\right)\end{eqnarray*}
The last term is zero, since $f^j$ are solutions. By independence of the $f^j$, the first term now becomes $\sum_{k=1}^N(\partial_{X_k}(\phi^j) .l(X_k))=0$, i.e. $\phi^j$ is jointly radial.
\end{proof}

From now on, we will consider solutions of \eqref{maineq} as  smooth vector fields $F\in{\LARGE \chi}(D_{F})$ defined
on a diagonal $G$-space $D_{F}\subset M^N$ that is open and dense. We also identify solutions of \eqref{maineq} if they coincide over an open dense connected diagonal $G$-space.\\


As recalled in the introduction, the case of $N$ agents that are
indistinguishable with respect to the dynamics is very important for modeling
of population dynamics. We call them ``Permutation-Equivariant Population
Dynamics'' from now on. In mathematical terms, this means that permutation of
agents $i,j$ induces the permutation of their trajectories and no variation in
the dynamics of other agents. The formal definition is given here.

\begin{definition} \label{d-PEPD}
We say that a Population Dynamics $(N,M,G,D,F)$ is Permutation-Equivariant
(PEPD for short) if for each $i\neq j\in\{1,\ldots,N\}$ and $k\neq i,j$ it holds
\begin{eqnarray}\label{e-FPEPD}
F_{i}(X_{1},\ldots,X_{i},\ldots,X_{j},\ldots,X_{n})  &  =F_{j}(X_{1},\ldots,X_{j}%
,\ldots,X_{i},\ldots,X_{n}),\label{e-PEPD}\\
F_{k}(X_{1},\ldots,X_{i},\ldots,X_{j},\ldots,X_{n})  &  =F_{k}(X_{1},\ldots,X_{j}%
,\ldots,X_{i},\ldots,X_{n}),\nonumber
\end{eqnarray}
Given $(N,M,G)$ as above, the corresponding Family of Permutation-Equivariant
Population Dynamics (FPEPD for short) is the largest family $\mathbb{F}$ of
smooth vector fields such that for all $F\in\mathbb{F}$ it holds:
\end{definition}

\begin{itemize}
\item the domain of $F$ is a diagonal $G$-space, also being invariant under permutations;

\item $(N,M,G,D,F)$ is a permutation-equivariant population dynamics.
\end{itemize}

Examples of Permutation-Equivariant Population Dynamics are ubiquitous in literature, see e.g. \cite{Un,Trois,Cinq,Sept,Huit,Neuf,CS,Vingt,Vingt5}. We will study examples of PEPDs
in the remaining of the article.\\

 We now define {\it Controlled Population Dynamics}, i.e. dynamics in which an
additional constraint on the dynamics is imposed. This is often the case of
control systems, in which the admissible vector fields are just a subset of
all vector fields on a manifold. As already stated, this is the case of
nonholonomic constraints in the context of mechanical engineering, robotics,
or human locomotion models \cite{Deux,Onze,HelbingLaumond}.

\begin{definition}
\label{d-CPD} We say that $(N,M,G,D,F,\Omega)$ is a \textit{controlled population dynamics} if:

\begin{itemize}
\item $(N,M,G,D,F)$ is a population dynamics;

\item $\Omega$ is a subset of the Lie Algebra of (open-densely defined) vector fields over $M$ and $F\in\Omega$.
\end{itemize}

Given $(N,M,G,\Omega)$ as above, the corresponding \textit{Family of
Controlled Population Dynamics} (controlled FPD for short) is the largest family
$\mathbb{F}$ of smooth vector fields such that for all $F\in\mathbb{F}$ it holds:

\begin{itemize}
\item the domain of $F$ is a diagonal $G$-space;

\item $(N,M,G,D,F)$ is a controlled population dynamics.
\end{itemize}
\end{definition}
\begin{remark}
\label{rem1} It is clear that a controlled FPD is not a Lie algebra in general. However, it is the case for a controlled FPD associated with $(N,M,G,\Omega)$ if the constraint $\Omega$ is a Lie algebra itself.
\end{remark}

\subsection{Families of Population Dynamics with linear group actions} \label{s-linear}

In this section, we study the algebraic condition \eqref{maineq} when $G$ is a linear group acting on $M=\mathbb{R}^{n}$ (or an open dense $G$-invariant subset of it). In this setting, the Lie algebra $\LL$ of $G$
is a Lie subalgebra of $\mathcal{M}_{n}$, the Lie algebra of $n\times n$
matrices. Given $l\in\LL$, we denote by $l.X$ the corresponding linear
vector field over $\mathbb{R}^{n}$. We then write $X=(X_{1},\ldots,X_{N})$ and
$F=(F_{1},\ldots,F_{n})$ with $F_{i}=f_{i}(X)\partial_{X_{i}} $ defined by $N$ functions $f_{i}:D_{F}\rightarrow\mathbb{R}^{n} $. Condition \eqref{maineq} is now restated as the following set of $n$ first order linear partial differential equations on $\mathbb{R}^{nN}$, parametrized by $l\in\LL$: each $f_1,\ldots, f_N$ is a solution $f$ of the following equation: 
\begin{equation}
\partial_{X_{1}}f(X).lX_{1}+\partial_{X_{2}}f(X).lX_{2}+\ldots+\partial
_{X_{N}}f(X).lX_{N}=l.f(X)\qquad\mbox{~for all }l\in\LL. \label{maineq1}
\end{equation}
Following the same requirements for $F$, we only consider smooth solutions $(f_1,\ldots,f_N)$, and we identify two solutions if they coincide over an open dense diagonal $G$-space.

We now aim to describe the set of solutions of \eqref{maineq1}, that is the main objects under study in this article. We then define three important sets.

\begin{definition}
\label{maindef}Given $\mathcal{M}_{n} $ the Lie algebra of $n\times n$ matrices, let $\LL$ be a Lie subalgebra and $\Centr$ its centralizer. 

\begin{itemize}

\item We recall that $\RR$ is the ring of $\hat{\LL}$-jointly radial functions and $\SS$ is the $\RR$-module of solutions, that we identify with the set of functions $\varphi: D_\varphi\to\R^n$ that are solutions of \eqref{maineq1}. 

\item We denote by $\CC$ the set of solutions given by the centralizer $\Centr$, i.e. the $\RR$-module generated by functions
$$\left\{c .X_i\mbox{~~such that~~}i\in\{1,\ldots,N\}\mbox{ and }c \in\Centr\right\},$$
where we denote by $c.X_i$ the function $(X_1,\ldots,X_N)\mapsto c .X_i$.

\item We denote by $\EE$ the set of elementary solutions, i.e. the $\RR$-module generated by 
$\left\{X_1,\ldots, X_N\right\},$
where we denote by $X_i$ the function $(X_1,\ldots,X_N)\mapsto X_i$.

\end{itemize}
\end{definition}

We now state some useful properties for these sets.

\begin{proposition}
\label{mainprop} Let $\LL,\Centr,\RR,\SS,\CC,\EE$ be defined as in Definition \ref{maindef}. Then, it holds  $\EE\subset\CC\subset\SS$, i.e. elementary solutions and solutions given by the centralizer are actually solutions.
\end{proposition}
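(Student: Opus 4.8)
The plan is to exploit the fact that all three objects $\EE$, $\CC$, $\SS$ are $\RR$-modules, so that each inclusion reduces to checking it on a generating set. For $\EE\subset\CC$ I compare generators; for $\CC\subset\SS$ I verify that the generators of $\CC$ actually solve \eqref{maineq1} and then invoke the module property of $\SS$ established in Theorem \ref{t-FPD}.

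For the inclusion $\EE\subset\CC$, I would observe that the identity matrix $\Id$ belongs to the centralizer $\Centr$, since it commutes with every matrix and in particular with every $l\in\LL$. Consequently each generator $X_i$ of $\EE$ can be written as $\Id.X_i$, which is exactly a generator of $\CC$ (with $c=\Id\in\Centr$). As $\EE$ and $\CC$ are modules over the same ring $\RR$, the containment of generating sets yields $\EE\subset\CC$.

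For $\CC\subset\SS$, it suffices to verify that every generator $c.X_i$, with $c\in\Centr$, is a solution of \eqref{maineq1}; the full module $\CC$ then lies in $\SS$ because $\SS$ is itself an $\RR$-module by Theorem \ref{t-FPD}. Fixing $f(X)=c.X_i$, I note that $f$ depends linearly on $X_i$ alone, so its partial Jacobians are $\partial_{X_i}f=c$ and $\partial_{X_k}f=0$ for $k\neq i$. Substituting into the left-hand side of \eqref{maineq1} collapses the sum to the single surviving term $c.(l.X_i)$, while the right-hand side equals $l.(c.X_i)$. Equality for all $X_i$ is therefore equivalent to $c.l=l.c$, which holds precisely because $c\in\Centr$. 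This shows $c.X_i\in\SS$, hence $\CC\subset\SS$.

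The computation is routine; the only point requiring care is the bookkeeping of the partial differentials of $f(X)=c.X_i$ viewed as a map on $M^N$, together with the identification of the Jacobian of the linear map $X_i\mapsto c.X_i$ with the matrix $c$. I do not expect a genuine obstacle here, since the defining commutation relation of the centralizer is exactly the condition that \eqref{maineq1} imposes. The most delicate conceptual step is simply recalling that $\Id\in\Centr$, which is what exhibits the elementary solutions as a special case of the centralizer solutions and makes the chain $\EE\subset\CC\subset\SS$ transparent.
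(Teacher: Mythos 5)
Your proof is correct and follows essentially the same route as the paper: $\EE\subset\CC$ via $\Id\in\Centr$, and $\CC\subset\SS$ by checking the generators $c.X_i$ against \eqref{maineq1} — where the computation collapses to the commutation relation $c.l=l.c$ — and then invoking the $\RR$-module structure of $\SS$ from Theorem \ref{t-FPD}. The explicit Jacobian bookkeeping you carry out is exactly what the paper's rewriting of \eqref{maineq1} for $f=c.X_i$ does implicitly.
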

\begin{proof} The inclusion $\EE\subset \CC$ is a direct consequence of the fact that $\Id\in\Centr$. For $\CC\subset \SS$, fix $c\in \Centr$ and define $f=c.X_{i}$. Then, we rewrite \eqref{maineq1} for $f$ as $c.l.X_{i} -l.(c.X_{i})=0$ for all $l\in \LL$. This is exactly the condition for $c$ being in the centralizer of $\LL$ in $\mathcal{M}_{n}$, see Definition \ref{d-centralizer}. As a consequence, the generators of $\CC$ belong to $\SS$, thus the inclusion is a consequence of the fact that $\SS$ is a $\RR$-module.
\end{proof}

We recall that our goal is to describe all FPD associated to a given triple $(N,M,G)$, that is equivalent to describe the set $\SS$ of all solutions of \eqref{maineq1}. It is then very useful to study the case in which all solutions are reduced either to solutions given by the centralizer (i.e. $\CC=\SS$) or, even better, to elementary solutions (i.e. $\EE=\SS$). In both cases, the advantage is that the description of $\CC$ or $\EE$ is somehow explicit. The main result of this section, that we now state, describes some important cases in which one of the two identities holds.

\begin{theorem} \label{mainth} Let $G$ be a Lie group acting linearly on $M=\R^n$. Let $\LL,\Centr,\RR,\SS,\CC,\EE$ defined as in Definition \ref{maindef}. Then:

\begin{enumerate}[{\bf {I}tem 1.}]
\item If the space $\{c.X,c\in\Centr\}$ has rank $n$ at some point
$X\in\mathbb{R}^{n} $, then $\SS=\CC$ and it is a free $\RR$-module; moreover, given $\{c_1,\ldots,c_n\}$ independent generators of $\Centr$ and $i\in\{1,\ldots,N\}$ a fixed index, a basis of $\SS$ is given by $\{c_1X_i,\ldots,c_nX_i\}$.

\item If $N\geq n$, then  $\SS=\EE$ and it is a free $\RR$-module, with basis $\{X_{i_1},\ldots,X_{i_n}\}$ with $i_1,\ldots,i_n$ being any set of $n$ distinct indexes in $\{1,\ldots,N\}$.
\end{enumerate}
\end{theorem}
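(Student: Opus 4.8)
The plan is to obtain both items as essentially immediate corollaries of Theorem \ref{t-FPD}. In each case I would exhibit $n$ explicit solutions lying in $\CC$ (respectively $\EE$) that are pointwise linearly independent on an open dense diagonal $G$-space, and then invoke Theorem \ref{t-FPD} to conclude that these solutions form a basis of a free $\RR$-module equal to $\SS$. The inclusions $\EE\subset\CC\subset\SS$ from Proposition \ref{mainprop} then upgrade the resulting span to the full equality: since the exhibited basis already generates $\SS$ and sits inside $\CC$ (resp. $\EE$), which is itself contained in $\SS$, all these modules coincide.

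For Item 1, I would first use the rank hypothesis: if $\{c.X : c\in\Centr\}$ has rank $n$ at some $X\in\R^n$, I can pick $c_1,\ldots,c_n\in\Centr$ with $c_1.X,\ldots,c_n.X$ a basis of $\R^n$ (these $c_k$ are then automatically independent as matrices). Fixing an index $i$, the candidate basis is $\{c_1.X_i,\ldots,c_n.X_i\}$, which consists of solutions by Proposition \ref{mainprop}. Their independence at a point of $M^N$ depends only on $X_i$ and is governed by $\det[c_1.X_i \mid \cdots \mid c_n.X_i]$, a not-identically-zero analytic function of $X_i$; hence the independence locus is open and dense. The key remaining point is its $G$-invariance: since each $c_k$ centralizes $\LL$, it commutes with every $g\in G$ (as $G$ is connected, hence generated by $\Exp{\LL}$), so $c_k.(g.X_i)=g.(c_k.X_i)$, and invertibility of $g$ sends independent frames to independent frames. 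Thus the locus is an open dense diagonal $G$-space, Theorem \ref{t-FPD} applies, and I obtain freeness together with $\SS=\CC$.

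For Item 2, with $N\geq n$ I would pick any $n$ distinct indices $i_1,\ldots,i_n$ and take the elementary solutions $X_{i_1},\ldots,X_{i_n}$, which lie in $\SS$ since $\EE\subset\SS$. At a point their independence is measured by $\det[X_{i_1}\mid\cdots\mid X_{i_n}]$, a polynomial that is not identically zero (it equals $1$ at a configuration where the chosen blocks are the standard basis vectors, which is available because $N\geq n$), so the independence locus is again open and dense. Its $G$-invariance is immediate here, since $\det[g.X_{i_1}\mid\cdots\mid g.X_{i_n}]=\det(g)\,\det[X_{i_1}\mid\cdots\mid X_{i_n}]$ and $\det(g)\neq 0$. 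Theorem \ref{t-FPD} then yields that $\{X_{i_1},\ldots,X_{i_n}\}$ is a basis of a free module equal to $\SS$, whence $\SS=\EE$.

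The steps requiring the most care, and which I regard as the real content beyond the bookkeeping, are the two verifications that the independence locus is an open dense \emph{$G$-space}. Openness and density follow from analyticity (a not-identically-zero analytic function vanishes only on a nowhere dense set), but the $G$-invariance is where the structure genuinely enters: for Item 2 it is a one-line determinant identity using invertibility of the action, whereas for Item 1 it relies essentially on the defining property of the centralizer, namely that the $c_k$ commute with the full group action and hence transport independent frames to independent frames along orbits. Once these loci are identified, the reduction to Theorem \ref{t-FPD} is mechanical.
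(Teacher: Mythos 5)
Your proposal is correct and follows essentially the same route as the paper: exhibit the candidate solutions via Proposition \ref{mainprop}, identify the open dense locus of pointwise independence, and invoke Theorem \ref{t-FPD} to get freeness and the equality $\SS=\CC$ (resp. $\SS=\EE$). If anything, you are more careful than the paper, which asserts the independence locus is open and dense but leaves the verification that it is a diagonal $G$-space implicit; your commutation argument for the $c_k$ and the determinant identity for Item 2 supply exactly that missing detail.
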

\begin{remark} The choice of the basis in Item 2 has a direct impact on the ring $\RR$: its elements are functions whose domain is a $G$-space, that depends on the selection of the basis. In any case, the FPD is completely characterized on such domain.
\end{remark}
\begin{proof}
{\bf Item 1}. Let $c_{1},\ldots,c_{n}$ be independent generators of $\Centr$ such that
$\{c_{i}.X\}$ has rank $n$ at some $X$, then on an open set $\mathcal{O}\subset \R^n$, which is also
dense due to the algebraic character of linear dependence. Since $c_1\cdot X_i,\ldots,c_n \cdot X_i$ are all solutions due to Proposition \ref{mainprop} and are independent by hypothesis, we apply Theorem \ref{t-FPD} and get the result.

{\bf Item 2.} For simplicity of notation, we choose $\{i_1,\ldots,i_n\}$ to be the first $n$ indexes. Consider the set $\mathcal{O}\subset \R^{nN}$ of $X=(X_1,\ldots, X_n,X_{n+1},\ldots,X_N)$ such that $X_1,\ldots,X_n\in \R^n$ are $n$ independent vectors. It is clear that the set $\mathcal{O}$ is open dense. Restrict $F$ to $\mathcal{O}$ and, similarly to Item 1, observe that $X_1,\ldots,X_n$ are independent and are solutions. Apply again Theorem \ref{t-FPD} and get the result.
\end{proof}

\begin{remark}
We see that, for small $N $, it may happen that $\SS$ properly contains
$\mathcal{E}$. It is the case for instance if $N=1$ and $\Centr$ is not
 reduced to $\R.\Id $, which holds for instance when $G$ is the group
$SO(2,\mathbb{R})$ acting on the space $\R^2\setminus\{0\}$. See Remark \ref{r-proper-inclusion-1} below.
\end{remark}

\subsubsection{Time reparametrization}

We now discuss the problem of time reparametrization of PD. Indeed, it is clear that, if $F$ is a PD, then $\psi F$ is not a PD, in general. However, Proposition \ref{mainprop} implies that $\psi F$ is a PD as soon as $\psi$ is $\hat{\LL}$-jointly radial. This is particularly interesting for Euclidean and pseudo-Euclidean manifolds, that admit a natural arclength parametrization of trajectories.
\begin{corollary} \label{c-reparam}
Let $G$ be a linear group acting on $M=\R^n$. Let $F$ be a Population Dynamics, and $\psi$ be a $\hat{\LL}$-jointly radial function. Then $\psi F$ is a Population Dynamics.

Moreover, denote by $\|F\|$ the standard product Euclidean norm on $\R^{nN}$ and let $G$ be a group of linear isometries. Then, the functions $\phi=\|F\|^\alpha$ are jointly radial for any $\alpha\in\mathbb{Z}$. In particular, the vector field $\tilde F:=\frac{F}{\|F\|}$ is again a PD, outside equilibria of $F$. The trajectories $\tilde X(s)$ of $\tilde F$ (that geometrically coincide with trajectories $X(t)$ of $F$ outside equlibria) are parametrized by arclength $ds=\sqrt{ \sum_{i=1}^{N} ||F_{i}(X(t))||^{2}}dt$.

The same results hold when $M$ is a pseudo-Euclidean space and $G$ is a group of linear pseudo-isometries. Reparametrization of trajectories only holds if $\|F\|^2>0$. 
\end{corollary}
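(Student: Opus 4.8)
The plan is to reduce the whole statement to a single fact, namely that $\|F\|^{2}$ is $\hat\LL$-jointly radial, and then to read off every assertion from the module structure of the FPD proved in Theorem \ref{t-FPD}. The very first claim is in fact immediate: Theorem \ref{t-FPD} states that the FPD is a module over the ring $\RR$ of jointly radial functions, so if $F$ is a PD and $\psi\in\RR$, then $\psi F$ is again a PD. Concretely this is just the identity $[\psi F,\hat l]=\psi[F,\hat l]-(L_{\hat l}\psi)\,F=0$ already used in that proof.

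The core of the argument is therefore to show that $\|F\|^{2}$ is jointly radial. First I would use equivariance: in the linear setting $\Phi(g)=g$, so $T\Phi(g)=g$ and condition \eqref{equivdef} becomes $g.F_{i}(X)=F_{i}(g.X)$ for every $i$ and every $g\in G$. Next I would invoke the isometry hypothesis: since $g$ preserves the Euclidean inner product, $\|g.v\|=\|v\|$, whence $\|F_{i}(g.X)\|^{2}=\|g.F_{i}(X)\|^{2}=\|F_{i}(X)\|^{2}$. Summing over $i=1,\dots,N$ gives $\|F(g.X)\|^{2}=\|F(X)\|^{2}$, i.e. $\|F\|^{2}$ is constant on the orbits of the diagonal action, which is precisely the definition of a jointly radial function. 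Once this is established, for any $\alpha\in\mathbb{Z}$ the function $\|F\|^{\alpha}=(\|F\|^{2})^{\alpha/2}$ is an analytic function of the jointly radial function $\|F\|^{2}$ on the open dense $G$-space where $\|F\|^{2}>0$; since composing an analytic function with a function constant on orbits yields a function constant on orbits, $\|F\|^{\alpha}\in\RR$ there (for even $\alpha\geq 0$ this needs no restriction, by the ring structure of $\RR$). Applying the first claim with $\psi=\|F\|^{-1}$ then shows that $\tilde F=\|F\|^{-1}F$ is a PD outside the equilibria of $F$.

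For the arclength statement I would argue that, since $\tilde F=\psi F$ with $\psi=\|F\|^{-1}>0$ a positive scalar function, $\tilde F$ and $F$ share the same integral curves up to time reparametrization. Reparametrizing a trajectory $X(t)$ of $F$ by $s$ with $ds=\|F(X(t))\|\,dt=\sqrt{\sum_{i=1}^{N}\|F_{i}(X(t))\|^{2}}\,dt$ produces a curve $\tilde X(s)$ satisfying $\frac{d\tilde X}{ds}=F/\|F\|=\tilde F(\tilde X)$ and $\|\frac{d\tilde X}{ds}\|=1$, which is exactly the claimed unit-speed (arclength) parametrization.

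Finally, the pseudo-Euclidean case is handled by the same computation, now with $\|F\|^{2}=\sum_{i}\langle F_{i},F_{i}\rangle$ for the indefinite form and with $g$ a pseudo-isometry preserving it; the invariance $\|F(g.X)\|^{2}=\|F(X)\|^{2}$, and hence $\|F\|^{2k}\in\RR$ for all $k$, follow verbatim. The only genuine point of care — and the reason for the stated restriction — is analytic bookkeeping of the exponent: $\|F\|=\sqrt{\|F\|^{2}}$, and therefore $\tilde F$ together with the odd and negative powers are real-analytic only where $\|F\|^{2}>0$, so the normalization and the arclength reparametrization are available precisely on the $G$-space $\{\|F\|^{2}>0\}$. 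This domain issue is the main (and essentially only) obstacle; the algebra itself is an immediate consequence of equivariance combined with (pseudo-)isometry.
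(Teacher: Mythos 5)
Your proof is correct, and it takes a genuinely different route at the key step. The paper argues infinitesimally: it uses the fact that the Lie algebra of a group of linear isometries consists of skew-symmetric matrices $A$, and then computes the Lie derivative
$L_{\widehat{AX}}(\|F\|^2)=2\sum_{i=1}^N\langle A.F_i,F_i\rangle=0$,
where joint equivariance $[F,\widehat{AX}]=0$ is what allows the derivative of $F$ along $\widehat{AX}$ to be replaced by $(A.F_1,\ldots,A.F_N)$, and skew-symmetry kills each summand; this verifies the differential characterization of radiality from Proposition \ref{proprad}. You instead work at the group level: you use the integral form \eqref{equivdef} of equivariance, $g.F_i(X)=F_i(g.X)$, combine it with norm-invariance of isometries, and conclude that $\|F\|^2$ is constant on diagonal orbits, which is the very definition of a jointly radial function. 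The two routes are equivalent precisely by Proposition \ref{proprad}, but yours is more elementary (no need to identify the Lie algebra of the isometry group) and transfers verbatim to the pseudo-Euclidean case, where the paper's computation would need skew-adjointness with respect to the indefinite form rather than plain skew-symmetry. Conversely, the paper's argument stays within the differential machinery it has set up and is shorter, proving only $\alpha=2$ and delegating the rest. You also supply details the paper leaves implicit: the passage from $\alpha=2$ to general $\alpha\in\mathbb{Z}$ by composing with analytic functions on $\{\|F\|^2>0\}$, the explicit arclength reparametrization $ds=\|F(X(t))\|\,dt$ giving a unit-speed integral curve of $\tilde F$, and the domain bookkeeping explaining why odd and negative powers, hence the normalization itself, require $\|F\|^2>0$ — which is exactly the restriction stated for the pseudo-Euclidean case. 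Both proofs rely identically on Theorem \ref{t-FPD} for the first statement.
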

\begin{proof} The first statement is a consequence of the fact that $\SS$ is a $\RR$-module, proved in Theorem \ref{t-FPD}.

We prove the second statement for $\alpha=2$ only, the proof for other values being a direct consequence. Denote by $<.,.>$ the standard Euclidean scalar product on $\R^{nN}$. Since $G$ is a linear isometry, then its Lie algebra is given by elements of the form $AX$ with $A$ skew-symmetric. It then holds
$$L_{\widehat{AX}} (\|F\|^2)=L_{\widehat{AX}} (<F,F>)=2<L_{\widehat{AX}} F,F>=2\sum_{i=1}^N<A\cdot F_i,F_i>=0.$$\end{proof}

\subsubsection{Permutation-Equivariant Population Dynamics with linear group action} We now discuss the structure of Permutation-Equivariant Population Dynamics, when $G$ is a linear group acting on $M=\R^n$. By using the notation above, we write $X=(X_{1},\ldots,X_{N})$ and
$F=(F_{1},\ldots,F_{n})$ with $F_{i}=f_{i}(X)\partial_{ X_{i}} $ defined by $N$ functions $f_{i}:D_{F}\rightarrow\mathbb{R}^{n} $. It is clear that the vector field $F$ must be a PD, hence that \eqref{maineq} needs to be satisfied. Moreover, it is also clear that, as soon as one among the components $f_i$ is defined, all other components $f_j$ with $j\neq i$ are directly defined by \eqref{e-PEPD}. For a similar reason, the structure of $f_i$ needs to be permutation-equivariant with respect to indexes different from $i$, i.e.
\begin{equation}
f_i(X_1,\ldots,X_j,\ldots,X_k,\ldots,X_N)=f_i(X_1,\ldots,X_k,\ldots,X_j,\ldots,X_N)\hspace{3mm}\mbox{for all}\hspace{1mm}j\neq k\hspace{1mm}\mbox{with}\hspace{1mm}j,k\neq i. 
\label{e-PEdef}
\end{equation}

A direct consequence of this discussion is the following result.
\begin{corollary} Let $G$ be a linear Lie group on $M=\R^n$. Let $\LL,\Centr$ be defined as in Definition \ref{maindef}. Let the space $\{c.X,c\in\Centr\}$ have rank $n$ at some point, with generators $c_1X,\ldots,c_n X$.

Denote by $\check{X}_i:=(X_1,\ldots,X_{i-1},X_{i+1},\ldots,X_N)$, i.e. the configuration of all agents except $X_i$. Then, the Family of PEPD is given by $F=(F_{1},\ldots,F_{n})$ with $F_{i}=f_{i}(X)\partial_{X_{i}}$, where 
$$f_i(X_i,\check{X}_i)=c_1X_i\psi_1(X_i,\check{X}_i)+\ldots +c_nX_i\psi_n(X_i,\check{X}_i)$$
and the $\psi_k$ are analytic functions of $N$ variables, jointly radial on all variables and permutation invariant in the last $N-1$ ones.
\end{corollary}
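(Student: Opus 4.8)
The plan is to combine the free-module structure of solutions from Theorem~\ref{mainth} (Item 1) with the two permutation constraints \eqref{e-PEPD} and \eqref{e-PEdef} that define a PEPD. Throughout I would work on the open dense $G$-space where $\{c_1X,\ldots,c_nX\}$ has rank $n$, so that the representations below are unique by freeness of the module.

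First, each component $f_i$ is itself a solution of \eqref{maineq1}, hence an element of $\SS$. The rank hypothesis and Theorem~\ref{mainth} (Item 1) give $\SS=\CC$ as a free $\RR$-module which, for the \emph{same} index $i$, admits the basis $\{c_1X_i,\ldots,c_nX_i\}$; I would accordingly write, uniquely,
\[
f_i(X)=\sum_{k=1}^n \psi^i_k(X)\,c_kX_i,\qquad \psi^i_k\in\RR .
\]
The reason for matching the basis index to the component index is that each $c_kX_i$ depends only on $X_i$ and not on $\check X_i$. Consequently a permutation of two variables $X_m,X_p$ with $m,p\neq i$ fixes every basis vector, so condition \eqref{e-PEdef} for $f_i$, together with uniqueness of the representation, forces each coefficient $\psi^i_k$ to be invariant under permutations of $\check X_i$. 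Thus each $\psi^i_k$ is jointly radial and symmetric in its last $N-1$ entries.

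Next I would use the inter-component relation \eqref{e-PEPD} to identify these coefficient families across all components. Writing $\tau_{ij}$ for the configuration obtained by exchanging $X_i$ and $X_j$, condition \eqref{e-PEPD} reads $f_i(X)=f_j(\tau_{ij}X)$. Substituting the representations and using $(\tau_{ij}X)_j=X_i$ yields $\sum_k \psi^i_k(X)\,c_kX_i=\sum_k \psi^j_k(\tau_{ij}X)\,c_kX_i$, whence, by independence of the $c_kX_i$, one gets $\psi^i_k(X)=\psi^j_k(\tau_{ij}X)$ for every $k$. Setting $\psi_k:=\psi^1_k$ and specializing to $j=1$ gives $\psi^i_k(X)=\psi_k(\tau_{i1}X)$; reordering the last $N-1$ arguments via the symmetry already established then collapses this to $\psi^i_k(X)=\psi_k(X_i,\check X_i)$, which is exactly the claimed common form. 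I expect this matching step to be the main obstacle, since it is where one must check that a single function $\psi_k$ reproduces every component consistently; the involutivity $\tau_{ij}^2=\Id$ and the coefficient symmetry from the previous step are precisely what make it work.

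Finally, for the converse (needed because Definition~\ref{d-PEPD} asks for the largest such family) I would verify that any $F$ of the stated form is a PEPD: each component lies in $\SS$ because $c_kX_i\in\CC\subset\SS$ by Proposition~\ref{mainprop} and $\SS$ is an $\RR$-module by Theorem~\ref{t-FPD}, so $F$ is a PD; and \eqref{e-PEPD}, \eqref{e-PEdef} hold by construction from the common choice and the symmetry of the $\psi_k$. This closes the characterization.
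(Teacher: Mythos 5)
Your proposal is correct and takes essentially the same route as the paper, which presents this corollary as ``a direct consequence'' of the preceding discussion: each component $f_i$ is a solution in $\SS$, the rank hypothesis and Theorem~\ref{mainth}, Item~1 give the free-module basis $\{c_1X_i,\ldots,c_nX_i\}$ attached to the component's own index, and the permutation conditions \eqref{e-PEPD} and \eqref{e-PEdef} then force the coefficients to be a single jointly radial function, symmetric in the last $N-1$ arguments. Your write-up simply makes explicit the details the paper leaves implicit (uniqueness of coefficients by freeness, the cross-component matching via $\tau_{ij}$, and the converse verification).
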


\subsection{Translation equivariance in Euclidean and pseudo-Euclidean spaces} \label{s-translation}

In this section, we study two key aspects of the dynamics on Euclidean and pseudo-Euclidean spaces, in which the group $G$ of equivariance contains translations. The case is very relevant for models of population dynamics, as already described in the introduction. In this setting, we extend the previous results, moreover reducing the number of variables from $N$ to $N-1$.

We then assume to have a group $G=K\ltimes\mathbb{R}^{n}$ being the semi-direct product of a connected Lie group $K$ by the Euclidean space $M=\mathbb{R}^{n}$. We assume the following action: given $g=(M,b)$ and $x\in\R^n $, it holds $g.x:=Mx+b$. As a consequence, equivariance with respect to $G$ includes equivariance with respect to translations. This in turn forces all $F_{i}$ to be of the following form:
\begin{equation}
F_{i}=F_{i}(X_{1}-X_{j},\ldots,X_{N}-X_{j})\text{ for some }j. \label{semidvec}%
\end{equation}
This already has a simple consequence.
\begin{proposition} \label{p-1}
For $N=1$ (a single particle), the FPD associated to $(1,\mathbb{R}^{n},K\ltimes\mathbb{R}^{n})$ contains constant vector fields only (eventually being smaller).
\end{proposition}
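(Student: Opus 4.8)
The plan is to derive the conclusion directly from translation equivariance, which is already encoded in \eqref{semidvec}. Since $G=K\ltimes\mathbb{R}^{n}$ contains the full translation subgroup $\mathbb{R}^{n}$, any Population Dynamics $F$ must in particular be equivariant under translations, and \eqref{semidvec} is precisely the structural form that this forces: every component $F_{i}$ depends only on the differences $X_{1}-X_{j},\ldots,X_{N}-X_{j}$ for some fixed index $j$. I would then simply specialize to $N=1$: there is a single agent $X_{1}$, so the only difference available is $X_{1}-X_{1}=0$. Hence $F_{1}$ is a function of the constant $0$ alone, i.e. $F_{1}$ is a constant vector field on $\mathbb{R}^{n}$. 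This already shows that the FPD is contained in the space of constant vector fields.

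To make the argument self-contained at the infinitesimal level, I would alternatively invoke Corollary \ref{lemdef}. The Lie algebra $\LL$ of $G$ contains the generators of translations, which are the constant vector fields $v\,\partial_{x}$ with $v\in\mathbb{R}^{n}$. For $N=1$ the diagonal extension of such a generator is $\hat{l}=v\,\partial_{X_{1}}$, and the bracket condition \eqref{maineq} gives $[F,v\,\partial_{X_{1}}]=-(v\cdot\nabla)F=0$ for every $v\in\mathbb{R}^{n}$. Since $v$ is arbitrary, all first-order partial derivatives of $F$ vanish, so $F$ is constant. This recovers the same conclusion purely from the Lie-bracket characterization, and simultaneously confirms consistency between the integral form \eqref{semidvec} and its differential counterpart.

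For the clause \emph{``eventually being smaller''}, I would observe that a constant field $F\equiv c$ with $c\in\mathbb{R}^{n}$ must still satisfy the remaining equivariance under the $K$-part of the action. Writing \eqref{equivdef} for $g=(M,b)$ and $F_{1}\equiv c$ yields $M\,c=c$ for every $M\in K$, so the FPD coincides with the subspace of $K$-fixed vectors in $\mathbb{R}^{n}$. This may be a proper subspace of all constant vector fields, and can even reduce to $\{0\}$ when $K$ fixes no nonzero vector (for instance $K=SO(n)$ with $n\ge 2$), which justifies the parenthetical remark.

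The proof is essentially immediate once translation equivariance is invoked, so there is no substantial obstacle; the only care required is the correct sign in the bracket computation $[F,v\,\partial_{X_{1}}]=-(v\cdot\nabla)F$ and the routine passage between the global statement \eqref{semidvec} and the infinitesimal condition of Corollary \ref{lemdef}.
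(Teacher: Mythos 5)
Your core argument is exactly the paper's proof: translation equivariance in the form \eqref{semidvec} forces $F_{1}=F_{1}(X_{1}-X_{1})=F_{1}(0)$, i.e. a constant vector field. The supplementary material is also correct and consistent with the paper --- the bracket computation $[F,v\,\partial_{X_{1}}]=-(v\cdot\nabla)F=0$ is the legitimate infinitesimal counterpart via Corollary \ref{lemdef}, and your identification of the FPD with the $K$-fixed vectors $\{c:\ M c=c\ \text{for all}\ M\in K\}$ is precisely how the paper itself later sharpens this proposition in the $SO(2,\R)\ltimes\R^{2}$ and $SO(1,1)\ltimes\R^{2}$ cases, where the fixed subspace reduces to $\{0\}$.
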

\begin{proof} A Population Dynamics $F$ necessarily has to be independent on $X_1$, due to \eqref{semidvec}, i.e. constant.\end{proof}

From now on in this section, for simplicity in exposition, we chose $j=1$
in \eqref{semidvec}. We then define the difference variables with
respect to $X_{1}$, that are coordinates in $\R^{n(N-1)}$, as follows:%
\begin{equation}\label{e-Z}
Z=(Z_{2},\ldots,Z_{N}):=(X_{2}-X_{1},\ldots,X_{N}-X_{1}).
\end{equation}

We are now ready to restate condition \eqref{maineq} for vector fields $F$ being jointly equivariant with respect to the action of $K$ only, since equivariance with respect to translations is taken into account by \eqref{semidvec}. We again write $F=(F_{1},\ldots,F_{n})$ with $F_{i}=f_{i}(X)\partial_{X_{i}} $ defined by $N$ functions $f_{i}:D_{F}\rightarrow\mathbb{R}^{n} $. Since $K$ acts linearly on $\R^N$, then each $f_i$ solves \eqref{maineq1} for all $l\in\KK$, where $\KK$ is the Lie algebra of $K$. We now recall that $f=f(Z)$, hence it is easy to observe that $\partial_{X_1}f=-\sum_{i=2}^n \partial_{Z_i} f$, while $\partial_{X_j} f=\partial_{Z_j} f$ for $j\neq 1$. Then \eqref{maineq1} in this context reads as
\begin{equation*}
\left(-\sum_{i=2}^n\partial_{Z_{i}}f(Z).lX_{1}\right)+\partial_{Z_{2}}f(Z).lX_{2}+\ldots+\partial
_{Z_{N}}f(Z).lX_{N}=l.f(Z)\qquad\mbox{~for all }l\in\KK,
\end{equation*}
hence by linearity
\begin{equation}
\partial_{Z_{2}}f(Z).lZ_{2}+\ldots+\partial
_{Z_{N}}f(Z).lZ_{N}=l.f(Z)\qquad\mbox{~for all }l\in\KK.\label{e-PDEz}
\end{equation}
This equation has the same structure of \eqref{maineq1} with respect to the variables $Z_2,\ldots,Z_N$ and with Lie algebra $\KK$. Then, results of Section \ref{s-linear} can be translated to this context.

\begin{corollary} Time reparametrization in Euclidean and pseudo-Euclidean spaces described in Corollary \ref{c-reparam} holds if $G$ is an affine (pseudo-)isometry too.
\end{corollary}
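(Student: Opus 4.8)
The plan is to reduce everything to the first part of Corollary \ref{c-reparam}, namely that $\psi F$ is a PD whenever $F$ is a PD and $\psi$ is $\hat{\LL}$-jointly radial. Concretely, it suffices to prove that the (pseudo-)norm $\|F\|^2$ remains jointly radial with respect to the \emph{full} affine group $G=K\ltimes\R^n$; once this is established, $\|F\|^\alpha$ is jointly radial for every $\alpha\in\mathbb{Z}$, and the normalization $\tilde F=F/\|F\|$ together with the arclength identity $ds=\sqrt{\sum_i\|F_i\|^2}\,dt$ carries over verbatim from Corollary \ref{c-reparam}. So the only thing to check is joint radiality against the enlarged Lie algebra.

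To do so, I would split the Lie algebra $\LL$ of $G$ into its linear part $\KK$ (the Lie algebra of $K$) and its translation part $\R^n$, since it is enough to verify $L_{\hat l}(\|F\|^2)=0$ on generators of each summand. For $l\in\KK$ the argument is identical to the one already used in Corollary \ref{c-reparam}: writing $\|F\|^2=\sum_i\langle f_i,f_i\rangle$ and differentiating along $\widehat{lX}=\sum_j(lX_j)\partial_{X_j}$, the PD equation \eqref{maineq1} lets me replace $\sum_j\partial_{X_j}f_i\cdot lX_j$ by $l\cdot f_i$, so that $L_{\widehat{lX}}(\|F\|^2)=2\sum_i\langle l f_i,f_i\rangle$, and each term vanishes because $l$ is skew with respect to the (pseudo-)metric.

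The only genuinely new ingredient concerns the translation generators $\hat b=\sum_j b\,\partial_{X_j}$, $b\in\R^n$. Here I would invoke \eqref{semidvec}, which forces $F$, hence also $\|F\|^2$, to depend on the difference variables $Z=(X_2-X_1,\ldots,X_N-X_1)$ alone. Since $\sum_j\partial_{X_j}$ annihilates any function of $Z$ (one has $\partial_{X_1}f=-\sum_{k\geq 2}\partial_{Z_k}f$ and $\partial_{X_j}f=\partial_{Z_j}f$ for $j\geq 2$, as already noted before \eqref{e-PDEz}), we get $L_{\hat b}(\|F\|^2)=\sum_j b\cdot\partial_{X_j}(\|F\|^2)=0$ immediately. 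Combining the two cases shows $\|F\|^2$ is $\hat{\LL}$-jointly radial for $G$, which closes the argument.

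I do not expect a real obstacle: the linear-part computation is already contained in Corollary \ref{c-reparam}, and the translation part is automatic from the translation-invariance encoded in \eqref{semidvec}. The one point to state carefully is the pseudo-Euclidean case, where ``skew with respect to the metric'' means $l^{\top}J+J l=0$ for the metric tensor $J$, giving $\langle l f_i,f_i\rangle_J=\tfrac{1}{2}f_i^{\top}(l^{\top}J+J l)f_i=0$, and where the reparametrization is valid only on the region $\|F\|^2>0$, exactly as in Corollary \ref{c-reparam}.
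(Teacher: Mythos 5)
Your proof is correct and follows essentially the same route as the paper, which leaves this corollary as a ``direct translation'' of Corollary \ref{c-reparam} through the difference-variable reduction of Section \ref{s-translation}: your linear-part computation is exactly the skew-symmetry argument in the proof of Corollary \ref{c-reparam}, and your handling of the translation generators via \eqref{semidvec} is precisely what the passage to the variables $Z$ in \eqref{e-Z} and equation \eqref{e-PDEz} encodes. The only difference is presentational: you verify joint radiality of $\|F\|^2$ against the full affine algebra in the original coordinates, where the paper would invoke the already-established reduction to \eqref{e-PDEz} and apply Corollary \ref{c-reparam} verbatim.
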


\begin{corollary} \label{c-Z} Let $G=K\ltimes\mathbb{R}^{n}$ be the semi-direct product of a connected Lie group $K$ by the Euclidean space $M=\mathbb{R}^{n}$, acting as follows: given $g=(M,b)$ and $x\in\R^n $, it holds $g.x:=Mx+b$.  Let $\KK$ be the Lie algebra of $K$ and $\Centr$ its centralizer. Let $\RR$ be the ring of $\hat{\KK}$-jointly radial functions of $N-1$ variables $Z_2,\ldots,Z_N$, that is the Abelian ring of analytic functions $\varphi:D_\varphi\subset
\mathbb{R}^{n(N-1)}\rightarrow\mathbb{R}$, with open dense diagonal $K$-space $D_\varphi$ that are radial with respect to the diagonal action of $K$. Equivalently, by Proposition \ref{proprad}, jointly radial functions are solutions of
\begin{equation}
L_{\widehat{lZ}}\varphi=0\mbox{~~for all~~} \hat{l}\in\hat\KK.\label{e-jointlyradialZ}
\end{equation}

Denote by $\SS$ the set of functions $\varphi: D_\varphi\to\R^n$ that are solutions of \eqref{e-PDEz}. Denote by $\CC$ the set of solutions given by the centralizer $\Centr$, i.e. the $\RR$-module generated by functions
$$\left\{c .Z_i\mbox{~~such that~~}i\in\{2,\ldots,N\}\mbox{ and }c \in\Centr\right\}.$$ Denote by $\EE$ the set of elementary solutions, i.e. the $\RR$-module generated by $\left\{Z_2,\ldots, Z_N\right\}.$

Then: \begin{enumerate}[{\bf {I}tem 1.}]
\item If the space $\{c.Z,c\in\Centr\}$ has rank $n$ at a point
$Z\in\mathbb{R}^{n} $, then $\SS=\CC$ and it is a free $\RR$-module; moreover, given $\{c_1,\ldots,c_n\}$ independent generators of $\Centr$ and $i\in\{2,\ldots,N\}$ a fixed index, a basis of $\SS$ is given by $\{c_1Z_i,\ldots,c_nZ_i\}$.

\item If $N-1\geq n$, then  $\SS=\EE$ and it is a free $\RR$-module, with basis $\{Z_{i_1},\ldots,Z_{i_{n}}\}$ with $i_1,\ldots,i_{n}$ being any set of $n$ distinct indexes in $\{2,\ldots,N\}$.
\end{enumerate}
\end{corollary}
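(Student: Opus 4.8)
The plan is to exploit the observation, already made in the derivation of \eqref{e-PDEz}, that this equation has exactly the same structure as \eqref{maineq1}, with the $N$ variables $X_1,\ldots,X_N$ replaced by the $N-1$ difference variables $Z_2,\ldots,Z_N$ and the Lie algebra $\LL$ replaced by $\KK$. Since Theorem \ref{t-FPD}, Proposition \ref{mainprop} and Theorem \ref{mainth} were all proved for an arbitrary linear group action and an arbitrary number of variables, the entire machinery transfers verbatim to \eqref{e-PDEz}. Thus Corollary \ref{c-Z} is essentially Theorem \ref{mainth} read in the reduced coordinates, and the proof reduces to checking that each ingredient indeed applies after the substitution.

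First I would re-establish the analog of Proposition \ref{mainprop}, namely $\EE\subset\CC\subset\SS$ for the reduced system. The inclusion $\EE\subset\CC$ is immediate from $\Id\in\Centr$. For $\CC\subset\SS$, I would fix $c\in\Centr$ and $i\in\{2,\ldots,N\}$ and substitute $f=c.Z_i$ into \eqref{e-PDEz}: since $\partial_{Z_k}(c.Z_i)=c\,\delta_{ki}$, the left-hand side collapses to $c.l.Z_i$ and the condition becomes $c.l.Z_i=l.c.Z_i$ for all $l\in\KK$, which is precisely the statement that $c$ centralizes $\KK$. Because $\SS$ is an $\RR$-module, having all generators of $\CC$ inside $\SS$ yields the inclusion.

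With this in place, Item 1 follows by repeating the argument of Item 1 of Theorem \ref{mainth}: the rank condition on $\{c.Z\}$ at one point persists on an open dense set by the algebraic character of linear dependence, the candidate basis $c_1 Z_i,\ldots,c_n Z_i$ consists of independent solutions, and Theorem \ref{t-FPD} then delivers freeness together with the identity $\SS=\CC$. Item 2 follows analogously from Item 2 of Theorem \ref{mainth}, the only change being that the relevant open dense set is now $\{Z:\ Z_{i_1},\ldots,Z_{i_n}\text{ independent}\}$, which is nonempty precisely when the number $N-1$ of available difference variables is at least $n$; the $n$ elementary solutions $Z_{i_1},\ldots,Z_{i_n}$ are then independent, and Theorem \ref{t-FPD} again gives the result.

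The hard part is not analytical but conceptual: one must be sure that the passage from the full condition \eqref{maineq1} on $N$ variables to the reduced condition \eqref{e-PDEz} on $N-1$ variables is faithful, i.e. that solutions $f(Z)$ of \eqref{e-PDEz} are in bijection with translation-equivariant population dynamics. This is guaranteed by \eqref{semidvec}, which forces every component to depend only on the differences, together with the change of variables $\partial_{X_1}f=-\sum_{i=2}^N\partial_{Z_i}f$ carried out just before the statement. Once this faithfulness is granted, the remaining work is purely the bookkeeping of the index shift (from $N$ to $N-1$ variables, with indices running from $2$ rather than $1$), and no new estimate or argument is needed.
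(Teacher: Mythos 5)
Your proposal is correct and follows exactly the route the paper takes: the paper's own proof is the single remark that ``in both cases, proofs are direct translations of the corresponding results,'' relying on the already-established fact that \eqref{e-PDEz} has the same structure as \eqref{maineq1} with variables $Z_2,\ldots,Z_N$ and Lie algebra $\KK$. Your write-up simply makes explicit the translation (the analog of Proposition \ref{mainprop}, the rank argument, and the application of Theorem \ref{t-FPD}) that the paper leaves implicit.
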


In both cases, proofs are direct translations of the corresponding results.

We also consider the case of Permutation-Equivariant Population Dynamics. We have the following result.

\begin{corollary}[Translation-equivariant PEPDs] \label{c-trasl-PEPD} Let $G=K\ltimes \R^n$ be as in Corollary \ref{c-Z}, with $\KK$ the Lie algebra of $K$ and $\Centr$ its centralizer. Let the space $\{cX,c\in\Centr\}$ have rank $n$ at some point
$X\in\mathbb{R}^{n}$,  with $c_1X,\ldots,c_nX$ being a basis. Define
$$\check{Z}_i:=(X_1-X_i,\ldots,X_{i-1}-X_i,X_{i+1}-X_i,\ldots,X_{N} - X_i).$$

Then, the FPEPD is given by $F=\sum_{i=1}^N H(\check{Z_i})\partial_{X_i}$, where  
$$H(V_2,\ldots,V_N)=c_1(V_2+\ldots+V_N)\phi_1(V_2,\ldots,V_N)+\ldots+ c_n(V_2+\ldots+V_N)\phi_n(V_2,\ldots,V_N)$$
with $\phi_1,\ldots,\phi_n$ being analytic, jointly radial and permutation invariant. The formula holds in the $G$-diagonal invariant set $\R^{nN}\setminus(\cup_{i=1}^N\{(X_1+\ldots+X_N)-NX_i= 0\})$.
\end{corollary}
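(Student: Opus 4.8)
The plan is to combine the translation reduction of Corollary \ref{c-Z} with the permutation structure, in the same spirit as the linear-action case. First I would record that a permutation-equivariant dynamics is completely determined by a single component: writing $F=\sum_{i=1}^N f_i(X)\partial_{X_i}$, the relations \eqref{e-PEPD} force $f_i(X)=H(\check{Z}_i)$ for one fixed function $H$ of $N-1$ vector arguments, and the invariance condition \eqref{e-PEdef}, transported to the difference variables, becomes precisely the requirement that $H(V_2,\ldots,V_N)$ be invariant under permutations of its arguments. Since the first component $f_1$ is a function of $Z=(Z_2,\ldots,Z_N)=\check{Z}_1$, the task reduces to describing all permutation-invariant solutions $H$ of \eqref{e-PDEz}.

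Next I would invoke Item 1 of Corollary \ref{c-Z}: under the rank hypothesis, $\SS$ is a free $\RR$-module admitting, for any fixed index $i_0$, the basis $\{c_1 Z_{i_0},\ldots,c_n Z_{i_0}\}$. This basis is not symmetric, so the key step is to replace it by a permutation-invariant one. Set $S:=Z_2+\ldots+Z_N$. Each $c_k S=\sum_{j=2}^N c_k Z_j$ is a finite sum of the generators $c_k Z_j$ of $\CC$, hence a solution by Proposition \ref{mainprop} and the module structure of Theorem \ref{t-FPD}. I would then check that $\{c_1 S,\ldots,c_n S\}$ is independent on an open dense set: the hypothesis that $\{c\,v:c\in\Centr\}$ has rank $n$ for some $v\in\R^n$, together with the algebraic (hence open-dense) character of the rank condition, guarantees that $\{c_k S\}$ is a basis of $\R^n$ for all $S$ outside a proper algebraic subset, in particular wherever $S\neq 0$. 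Applying the last part of Theorem \ref{t-FPD} with these $n$ independent solutions shows that every solution is written uniquely as $H=\sum_{k=1}^n \phi_k\, c_k S$ with each $\phi_k$ jointly radial.

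Finally I would impose permutation invariance. Because $S=V_2+\ldots+V_N$ is itself symmetric in $V_2,\ldots,V_N$ and the vectors $c_k S$ are independent, $H=\sum_k \phi_k\, c_k S$ is permutation invariant if and only if each coefficient $\phi_k$ is; this yields exactly the stated form. Evaluating $S$ in the $i$-th component gives $(X_1+\ldots+X_N)-N X_i$, which pins down the domain $\R^{nN}\setminus(\cup_{i=1}^N\{(X_1+\ldots+X_N)-NX_i=0\})$, and I would verify this set is invariant under the diagonal $G$-action, which is immediate since $(X_1+\ldots+X_N)-NX_i\mapsto M\big((X_1+\ldots+X_N)-NX_i\big)$ under $g=(M,b)$. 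The main obstacle I anticipate is precisely this basis change: one must confirm that passing from the canonical asymmetric basis of Corollary \ref{c-Z} to the symmetric generator $S$ again yields a basis on an open dense $G$-space, which is where the rank hypothesis and the exclusion of the locus $S=0$ are essential.
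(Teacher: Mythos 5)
Your proposal is correct and follows essentially the same route as the paper's proof: pass to the symmetric generator $S=V_2+\ldots+V_N$, note that $c_1S,\ldots,c_nS$ remain a basis outside the union of hyperplanes $\{(X_1+\ldots+X_N)-NX_i=0\}$, expand in this basis with jointly radial coefficients via Theorem \ref{t-FPD} (the paper cites the argument of Theorem \ref{mainth}, Item 1), transfer permutation invariance to the coefficients $\phi_k$, and recover the remaining components by permutation equivariance. The only differences are cosmetic --- you reduce to a single function $H$ at the start rather than at the end, and you spell out the ``only if'' direction of the invariance transfer slightly more explicitly than the paper does --- so there is nothing substantive to flag.
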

\begin{proof} If $\{cX,c\in\Centr\}$ has rank $n$ in a point, then it has rank $n$ in an open dense set, as in the proof of Theorem \ref{mainth}. By removing the union of hyperplanes $\cup_{i=1}^N\{(X_1+\ldots+X_N)-NX_i= 0\}$, we are left again with an open dense set $\mathcal{O}\subset \R^n$. Outside this set, for each $i=1,\ldots,N$ it holds that 
$$c_1(V_2+\ldots+V_N),\ldots,c_n(V_2+\ldots+V_N)$$
is a basis of $\R^n$ for any choice of $\check{Z}_i=(V_2,\ldots,V_N)$. By writing $F=\sum_{i=1}^N F_i(\check{Z_i})\partial_{X_i}$, it holds $$F_1(\check{Z}_1)=c_1(Z_{2,1}+\ldots+Z_{N,1})\phi_1(\check{Z}_1)+\ldots+c_n(Z_{2,1}+\ldots+Z_{N,1})\phi_n(\check{Z}_1),$$
for some functions $\phi_1,\ldots,\phi_N$, as in the proof of Theorem \ref{mainth}, Item 1, where $Z_{j,k}:=X_j-X_k$. Since the term $Z_{2,1}+\ldots+Z_{N,1}$ is permutation invariant, then $F_1$ is permutation invariant when the $\phi_i$ are permutation invariant too. This completely determines $H(\check{Z}_1):=F_1(\check{Z}_1)$. Then, $F_2,\ldots,F_N$ are completely determined by permutation equivariance, i.e. $F_i(\check{Z}_i)=H(\check{Z}_i)$.
\end{proof}
%
%
%
%
%

\subsection{Population Dynamics on Lie groups} \label{s-GM}

In this section, we describe Population Dynamics in the following interesting case: the Lie group $G$ is both the state space for each agent and the group of equivariance. In this case, the manifold is then $M=G^N$, where $N$ is the number of agents. We will provide an example in Section \ref{s-S1}, in which the group is $G=SO(2,\R)$ the group of rotations of the plane, that is identified with the circle $S^1$. The result will also play a role in studying the Constrained Population Dynamics of Section \ref{s-unicycle}.

We have the following description of Families of Population Dynamics.

\begin{proposition}[FPDs on Lie groups] \label{p-LieGroups} Let $G=M$ be a Lie group, acting on itself by left translation: $G\times M\to M$ maps $(g,h)$ to $gh$. Then, a function $\phi:G^N\to\R$ is jointly radial if and only if it satisfies 
$$\phi(g_{1},\ldots,g_{N})=\psi(g_{1}^{-1}g_{2},\ldots,g_{1}^{-1}g_{N})$$
for an analytic function $\psi:G^{N-1}\to \R$.

Consider the FPD associated to $(N,M,G)$. It is composed by vector fields $F=\sum_{i=1}^{N} F_i$, where 
$$F_{i}(g_{1},\ldots,g_{N})=\sum_{j=1}^{n} v_{j}(g_{i})\phi_{ij}(g_{1},\ldots,g_{N}),$$
where $v_1,\ldots,v_n$ is a basis of the Lie algebra of left-invariant vector fields of $G$ and functions $\phi_{ij}$ are jointly-radial.

\end{proposition}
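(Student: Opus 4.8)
The plan is to prove the two assertions in order: first the explicit form of jointly radial functions, then the structure of the components of a PD via Theorem~\ref{t-FPD}, the left-invariant fields supplying the required independent family of solutions.

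For the characterization of jointly radial functions I would argue directly from the orbit structure of the diagonal left action, whose orbit through $(g_1,\ldots,g_N)$ is $\{(gg_1,\ldots,gg_N):g\in G\}$; a function is jointly radial exactly when it is constant on these orbits. Choosing the representative $g=g_1^{-1}$ carries $(g_1,\ldots,g_N)$ to $(e,g_1^{-1}g_2,\ldots,g_1^{-1}g_N)$, so setting $\psi(h_2,\ldots,h_N):=\phi(e,h_2,\ldots,h_N)$ forces $\phi(g_1,\ldots,g_N)=\psi(g_1^{-1}g_2,\ldots,g_1^{-1}g_N)$. Conversely, any $\phi$ of this shape is orbit-constant because $(gg_1)^{-1}(gg_k)=g_1^{-1}g_k$. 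Analyticity of $\psi$ holds since it is the restriction of the analytic $\phi$ to the analytic submanifold $\{g_1=e\}$, and the reconstruction is analytic because inversion and multiplication are analytic; the relevant domains are open dense diagonal $G$-spaces in the sense of Definition~\ref{d-diagonal}, which I would note but not belabor.

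For the structure of the FPD the key observation is that the infinitesimal generators of the left action are right-invariant vector fields on $G$: by definition $l(h)=\frac{d}{dt}_{|t=0}\Exp{tl}h$ is the right-invariant field with value $l$ at the identity. Since left-invariant and right-invariant vector fields commute on any Lie group, a basis $v_1,\ldots,v_n$ of left-invariant fields satisfies $[v_j,l]=0$ for all $l\in\LL$. Equivalently, taking $F=\sum_k v_j(g_k)\partial_{g_k}$ one verifies \eqref{equivdef} directly from $TL_g\,v_j(h)=v_j(gh)$, so each $v_j$, viewed as a constant element of $\SS$ (independent of the parameter variables), is a solution. Because left-invariant fields form a global frame, $v_1(g),\ldots,v_n(g)$ are independent at every point of $M=G$, so the independence hypothesis of Theorem~\ref{t-FPD} holds for this family.

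Applying Theorem~\ref{t-FPD} then makes $\SS$ a free $\RR$-module with basis $v_1,\ldots,v_n$, so every solution is $\sum_j \phi_j v_j$ with $\phi_j$ jointly radial. Since the equivariance condition \eqref{equivdef} decouples across components, each component $F_i$ of a PD is itself a solution, and expanding it in this basis gives $F_i(g_1,\ldots,g_N)=\sum_{j=1}^n v_j(g_i)\phi_{ij}(g_1,\ldots,g_N)$ with $\phi_{ij}$ jointly radial, as claimed; the jointly radial coefficients then inherit the form proved in the first part. The only genuinely substantive point is the commutation of left- and right-invariant fields (equivalently, the left-equivariance of left-invariant fields), which is the mechanism that produces $n$ independent solutions; once that is in place everything reduces to the module structure already established in Theorem~\ref{t-FPD}.
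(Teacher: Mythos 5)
Your proposal is correct and follows essentially the same route as the paper: the heart of both arguments is that the infinitesimal generators of the left action are right-invariant vector fields, which commute with the left-invariant frame $v_1,\ldots,v_n$ (Proposition \ref{p-LR}), giving $n$ everywhere-independent solutions so that Theorem \ref{t-FPD} applies. For the first statement you argue directly from the orbit structure where the paper simply cites Proposition \ref{proprad}, but this is only a cosmetic difference, since your orbit argument is exactly what that citation glosses over.
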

\begin{proof} The first statement is a direct consequence of Proposition \ref{proprad}, written with respect to the Lie Algebra $\hat \LL$ of the diagonal extension of left-invariant vector fields.

The second statement is a particular case of Theorem \ref{t-FPD}, after proving that each $F_i(g_1,\ldots,g_N):=v_j(g_i)$ for $j=1,\ldots,n$ is a solution. Since $G$ acts by left translations, its Lie algebra is given by right-invariant vector fields. Let $l$ be one of such right-invariant vector fields and define its diagonal extension $\hat l(g_1,\ldots,g_N)=\sum_k l(g_k)$, where $l(g_k)$ acts on the $k$-th agent. By studying the $i$-th component of $[F,\hat l]$, it holds  $[v_j(g_i),l(g_i)]=0$, since each left-invariant vector field $v_j$ commutes with right-invariant vector fields, see Proposition \ref{p-LR}. Thus, $F_i$ is a solution.\end{proof}

\subsection{Gradient flow Population Dynamics}

In this section, we describe Population Dynamics that are moreover gradient flows. This means that the dynamics $\dot X=F(X)$ is indeed of the form $\dot X=\nabla \phi(X)$ for some potential function $\phi:M^N\to \R$. It is clear that the standard setting for such dynamics is given by Riemannian manifolds, where the natural definition of the gradient is available.

The main result of this section is the following.
\begin{proposition} \label{p-GF} Let $M$ be a Riemannian manifold and $G$ a connected Lie group of isometries of $M$.

Let $\phi:M^N\to \R$ be a jointly-radial function. Then, its gradient is a jointly-equivariant vector field.
\end{proposition}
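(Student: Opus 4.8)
The plan is to reduce joint equivariance of $\nabla\phi$ to the Lie-bracket condition of Corollary \ref{lemdef}, namely to prove that $[\nabla\phi,\hat l]=0$ for every $\hat l\in\hat\LL$, where $\nabla$ is the gradient associated with the product Riemannian metric $g$ on $M^N$. Two structural observations set up the argument. First, since $G$ acts on $M$ by isometries, the diagonal action preserves the product metric $g$, so every generator $\hat l\in\hat\LL$ is a Killing vector field on $M^N$, i.e. $L_{\hat l}g=0$. Second, because $\phi$ is jointly radial, Proposition \ref{proprad} gives $L_{\hat l}\phi=0$ for all $\hat l\in\hat\LL$.

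The heart of the proof is the commutation identity $L_{\hat l}(\nabla\phi)=\nabla(L_{\hat l}\phi)$, which holds precisely because $\hat l$ is Killing. Indeed, writing the gradient as the metric dual $\nabla\phi=(d\phi)^\sharp$, the Lie derivative $L_{\hat l}$ is a derivation that commutes with the exterior differential, so $L_{\hat l}(d\phi)=d(L_{\hat l}\phi)$; moreover $L_{\hat l}g=0$ forces $L_{\hat l}$ to commute with the index-raising isomorphism $\sharp$. Hence $L_{\hat l}(\nabla\phi)=(d(L_{\hat l}\phi))^\sharp=\nabla(L_{\hat l}\phi)$. Plugging in $L_{\hat l}\phi=0$ yields $L_{\hat l}(\nabla\phi)=0$, and since $L_{\hat l}(\nabla\phi)=[\hat l,\nabla\phi]=-[\nabla\phi,\hat l]$, we obtain $[\nabla\phi,\hat l]=0$, which is exactly \eqref{maineq} for $F=\nabla\phi$.

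I expect the main obstacle to be establishing the commutation identity cleanly, precisely the step where the Killing hypothesis $L_{\hat l}g=0$ is used to push $L_{\hat l}$ through the musical isomorphism. A shorter, fully integral alternative avoids tensor calculus: the gradient is natural under isometries, meaning $T\Phi(g)\,\nabla\phi(x)=\nabla\bigl(\phi\circ\Phi(g)^{-1}\bigr)(\Phi(g)x)$ for every $g\in G$; since $\phi$ is $G$-invariant one has $\phi\circ\Phi(g)^{-1}=\phi$, so the right-hand side collapses to $\nabla\phi(\Phi(g)x)$, giving $T\Phi(g)\,\nabla\phi(x)=\nabla\phi(\Phi(g)x)$, which is the integral equivariance \eqref{equivdef} directly (and then Proposition \ref{equivp1} recovers the differential version). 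Establishing that naturality lemma, i.e. that $T\Phi(g)$ intertwines gradients because $\Phi(g)$ is an isometry, is the analogous crux of this second route.
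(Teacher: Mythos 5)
Your proposal is correct, and both of your routes reach the bracket condition \eqref{maineq} (or the integral condition \eqref{equivdef}) by a genuinely different technique than the paper, although all arguments rest on the same two ingredients: $\hat l$ is a Killing field of the product metric on $M^N$, and $L_{\hat l}\phi=0$.

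The paper's proof is a pointwise computation in normal coordinates: after reducing to $N=1$ (viewing $G^N$ acting on $M^N$, which is the same reduction you make implicitly by working with the product metric), it fixes a point $X$, expands the metric so that Christoffel symbols vanish at $X$, differentiates the radial identity $\sum_i(\partial_i \phi)\,l_i+o(\|x\|)=0$ to obtain \eqref{e-der2}, proves that the linearization $\partial_j l_i$ of the Killing field is skew-symmetric at the center (by differentiating the isometry property of the flow in $t$ and letting $x\to 0$), and finally recognizes the resulting expression as the coordinate formula for $[\nabla\phi,l]$ at $X$. Your Route 1 compresses all of this into the standard facts that $L_{\hat l}$ commutes with $d$ and, since $L_{\hat l}g=0$ forces $L_{\hat l}g^{-1}=0$, also with the musical isomorphism $\sharp$; Route 2 is shorter still, getting the pointwise equivariance \eqref{equivdef} directly from naturality of the gradient under isometries, with no need to pass through Proposition \ref{equivp1}. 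What your approach buys is brevity and transparency: no curvature expansion, no limiting argument, and the exact hypotheses used are laid bare. What the paper's computation buys, besides being elementary (coordinates and Taylor expansions only), is a reusable byproduct: the skew-symmetry of $\partial_j l_i$ at the center of normal coordinates is cited again in the proof of Corollary \ref{c-perp}, where it is needed to commute $\partial_{X_i}l$ with the symplectic matrix $J$ in dimension $2$. If the paper adopted your argument, that lemma on Killing fields would have to be isolated and proved separately to keep Corollary \ref{c-perp} intact.
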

\begin{proof} It is easy to observe that the result for $G$ acting diagonally on $M^N$ is equivalent to the case of $G':=G^N$ acting on $M':=M^N$. Then, from now on we only consider the case $N=1$, hence $\hat l=l$.

We need to prove that $[\nabla\phi, l]=0$ for all $l\in \LL$. We fix a point $X\in M$ and define normal coordinates around it: each point in a small neighborhood $\mathcal{N}_X$ of $X$ is written in coordinates $x=(x_1,\ldots, x_n)\in \R^n$ so that the Riemannian tensor $g$ at $x$ is $g_{ij}(x)=\delta_{ij}-\frac13 \sum_{kl}R_{ijkl}x_kx_l+o(\|x\|^2)$. In particular, we have that Christoffel symbols are zero. The inverse Riemannian tensor has a similar structure: $g^{ij}(x)=\delta^{ij}+o(\|x\|)$.

Write $\nabla f=\sum_i\partial_{i} f(x) g^{ij}(x)\partial_{i}$ and $l=\sum_i l_i(x)\partial_{i}$. Here, the symbol $\partial_i$ is the derivative with respect to the variable $x_i$ in normal coordinates. Since $f$ is radial, then $L_{l}f=0$, i.e. $g_x(\nabla f(x), l(x))=0$ for all $x\in M$ due to the definition of gradient. By writing the identity in normal coordinates, we have $\psi(x)=\sum_i (\partial_i f(x))l_i(x)+o(\|x\|)=0$. Since $\psi(x)=0$ for all $x\in M$, then its differential is zero. By computing it in coordinates, we have $\sum_{i} (\partial^2_{ji} f(x))l_i(x)+(\partial_{i} f(x))(\partial_{j}l_i(x))+o(1)=0$. By sending $x\to 0$, we have $o(1)=0$, thus for all indexes $j$ the following identity in $X$ holds:
\begin{equation}
[(\partial^2_{i j} f)  l_i + (\partial_i f) (\partial_{j} l_i)]_{|_{x=0}}=0.\label{e-der2}
\end{equation}

We now prove that the matrix $\partial_j l_i$ is skew-symmetric at $X$. We write $l(x)=L_0+L_1 x+o(\|x\|)$ in Taylor series, thus $L_1=\partial_j l_i$. Its exponential is then $\exp(t l(x))=L_0 t+ (\Id+L_1 t) x+o(x,t)$, thus by differentiation in $x$ we have $D\exp(t l(x))= (\Id+L_1 t)+o(1,t)$. The fact that $G$ is a group of isometry reads as 
$$g_x(v,w)=g_x(D\exp(t l(x)) v, D \exp(t l(x)) w).$$
By differentiating the identity with respect to $t$ at $t=0$, we have 
$$0=g_x(L_1 v,w)+g(x)( v,L_1w).$$
By letting $x\to 0$, the Riemannian tensor becomes the Euclidean one, thus the identity becomes
$$w^T L_1 v + v^T L_1 w=0,$$
i.e. $L_1=\partial_j l_i$ is skew-symmetric.

We now use the fact that $\partial_{j} l_i$ is skew-symmetric in \eqref{e-der2}. It then holds
$$[(\partial^2_{i j} f)  l_i - (\partial_{i} l_j) (\partial_i f)]_{|_{x=0}} =0,$$
for all indexes $j$. This is the formula for $[\nabla f, l]$ in coordinates at $X$. It then holds $[\nabla f, l]=0$ at $X$. Since the choice of $X$ was arbitrary, the identity holds for all $X\in M$.
\end{proof}
\begin{corollary} \label{c-perp} Let $M$ be a Riemannian manifold of dimension $n=2$ and $N\geq 1$. Let $F=(F_1,\ldots,F_N)=\nabla \phi$ be the gradient of a jointly radial function. Define $F^\perp:=((F_1)^\perp,\ldots,(F_N)^\perp)$, where $(F_i)^\perp$ is a continuous choice of perpendicular vectors to $F_i$ at each point of $M$. Then, $F^\perp$ is well defined and is a PD.
\end{corollary}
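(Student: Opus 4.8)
The plan is to verify that $F^\perp$ satisfies the defining equivariance \eqref{equivdef}, which by Corollary \ref{lemdef} is precisely the condition for it to be a PD. By Proposition \ref{p-GF} we already know that $F=\nabla\phi$ is a PD, i.e. each component obeys $T\Phi(g)F_i(X)=F_i(\Phi(g)X)$. Since $\dim M=2$, the metric together with a (local) orientation determines at each point $p$ the rotation-by-$90^\circ$ operator $J_p\colon T_pM\to T_pM$, characterized by $g_p(J_pv,v)=0$, $\|J_pv\|=\|v\|$, and $(v,J_pv)$ positively oriented; this $J$ is analytic and linear on each tangent space. The continuous choice of perpendicular vectors in the statement is precisely the choice of such a $J$ (equivalently, an orientation), and we may write $F^\perp_i=J\,F_i$ on each factor; in particular $F^\perp$ is analytic, well defined wherever $F$ is (globally if $M$ is orientable), and vanishes exactly at the equilibria of $F$. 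It then remains only to prove its equivariance.

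The heart of the argument is the elementary fact that in dimension two the $90^\circ$ rotation commutes with every orientation-preserving linear isometry: if $A\colon T_pM\to T_qM$ is such an isometry between oriented Euclidean planes, then $A\circ J_p=J_q\circ A$, since orientation-preserving isometries of the plane are rotations and rotations commute. Because $G$ is connected, every $\Phi(g)$ lies in the identity component of the isometry group, so its differential $T_p\Phi(g)$ is orientation-preserving at each point and therefore satisfies $T_p\Phi(g)\circ J_p=J_{\Phi(g)p}\circ T_p\Phi(g)$. Combining this with the equivariance of $F$ gives, componentwise,
\[
F^\perp_i(\Phi(g)X)=J\,F_i(\Phi(g)X)=J\,T\Phi(g)F_i(X)=T\Phi(g)\,J\,F_i(X)=T\Phi(g)\,F^\perp_i(X),
\]
which is exactly \eqref{equivdef} for $F^\perp$; hence $F^\perp$ is a PD. Equivalently, one may argue infinitesimally: $[F,\hat l]=0$ means $F$ is invariant under the isometric flow $\mathrm{Exp}(s\hat l)$, whose pushforward commutes with $J$, so $J\,F$ is invariant under the same flow, i.e. $[F^\perp,\hat l]=0$, which is \eqref{maineq}.

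The only delicate points are dimension-specific and global in nature, and I expect them to be the main obstacle rather than the equivariance computation itself. First, the construction uses crucially that $n=2$: only then is the orthogonal complement of $F_i$ one-dimensional, so that $(F_i)^\perp$ is determined up to sign, and only then is the orthogonal operator $J$ a genuine rotation that commutes with the isometric action. Second, the continuous (hence consistent) choice of perpendicular vectors amounts to a global orientation; if $M$ is orientable this fixes $J$ globally, whereas in the non-orientable case one must restrict to an orientable open set or pass to the orientation double cover. Once these are settled, the connectedness of $G$ forces orientation preservation and the commutation $T\Phi(g)\circ J=J\circ T\Phi(g)$ follows, so the transfer of equivariance from $F$ to $F^\perp$ is immediate.
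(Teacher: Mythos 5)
Your proof is correct, and it reaches the conclusion by a genuinely different (and more intrinsic) route than the paper. The paper argues infinitesimally in normal coordinates: it writes $F^\perp_i=JF_i$ with $J$ the constant symplectic matrix, recalls from the proof of Proposition \ref{p-GF} that the linearization $\partial_{X_i}l=L_1$ of a Killing field at the center of normal coordinates is skew-symmetric, observes that in dimension $2$ every skew-symmetric matrix is a multiple of $J$ and hence commutes with it, and then factors the bracket condition \eqref{e-perp} as $J\bigl([F,\hat l]\bigr)_i=0$, which vanishes by equivariance of $F$; the sign ambiguity is handled by allowing $\pm J$ on each connected component of the domain. You instead work at the group level with the integral formulation \eqref{equivdef}: you define $J$ intrinsically as the metric rotation-by-$90^\circ$ operator attached to an orientation, prove the key commutation $T\Phi(g)\circ J=J\circ T\Phi(g)$ from the elementary fact that orientation-preserving linear isometries of oriented Euclidean planes intertwine the rotation operators (orientation preservation being forced by connectedness of $G$), and then transfer equivariance from $F$ to $JF$ in one line; your closing infinitesimal variant (invariance under the isometric flow of $\hat l$) is the Lie-group counterpart of the paper's Lie-algebra computation. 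The two commutation facts are the integral and differential faces of the same statement, but the realizations differ: your argument is coordinate-free, avoids reusing the internal computation of Proposition \ref{p-GF}, and makes the orientability hypothesis behind the phrase ``continuous choice of perpendicular vectors'' explicit (global $J$ on orientable domains, restriction or double cover otherwise), whereas the paper's computation is self-contained at a point, works with $o(1)$ error terms in normal coordinates, and localizes the sign choice to connected components without discussing orientability. Both proofs use the gradient hypothesis only through Proposition \ref{p-GF}, i.e.\ only to know that $F$ is a PD, so your observation that equivariance is the sole input is faithful to the paper's logic.
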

\begin{proof} Assume that $F$ has a connected (open dense) domain. As in the previous proof, we use normal coordinates at each point of $(X_1,\ldots,X_N)\in M^N$, that coincide with the normal coordinates on each component. Write $X_i=(x_i,y_i)$ in normal coordinates, then $F(X)=\sum_{i=1}^N f_{i}\partial_{x_{i}}+g_{i}\partial_{y_{i}}$. We then define $F(X)^\perp=\sum_{i=1}^N -g_i\partial_{x_i}+f_i\partial_{y_i}+o(1)$, where we used the fact that the Riemannian metric in normal coordinates coincides with the Euclidean one up to order 2. In other terms, for each $F_i$ we choose $(F_i)^\perp=JF_i$ with $J$ being the symplectic matrix
\begin{equation}\label{e-J}
J=\left(\begin{array}{cc} 0 & -1 \\ 1 & 0 \end{array}\right).
\end{equation}
We then write the condition $[F^\perp,\hat l]=0$ for each coordinate, that reads as
\begin{equation}\label{e-perp}
(\partial_{X_1} JF_i) l(X_1)+\ldots+(\partial_{X_N} JF_i) l(X_N)-(\partial_{X_i}l) JF_i=0.
\end{equation}
By recalling that $\partial_{X_i}l=L_1$ with $L_1$ skew-symmetric (as proved in Proposition \ref{p-GF}), we have that it commutes with $J$, due to the fact that we deal with 2-dimensional skew-symmetric matrices; $L_1$ is indeed a multiple of $J$. We can then rewrite \eqref{e-perp} as $J \left([F,\hat l]\right)_i=0$, that is true due to equivariance of $F$.

More generally, for each connected component of the domain of $F$ and each coordinate $F_i$, we can choose $(F_i)^\perp$ as either $JF_i$ or $-JF_i$. The proof is identical, since the condition \eqref{e-perp} holds by replacing $J$ with $-J$.
\end{proof}

We will use these results throughout the remaining of the article, to identify some Population Dynamics that are gradient flows. It is remarkable to observe that it might happen that other Population Dynamics are gradient flows, even though they are not gradient of jointly radial functions. We also have the following useful corollary.

\begin{corollary} \label{c-distance} Let $M$ be a Riemannian manifold of dimension $n$ and $G$ a connected Lie group of isometries of $M$. If $N\geq n+1$, a basis of the space of solutions $\SS$ is given by the component of the vector fields $\nabla d(x_1,x_2),\ldots, \nabla d(x_1,x_{n+1})$ for the variable $x_1$, where $d$ is the Riemannian distance.
\end{corollary}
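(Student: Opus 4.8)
The plan is to realize each candidate vector field as the gradient of a jointly radial function, invoke Proposition \ref{p-GF} to conclude it is a PD, and then apply the freeness criterion of Theorem \ref{t-FPD} once pointwise independence is established. The hypothesis $N\geq n+1$ enters precisely because we need $n$ reference agents $x_2,\ldots,x_{n+1}$ besides $x_1$ in order to build $n$ solutions.

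First I would observe that for each $i$ the function $(x_1,\ldots,x_N)\mapsto d(x_1,x_i)$ is jointly radial. Indeed, since $G$ acts by isometries, $d(\Phi(g)x_1,\Phi(g)x_i)=d(x_1,x_i)$ for all $g\in G$, so the function is constant on diagonal $G$-orbits; it is analytic off the cut locus and the diagonal $\{x_1=x_i\}$, which is the open dense diagonal $G$-space on which we work. By Proposition \ref{p-GF} its gradient $\nabla d(x_1,x_i)$ is then jointly equivariant, hence a PD. Because this gradient is supported only in the $x_1$- and $x_i$-slots, its $x_1$-component $\nabla_{x_1} d(x_1,x_i)$ is a component of a PD and therefore an element of $\SS$, for each $i=2,\ldots,n+1$.

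Next I would prove that the $n$ solutions $\nabla_{x_1} d(x_1,x_2),\ldots,\nabla_{x_1} d(x_1,x_{n+1})$ are pointwise independent on an open dense $G$-space. The key Riemannian fact is that, wherever it is smooth, $\nabla_{x_1} d(x_1,x_i)$ is the unit tangent vector at $x_1$ to the minimizing geodesic joining $x_i$ to $x_1$ (unit norm, pointing away from $x_i$). Working in normal coordinates centered at $x_1$, where the metric at the origin is Euclidean, this vector equals $-v_i/\|v_i\|$ with $v_i$ the normal-coordinate image of $x_i$; hence independence of the $n$ gradients in $T_{x_1}M$ is equivalent to linear independence of $v_2,\ldots,v_{n+1}$ in $\R^n$. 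Choosing the $x_i$ so that these coordinate vectors point along $n$ independent directions exhibits one configuration where independence holds, and since the associated Gram determinant is a nonzero analytic function of the configuration, the locus of independence is open and dense. It is moreover $G$-invariant, the gradients transforming equivariantly under the isometric action.

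Finally, having $n$ solutions independent at each point of an open dense $G$-space, I would apply the last statement of Theorem \ref{t-FPD} to conclude that $\SS$ is a free $\RR$-module admitting these $n$ gradients as a basis. The main obstacle is the independence step: it relies both on the identification of $\nabla_{x_1} d(x_1,x_i)$ with the geodesic direction at $x_1$ and on the genericity argument guaranteeing that $n$ such directions span $T_{x_1}M$ for generic positions of the reference agents, which is exactly what forces $N\geq n+1$.
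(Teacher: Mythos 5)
Your proof is correct and takes essentially the same route as the paper's: joint radiality of the distance functions under the isometric action, Proposition \ref{p-GF} to conclude the gradients are jointly equivariant PDs, extraction of the $x_1$-components as solutions, and the freeness criterion of Theorem \ref{t-FPD}. The only difference is that you spell out the pointwise-independence step (geodesic directions in normal coordinates plus the genericity/analyticity argument), which the paper compresses into ``It is clear that the vector fields are independent.''
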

\begin{proof} It is clear that the vector fields are independent. Since they are gradient flows, they are jointly equivariant. Then, their first components are both independent functions and solutions. Since we have $n$ independent functions for the $n$-dimensional manifold, it is a basis due to Theorem \ref{t-FPD}.
\end{proof}

\section{Population Dynamics on the Euclidean plane}
\label{s-SE2}

In this section, we describe Population Dynamics on the Euclidean plane, being jointly equivariant with respect to rototraslations of the plane. We already stated in the introduction that this setting is often used for several important applications. We first describe the general FPD, by showing the role of each functional parameter. We finally describe gradient flows and Permutation-Equivariant PDs.

The setting of this section is then the following: the manifold is $M=\R^2$ and the group is the special Euclidean group of rototranslations $G=SO(2,\mathbb{R})\ltimes \R^2 $. We use standard coordinates on $\R^{2N}$, that we denote by  $X_{i}=(x_{i},y_{i})$ for $i=1,\ldots,N$, and denote the standard Euclidean norm by $\|\cdot\|$.

The case $N=1$ is easy but already interesting.
\begin{proposition} The FPD associated to $(N=1,\mathbb{R}^{2},SO(2,\R)\ltimes
\mathbb{R}^{2})$ contains the null vector field only.
\end{proposition}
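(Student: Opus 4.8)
The plan is to combine the translation-equivariance reduction of Proposition \ref{p-1} with the extra constraint imposed by the rotation subgroup $SO(2,\R)$.

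First I would apply Proposition \ref{p-1} with $K=SO(2,\R)$ and $n=2$. Since $G=SO(2,\R)\ltimes\R^2$ contains the full translation group $\R^2$, any Population Dynamics $F$ for $N=1$ must be a constant vector field, say $F(X_1)=v$ for a fixed $v\in\R^2$. This reduces the problem to deciding which constant fields are \emph{also} equivariant under rotations.

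Next I would impose equivariance with respect to $K=SO(2,\R)$, whose Lie algebra is spanned by the generator $l(X_1)=JX_1$ with $J$ as in \eqref{e-J}. Since $SO(2,\R)$ acts linearly on $\R^2$, the constant field $v$ must satisfy \eqref{maineq1} for $N=1$, namely $\partial_{X_1}v\cdot JX_1=J\cdot v$. The left-hand side vanishes because $v$ is constant, so the condition collapses to $Jv=0$. As $J$ is invertible, this forces $v=0$, hence the only Population Dynamics is the null field. Equivalently, at the integral level \eqref{equivdef} reads $R_\theta\,v=v$ for every planar rotation $R_\theta$, and no nonzero vector in $\R^2$ is fixed by all rotations.

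There is essentially no obstacle here: the statement is an immediate corollary of the translation-equivariance reduction together with the elementary fact that the standard $SO(2,\R)$-action on $\R^2$ has no nonzero fixed vectors. The only conceptual point worth flagging is the interaction of the two symmetry requirements — translation invariance forces the field to be constant, and rotation equivariance then forces that constant to be rotation-invariant, leaving only $v=0$.
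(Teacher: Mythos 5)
Your proof is correct and follows essentially the same route as the paper's: reduce to constant vector fields via Proposition \ref{p-1}, then use the bracket condition with the generator $JX_1$ of $so(2,\R)$ (equivalently $[Jx,v]=0$, i.e.\ $Jv=0$) to conclude $v=0$. The extra remarks about invertibility of $J$ and the integral-level fixed-vector argument are fine but add nothing beyond the paper's argument.
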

\begin{proof} By Proposition \ref{p-1}, PDs for $N=1$ are contained in the set of constant vector fields $f(x)=v\in\R^2$. By adding equivariance with respect to $SO(2)$, we impose $[Jx,v]=0$. This implies $v=0$.
\end{proof}

We then consider $N>1$ from now on. We introduce difference variables with respect to $X_1$, that we also write in polar coordinates, i.e. 
\begin{equation}\label{e-Zpolar}
Z_i:=X_i-X_1= \rho_i(\cos(\theta_i),\sin(\theta_i)),
\end{equation}
where $\rho_i:=\|X_i-X_1\|$ and $\theta_i\in[0,2\pi)$ is the argument of $Z_i$.

We have the following result.

\begin{proposition}\label{p-SE2} Consider the FPD associated to $(N,\mathbb{R}^{2},SO(2,\R)\ltimes
\mathbb{R}^{2})$ with $N>1$. Assume $(x_1-x_2,y_1-y_2)\neq (0,0)$ for simplicity of notation. Then, the FPD is composed of all vector fields of the form 
$$F=F_{1}%
+\ldots+F_{N}\qquad \mbox{~~with~~}\qquad F_{i}=f_{i,1}\partial_{x_{i}}+f_{i,2}%
\partial_{y_{i}},$$ and 
\begin{equation}\label{e-PD-SE2}
\left(
\begin{array}
[c]{c}%
f_{i,1}\\
f_{i,2}%
\end{array}
\right)  =\left(%
\begin{array}
[c]{cc}%
x_{2}-x_{1} & y_{1}-y_{2}\\
y_{2}-y_{1} & x_{2}-x_{1}%
\end{array}
\right)\binom{\lambda_{i}(\rho_{2},\ldots,\rho_{n},\theta_{3}-\theta_{2},\ldots,\theta
_{N}-\theta_{2})}{\mu_{i}(\rho_{2},\ldots,\rho_{n},\theta_{3}-\theta
_{2},\ldots,\theta_{N}-\theta_{2})},
\end{equation}
where functions $\lambda_{i},\mu_{i},$ are arbitrary analytic functions with a domain being a diagonal $SO(2,\R)$-space.
\end{proposition}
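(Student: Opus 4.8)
The plan is to apply the translation-reduction result, Corollary~\ref{c-Z}, with $K=SO(2,\R)$, and then read off the explicit form of the solutions. First I would pass to the difference variables $Z_i=X_i-X_1$ of \eqref{e-Z}, so that joint equivariance is encoded by \eqref{e-PDEz} with Lie algebra $\KK$ equal to the Lie algebra of $SO(2,\R)$. This Lie algebra is one-dimensional, spanned by the symplectic matrix $J$ of \eqref{e-J}. The centralizer $\Centr$ of $\KK$ in $\mathcal{M}_2$ is then the set of $2\times 2$ matrices commuting with $J$, namely $\Centr=\{aI+bJ : a,b\in\R\}=\mathrm{span}\{\Id,J\}$, which is two-dimensional.

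Next I would verify the rank hypothesis of Corollary~\ref{c-Z}, Item~1. Under the standing assumption $X_2\neq X_1$, i.e. $Z_2\neq 0$, the two vectors $\Id\,Z_2=Z_2$ and $J\,Z_2$ are orthogonal and nonzero, hence independent, so $\{c\,Z_2 : c\in\Centr\}$ has rank $n=2$. Corollary~\ref{c-Z}, Item~1 (with reference index $i=2$ and generators $c_1=\Id$, $c_2=J$) then gives $\SS=\CC$ as a free $\RR$-module with basis $\{Z_2,\,J\,Z_2\}$; since, by the decoupling of \eqref{e-PDEz} across components, each component of a PD is an arbitrary element of $\SS$, every component is $f_i=\lambda_i\,Z_2+\mu_i\,J\,Z_2$ for jointly radial coefficients $\lambda_i,\mu_i\in\RR$. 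It then only remains to identify the column vectors with the matrix of the statement: since $Z_2=(x_2-x_1,\,y_2-y_1)$ and $J\,Z_2=(y_1-y_2,\,x_2-x_1)$, the pair $(Z_2\mid J\,Z_2)$ is exactly the $2\times 2$ matrix appearing in \eqref{e-PD-SE2}, so $\binom{f_{i,1}}{f_{i,2}}$ takes the asserted form.

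The substantive step, which I expect to be the main obstacle, is to characterize the ring $\RR$ of $\hat\KK$-jointly radial functions, i.e. to justify that $\lambda_i,\mu_i$ may be taken as arbitrary analytic functions of precisely $\rho_2,\ldots,\rho_N,\theta_3-\theta_2,\ldots,\theta_N-\theta_2$. The diagonal $SO(2,\R)$-action sends $\theta_j\mapsto\theta_j+\alpha$ for all $j$ simultaneously while fixing every $\rho_j$, so by Proposition~\ref{proprad} a function is jointly radial exactly when, written in the polar coordinates \eqref{e-Zpolar}, it is invariant under this common angular shift. On the open dense diagonal $SO(2,\R)$-space where all $\rho_j\neq 0$ I would use $\theta_2$ to fix the frame: the functions $\rho_2,\ldots,\rho_N$ together with the differences $\theta_3-\theta_2,\ldots,\theta_N-\theta_2$ are invariant, are $2N-3$ in number (matching the dimension of the generic orbit space, since $\dim\R^{2(N-1)}=2N-2$ and the orbits are one-dimensional), and separate orbits there, because fixing $\theta_2$ reconstructs a unique representative; every other invariant angular difference $\theta_i-\theta_j$ is the combination $(\theta_i-\theta_2)-(\theta_j-\theta_2)$. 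Hence any analytic jointly radial function factors analytically through this complete set of invariants, which yields the stated dependence of $\lambda_i,\mu_i$ and completes the proof.
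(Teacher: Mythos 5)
Your proposal is correct and follows essentially the same route as the paper's proof: the paper likewise computes the centralizer $\Centr=\R.\Id+\R.J$ of $so(2,\R)$ and invokes Corollary~\ref{c-Z}, Item~1, to conclude. The only difference is that you spell out the details the paper leaves implicit (the rank verification at $Z_2\neq 0$, the identification of the matrix $(Z_2\mid JZ_2)$ with \eqref{e-PD-SE2}, and the polar-coordinate description of the ring of jointly radial functions), all of which are accurate.
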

\begin{proof}
The centralizer $\Centr$ of $SO(2,\R)$ is $\Centr=\R.\Id+\R. J$, where $J$ is the symplectic matrix \eqref{e-J}.

Since $\Centr$ has dimension 2, one can apply Corollary \ref{c-Z}, Item 1. This gives the result.
\end{proof}
\begin{remark}
\label{remmain} The number $2N$ of functional parameters here is minimal, as each $F_i\in\SS$ is independent and $\SS=\CC$ has dimension $2$. Moreover, it depends on $2N-3$ variables, that is the minimal number of variables too.
\end{remark}

In the formula above, it is clear that both $X_1$ and $X_2$ play specific (but different) roles. We then now give examples of dynamics for a system of $N=2$ particles only. In this case, we can describe the physical meaning of the functional parameters: $\lambda_i$ plays the role of the radial component of the interaction, while $\mu_i$ is the rotational component. In particular:
\begin{itemize}
\item $\lambda_1>0$ promotes convergence, and similarly for $\lambda_2<0$. Opposite signs promote distancing.
\item $\mu_1>0$ promotes clockwise rotation, and similarly for $\mu_2<0$. Opposite signs promote counter-clockwise rotation.
\end{itemize}
This is made clear from the following examples. Trajectories of these examples can be found in Figure \ref{f-A}. Agent 1 is depicted in green, Agent 2 is depicted in blue. The initial state is always $X_1=(0,0)$, $X_2=(1,1)$.

\begin{description}
\item[Example A.1] Set $\mu_1=\mu_2=0$. For $\lambda_1=1$ and $\lambda_2=-1$ we have convergence to a common state.
\item[Example A.2] Again with $\mu_1=\mu_2=0$, for $\lambda_1=1$ and $\lambda_2=0.5$, agent 1 aims to converge towards agent 2, that in turn aims to distance itself (with a smaller velocity).
\item[Example A.3] Again with $\mu_1=\mu_2=0$, for $\lambda_1=-1+\rho_2$ and $\lambda_2=1-\rho_2$, agents aim to converge towards a configuration with distance 1.
\item[Example A.4] Again with $\mu_1=\mu_2=0$, for $\lambda_1=-1+\rho_2$ and $\lambda_2=0.5-\rho_2$, agents aim to converge towards a configuration with different distances (1 and 0,5 respectively). Thus, agent 1 starts converging, then distances itself after reaching distance 1.
\item[Example A.5] Set $\lambda_1=\lambda_2=0$. For $\mu_1=1$ and $\mu_2=0$ we have clockwise rotation of agent 1 around agent 2.
\item[Example A.6] Set $\lambda_1=\lambda_2=0$. For $\mu_1=1$ and $\mu_2=1$ we have parallel displacement, as agent 1 moves clockwise and agent 2 moves counter-clockwise.
\item[Example A.7] Set $\lambda_1=\lambda_2=0$. For $\mu_1=1$ and $\mu_2=-1$ we have clockwise movement.
\item[Example A.8] For $\lambda_1=1$, $\lambda_2=-1$, $\mu_1=1$ and $\mu_2=-1$ we have clockwise movement, together with convergence.
\item[Example A.9] For $\lambda_1=1$, $\lambda_2=0$, $\mu_1=0$ and $\mu_2=1$ we have clockwise movement of agent 2 with respect to agent 1, together with convergence of agent 1 towards agent 2 via a radial movement.
\item[Example A.10] For $\lambda_1=-1+\rho_2$ and $\lambda_2=1-\rho_2$, with $\mu_1=1,\mu_2=-1$ agents aim to converge towards a configuration with distance 1, together with clockwise rotation.
\item[Example A.11] For $\lambda_1=-1+\rho_2$ and $\lambda_2=0.5-\rho_2$, with $\mu_1=1,\mu_2=-1$ agents aim to converge towards a configuration with different distances, together with clockwise rotation. The result is a convergence to a limit clockwise trajectory, with distance being the average of the desired distances.
\item[Example A.12] For  $\lambda_1=-0.5+\rho_2$ and $\lambda_2=0.5-\rho_2$, with $\mu_1=1-\rho_2,\mu_2=-1+\rho_2$ agents aim to converge towards a configuration with distance 0.5. This first induces counter-clockwise rotation, that then turns into clockwise rotation.
\end{description}

\begin{figure}[htb]
\centering
\begin{subfigure}{0.23\textwidth}
\includegraphics[width=\textwidth]{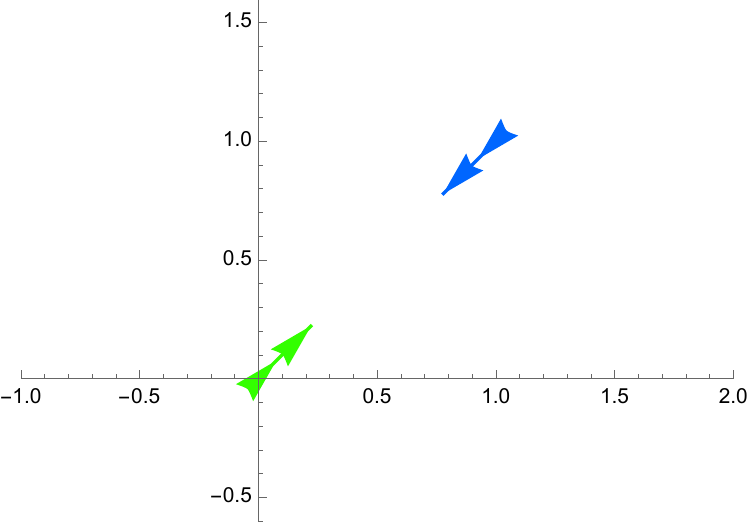}%
\caption{Example A.1: $\lambda_1=1,\lambda_2=-1$,\\ $\mu_1=\mu_2=0$.}
\end{subfigure}
\hfill
\begin{subfigure}{0.23\textwidth}
\includegraphics[width=\textwidth]{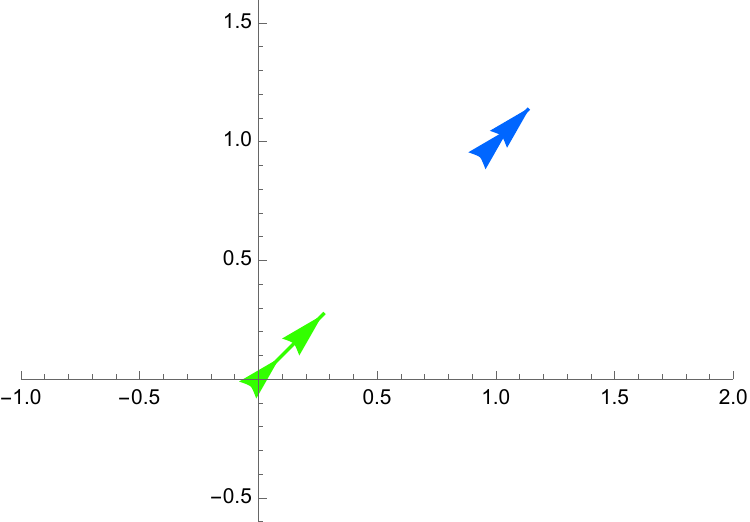}%
\caption{Example A.2: $\lambda_1=1,\lambda_2=0.5$,\\ $\mu_1=\mu_2=0$.}
\end{subfigure}
\hfill
\begin{subfigure}{0.23\textwidth}
\includegraphics[width=\textwidth]{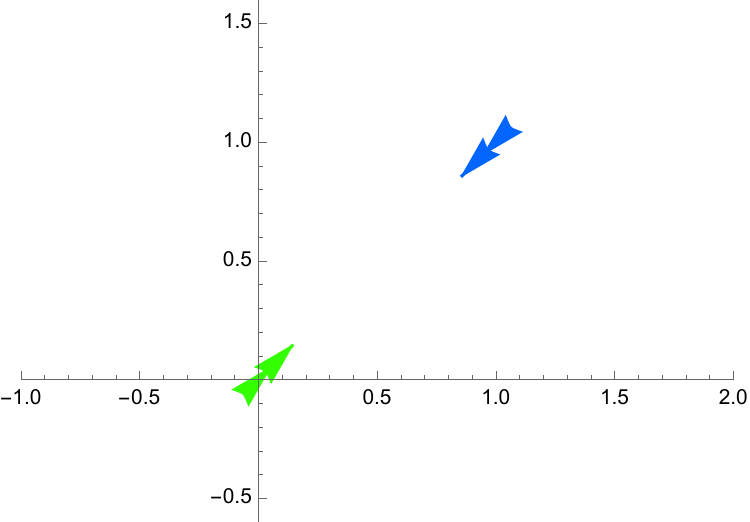}%
\caption{Ex. A.3: $\lambda_1=-1+\rho_2$,\\ $\lambda_2=1-\rho_2,\mu_1=\mu_2=0$.}

\end{subfigure}
\hfill
\begin{subfigure}{0.23\textwidth}
\includegraphics[width=\textwidth]{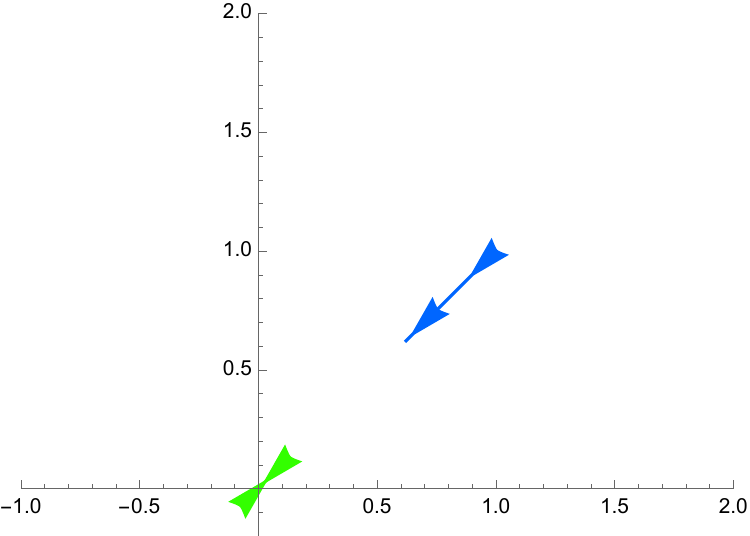}%
\caption{Example A.4: $\lambda_1=-1+\rho_2$,\\ $\lambda_2=0.5-\rho_2, \mu_1=\mu_2=0$.}

\end{subfigure}
\hfill
\begin{subfigure}{0.23\textwidth}
\includegraphics[width=\textwidth]{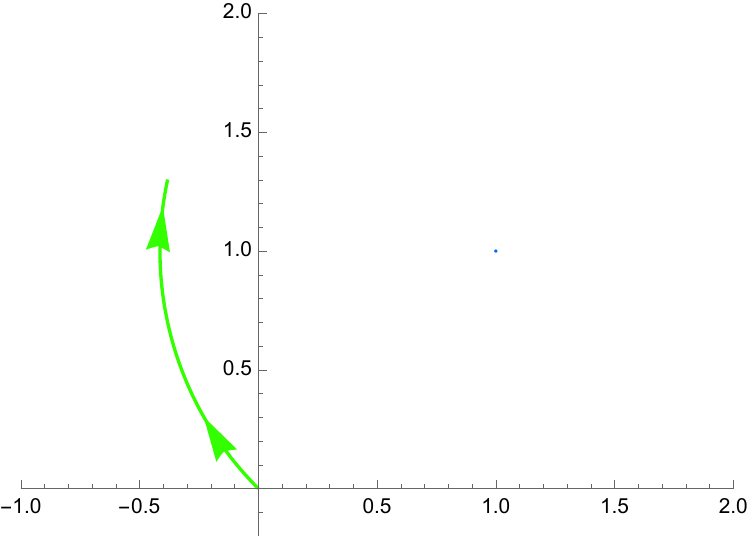}%
\caption{Example A.5: $\lambda_1=\lambda_2=0$,\\ $\mu_1=1,\mu_2=0$.}
\end{subfigure}
\hfill
\begin{subfigure}{0.23\textwidth}
\includegraphics[width=\textwidth]{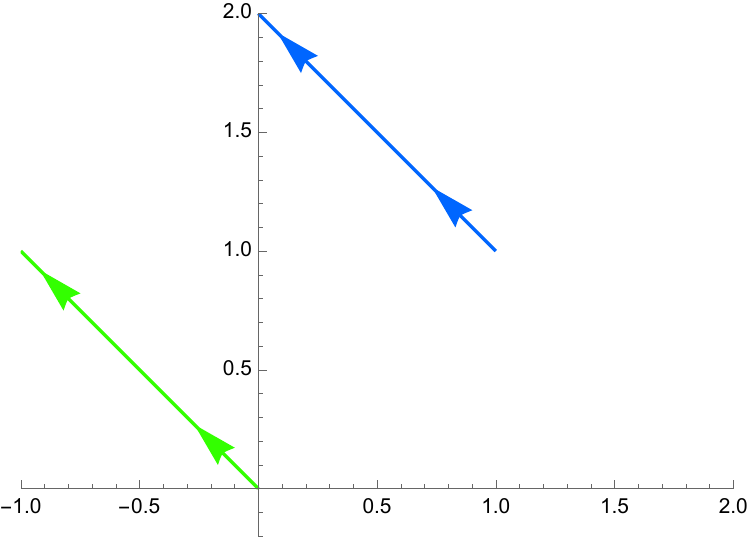}%
\caption{Example A.6: $\lambda_1=\lambda_2=0$,\\ $\mu_1=\mu_2=1$.}
\end{subfigure}
\hfill
\begin{subfigure}{0.23\textwidth}
\includegraphics[width=\textwidth]{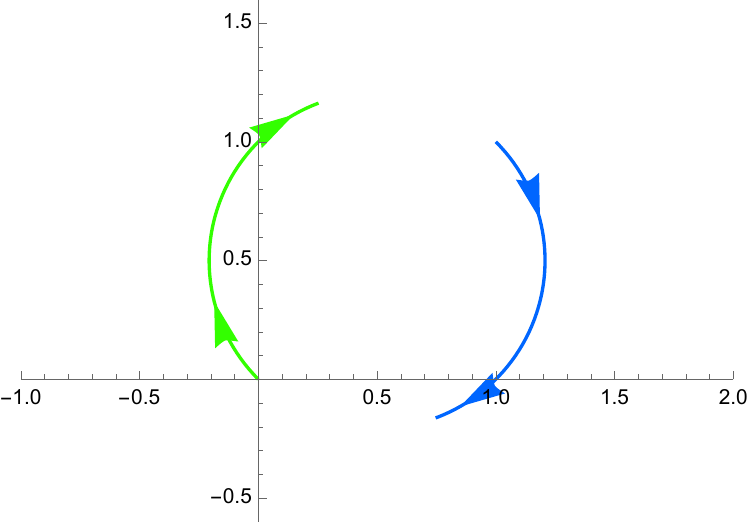}%
\caption{Example A.7: $\lambda_1=\lambda_2=0$,\\ $\mu_1=1,\mu_2=-1$.}
\end{subfigure}
\hfill
\begin{subfigure}{0.23\textwidth}
\includegraphics[width=\textwidth]{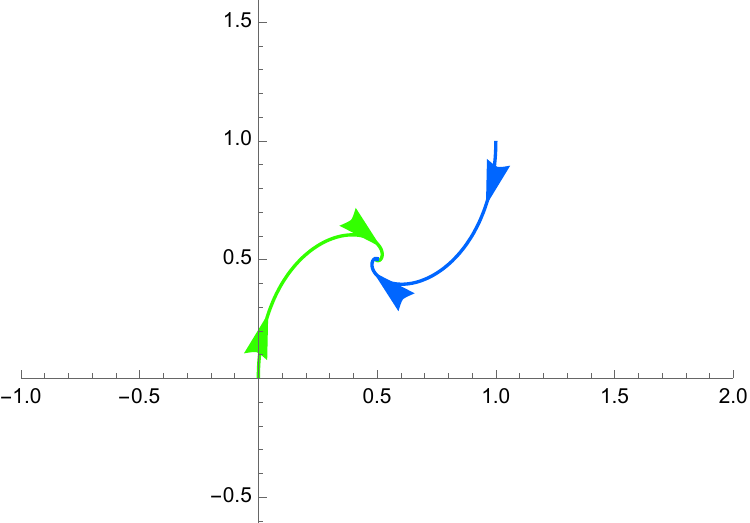}%
\caption{Example A.8: $\lambda_1=1,\lambda_2=-1$,\\ $\mu_1=1,\mu_2=-1$.}
\end{subfigure}
\hfill
\begin{subfigure}{0.23\textwidth}
\includegraphics[width=\textwidth]{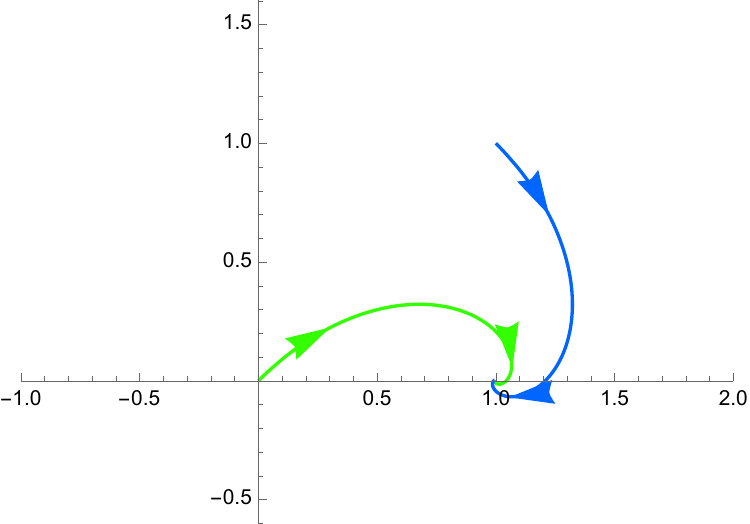}%
\caption{Example A.9: $\lambda_1=1,\lambda_2=0$,\\ $\mu_1=0,\mu_2=1$.}
\end{subfigure}
\hfill
\begin{subfigure}{0.23\textwidth}
\includegraphics[width=\textwidth]{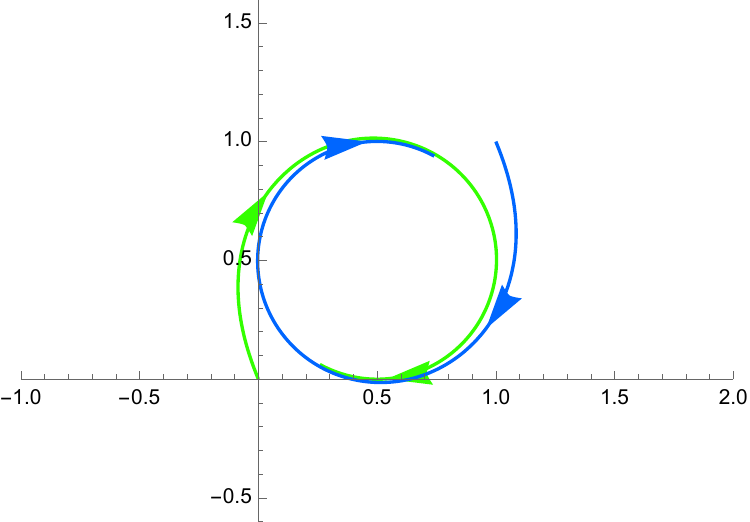}%
\caption{Ex. A.10: $\lambda_1=-1+\rho_2$,\\ $\lambda_2=1-\rho_2,\mu_1=1,\mu_2=-1$.}
\end{subfigure}
\hfill
\begin{subfigure}{0.23\textwidth}
\includegraphics[width=\textwidth]{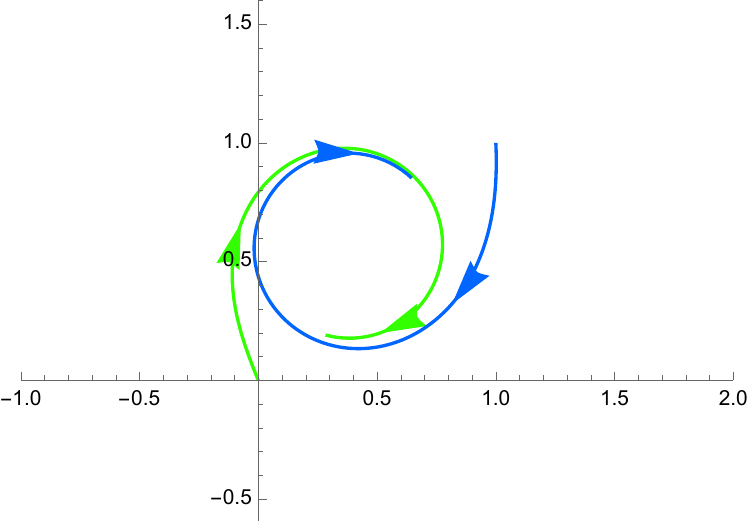}%
\caption{Ex. A.11: $\lambda_1=-1+\rho_2$,\\ $\lambda_2=0.5-\rho_2,\mu_1=1,\mu_2=-1$.}
\end{subfigure}
\hfill
\begin{subfigure}{0.23\textwidth}
\includegraphics[width=\textwidth]{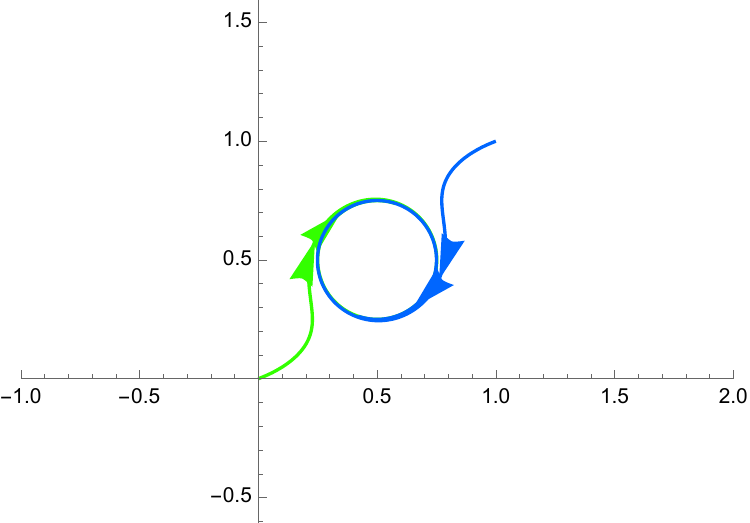}%
\caption{Ex. A.12: $\lambda_1=-0.5+\rho_2=-\lambda_2$, $\mu_1=1-\rho_2=-\mu_2$.}
\end{subfigure}
\hfill
\caption{Examples A (1 to 12).}
\label{f-A}
\end{figure}

\begin{remark} \label{r-Rn} The case of $M=\mathbb{R}^{n}$ and
$G=SO(n,\mathbb{R})\ltimes \R^n$ for $n>2$ is very different. Indeed, in this case, one has $\Centr=\R.\Id$, hence hypotheses of Corollary \ref{c-Z}, Item 1, are never satisfied. Then, one can resort to Corollary \ref{c-Z}, Item 2 if $N-1\geq n$ to describe FPD. As it has been already stated in the introduction, our results do not cover ``intermediate'' number of agents.
\end{remark}

\begin{remark} Contrarily to the case described in Remark \ref{r-Rn}, there is another interesting case in which assumptions of Corollary \ref{c-Z}, Item 1 hold: this is the group of unit quaternions $\mathcal{H}$ (or the semi-direct product $\mathcal{H}\ltimes \R^4$) acting on $\R^4$. Indeed, the Lie algebra $\mathcal{L}$ is generated by
the 3 quaternion matrices $i,j,k.$
\[
i=\left(
\begin{array}
[c]{cccc}%
0 & -1 & 0 & 0\\
1 & 0 & 0 & 0\\
0 & 0 & 0 & -1\\
0 & 0 & 1 & 0
\end{array}
\right)  ,\qquad
j=\left(
\begin{array}
[c]{cccc}%
0 & 0 & -1 & 0\\
0 & 0 & 0 & 1\\
1 & 0 & 0 & 0\\
0 & -1 & 0 & 0
\end{array}
\right)  ,\qquad
k=\left(
\begin{array}
[c]{cccc}%
0 & 0 & 0 & -1\\
0 & 0 & -1 & 0\\
0 & 1 & 0 & 0\\
1 & 0 & 0 & 0
\end{array}
\right) .
\]

The centralizer $\Centr$ is the full Lie algebra $\mathcal{H}^{\ast}$of
skew quaternions, $\mathcal{H}^{\ast}=\{a.Id+b.\hat{\imath}+c.\hat{\jmath
}+d.\hat{k}\}$ with 

\[
\hat{\imath}=\left(
\begin{array}
[c]{cccc}%
0 & -1 & 0 & 0\\
1 & 0 & 0 & 0\\
0 & 0 & 0 & 1\\
0 & 0 & -1 & 0
\end{array}
\right)  ,\qquad
\hat{\jmath}=\left(
\begin{array}
[c]{cccc}%
0 & 0 & 1 & 0\\
0 & 0 & 0 & 1\\
-1 & 0 & 0 & 0\\
0 & -1 & 0 & 0
\end{array}
\right)  ,\qquad
\hat{k}=\left(
\begin{array}
[c]{cccc}%
0 & 0 & 0 & -1\\
0 & 0 & 1 & 0\\
0 & -1 & 0 & 0\\
1 & 0 & 0 & 0
\end{array}
\right) .
\]

It is easy to check that the four vectors $x,\hat{\imath}.x,\hat{\jmath}.x,\hat
{k}.x$ are independent for all $x\in\R^4\setminus\{0\}$. Thus, Theorem \ref{mainth}, Item 1 or Corollary \ref{c-Z}, Item 1 can be used to characterize the FPD.
\end{remark}

\subsection{Gradient flows for 2 particles} In the case of $N=2$ particles, we are able to completely characterize the gradient flow PDs.
\begin{proposition}\label{p-GFSE2}
Let $F$ be a PD in the FPD associated to $(N=2,\mathbb{R}^{2},SO(2,\R)\ltimes
\mathbb{R}^{2})$, i.e. of the form \eqref{e-PD-SE2}. Then, it is a gradient flow if and only if $\lambda_1=-\lambda_2$ and $\mu_1=\mu_2=0$.
\end{proposition}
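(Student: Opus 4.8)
The plan is to regard $F$ as a $1$-form on (an open dense subset of) $\R^4$ via the Euclidean metric and to use that $F$ is a gradient flow if and only if this form is exact. Exactness is the conjunction of closedness (symmetry of the Jacobian $DF$) and the vanishing of all periods, and the whole subtlety of the statement lies in the fact that the natural domain $\R^4\setminus\{X_1=X_2\}$ is \emph{not} simply connected. Throughout I would use the difference coordinates $u=x_2-x_1$, $v=y_2-y_1$, $\rho=\sqrt{u^2+v^2}$; for $N=2$ the parameters $\lambda_i,\mu_i$ in \eqref{e-PD-SE2} depend on $\rho$ alone, and the components read $f_{i,1}=u\lambda_i-v\mu_i$, $f_{i,2}=v\lambda_i+u\mu_i$, with $\partial_{x_1}\rho=-u/\rho$, $\partial_{x_2}\rho=u/\rho$, $\partial_{y_1}\rho=-v/\rho$, $\partial_{y_2}\rho=v/\rho$.

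For the \emph{if} direction I would simply exhibit a potential. Setting $\phi=\Psi(\rho)$ with $\Psi'(\rho)=-\rho\,\lambda(\rho)$, where $\lambda:=\lambda_1=-\lambda_2$ and $\mu_i=0$, a direct chain-rule computation gives $\partial_{x_1}\phi=\Psi'(\rho)(-u/\rho)=u\lambda=f_{1,1}$, and likewise for the three remaining coordinates, so $\nabla\phi=F$. (Equivalently, $\rho$ is $SO(2,\R)\ltimes\R^2$-invariant, hence $\Psi(\rho)$ is jointly radial and Proposition \ref{p-GF} confirms its gradient is a PD.)

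For the \emph{only if} direction I would first extract the relation on the $\lambda_i$ from closedness alone. A gradient field has symmetric Jacobian, so in particular $\partial_{x_2}f_{1,1}=\partial_{x_1}f_{2,1}$. Computing both sides and evaluating them at a configuration with $u=0$ (i.e. $Z_2$ vertical) kills the $\rho$-derivative terms and leaves exactly $\lambda_1=-\lambda_2$, valid for every $\rho$ since the radius is free.

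The main obstacle is the condition $\mu_1=\mu_2=0$, and here closedness is genuinely insufficient: working out the remaining symmetry conditions only forces $\rho\mu_i'+2\mu_i=0$ together with $\mu_1+\mu_2=0$, i.e. $\mu_i=C_i/\rho^2$ with $C_1=-C_2$, which is analytic (hence an admissible PD) on the punctured domain. To finish I would invoke that a gradient has vanishing circulation along every loop, exploiting that $\R^4\setminus\{X_1=X_2\}$ is homotopy equivalent to $S^1$ and so is not simply connected. Taking the loop on which $X_1$ is fixed and $X_2$ traverses the circle of radius $r$ about it, one computes $\oint F\cdot dX=\int_0^{2\pi} r^2\mu_2(r)\,dt=2\pi r^2\mu_2(r)$, which must vanish, so $\mu_2\equiv0$; the symmetric loop with $X_2$ fixed and $X_1$ winding about it gives $\oint F\cdot dX=-2\pi r^2\mu_1(r)=0$, so $\mu_1\equiv0$. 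Combined with $\lambda_1=-\lambda_2$ this is exactly the claim. I expect this period computation — and the recognition that it, rather than closedness, is what forces the rotational terms to vanish — to be the crux of the argument.
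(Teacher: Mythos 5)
Your proof is correct, and it follows the same skeleton as the paper's: exhibit an explicit radial potential for the ``if'' direction, extract $\lambda_1=-\lambda_2$ from symmetry of the Jacobian, and then use the non-simply-connected topology of $\{X_1\neq X_2\}$ to kill the rotational terms. The difference lies in how the topological obstruction is executed. The paper first runs the closedness ODE to reduce to the one-parameter family $\mu_2=k/\rho_2^2=-\mu_1$, $\lambda_i=0$, then passes to the dual $1$-form, rewrites it in polar coordinates as $\omega=d\theta_2$, and asserts non-exactness of the angle form. You instead compute the circulation of $F$ along the two generating loops of the domain ($X_1$ fixed, $X_2$ on a circle of radius $r$, and vice versa), obtaining $2\pi r^2\mu_2(r)$ and $-2\pi r^2\mu_1(r)$; since a gradient has vanishing periods, this forces $\mu_1\equiv\mu_2\equiv 0$ directly, for \emph{arbitrary} $\mu_i$, so your ODE reduction $\rho\mu_i'+2\mu_i=0$ is an optional aside rather than a needed step. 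Your version is thus slightly more economical and makes quantitative the step the paper leaves as ``it is clear that this form is not exact'': your period $2\pi r^2\mu_2(r)$ evaluated on $\mu_2=k/\rho^2$ is exactly the period $2\pi k$ of $k\,d\theta_2$. A small bonus: your potential, with $\Psi'(\rho)=-\rho\lambda_1(\rho)=\rho\lambda_2(\rho)$, is the correct one; the paper's formula $\Phi(\rho)=\int_0^\rho\sqrt{s}\,\lambda_2(s)\,ds$ contains a typo (the integrand should be $s\,\lambda_2(s)$), as differentiation of the composite $\Phi(\rho(x))$ shows.
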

\begin{proof} It is easy to prove that both conditions $\lambda_1=-\lambda_2$ and $\mu_1=\mu_2=0$ imply that $F$ is a gradient flow. Indeed, define 
$$\phi(x_1,y_1,x_2,y_2):= \Phi(\sqrt{(x_1-x_2)^2+(y_1-y_2)^2})\mbox{~~with~~}\Phi(\rho)=\int_0^\rho \sqrt{s}\lambda_2(s)\,ds$$ and check that $F=\nabla \phi$.

We now prove the converse result. Let $\phi(x_1,x_2,x_2,y_2)$ be such that $\nabla \phi$ is a PD, i.e. of the form \eqref{e-PD-SE2}. A direct computation shows that, by imposing that $f_{1,1}=\partial_{x_1}\phi$ and $f_{2,1}=\partial_{x_2} \phi$, the condition $\partial_{x_2}f_{1,1}=\partial_{x_1}f_{2,1}$ ensures $\lambda_1=-\lambda_2$ and $\mu_1=-\mu_2$. We already know that the dynamics given by $\lambda_1=-\lambda_2$ with $\mu_1=\mu_2=0$ is a gradient flow, as proved above. By recalling that gradient flows form a vector space, we can then focus on the case $\lambda_1=\lambda_2=0$ and $\mu_1=-\mu_2$.

We now impose the condition $\partial_{y_2}f_{2,1}=\partial_{x_2}f_{2,2}$, that reads as
\begin{eqnarray*}
&& -\mu_2(\rho_2)+(y_1-y_2)\partial_{\rho_2} \mu_2 (\rho_2) \frac{y_2-y_1}{\rho_2}=\mu_2(\rho_2)+(x_2-x_1)\partial_{\rho_2} \mu_2 (\rho_2) \frac{x_2-x_1}{\rho_2},
\end{eqnarray*}
hence $2\mu_2(\rho_2)+\rho_2 \partial_{\rho_2}\mu_2(\rho_2)=0$. This implies $\mu_2(\rho_2)=\frac{k}{\rho_2^2}$. Remark that the domain is clearly given by $\rho_2\neq 0$, that is a $G$-invariant set. Again by recalling that gradient flows form a vector space, we see that we need only to check wether the following vector field is a gradient flow:

\begin{equation}\label{e-mu2}
\dot x_1=\frac{y_2-y_1}{\rho_2^2},\qquad \dot y_1=\frac{x_1-x_2}{\rho_2^2}, \qquad \dot x_2=\frac{y_1-y_2}{\rho_2^2}, \qquad \dot y_2=\frac{x_2-x_1}{\rho_2^2}.
\end{equation}

By duality in $\R^4$, it is a gradient flow if and only if the dual 1-form 
$$\omega=\frac{(y_1-y_2) d(x_2-x_1)+(x_2-x_1)d(y_2-y_1)}{\rho_2^2}$$
 is exact. By writing it in coordinates $(\rho_2,\theta_2,x_2,y_2)$ with $\rho_2(\cos(\theta_2),\sin(\theta_2))=(x_2-x_1,y_2-y_1)$, we have $\omega=d\theta_2$. It is clear that this form is not exact. Thus, the vector field \eqref{e-mu2} is not a gradient flow. This  in turn implies that a gradient flow that is a PD necessarily satisfies $\mu_1=\mu_2=0$.
\end{proof}
\begin{remark} The proof above shows that a complete characterization of gradient flow PDs is difficult, since one needs to check wether forms are exact on domains that are not simply connected. For this reason, we have no complete characterization for the case $N>2$, for which we only have the partial results given by Proposition \ref{p-GF}.
\end{remark}

\subsection{Permutation-equivariant population dynamics\label{homdep}} We now add permutation equivariance to the previous setting, by studying FPEPD introduced in Definition \ref{d-PEPD}. Since we already observed that the centralizer $\Centr$ of $SO(2,\R)$ has dimension $n=2$, we can apply Corollary \ref{c-trasl-PEPD} to compute the FPEPD in this case. We have the following result.
\begin{proposition} Consider the FPEPD associated to $(N,\mathbb{R}^{2},SO(2,\R)\ltimes
\mathbb{R}^{2})$ with $N>1$.

Consider the dynamics taking place on the open dense set 
\begin{eqnarray*}
\mathcal{O}&=&\Big\{(X_1,\ldots,X_N)\in\R^{2N} \mbox{~~such that~~}\sum_{j\neq i}(X_j- X_i) \neq (0,0)\mbox{~for all~}i=1,\ldots,N\Big\}.
\end{eqnarray*}
Write in polar coordinates $X_j-X_i=\rho_{j,i}(\cos(\theta_{j,i}),\sin(\theta_{j,i}))$ and define $\tau_{j,i}:=\theta_{j,i}-\theta_{i+1,i}$, with the convention that $i=N+1$ is replaced by $i=1$ and that the difference of angles is mod-$2\pi$.

Then, the FPEPD is composed of all vector fields of the form 
$$F=F_{1}%
+\ldots+F_{N}\qquad \mbox{~~with~~}\qquad F_{i}=f_{i,1}\partial_{x_{i}}+f_{i,2}%
\partial_{y_{i}},$$ and 
$$\left(
\begin{array}
[c]{c}%
f_{i,1}\\
f_{i,2}%
\end{array}
\right)  =\left(%
\begin{array}
[c]{cc}%
\sum_{j\neq i}(x_{j}- x_{i}) & -\sum_{j\neq i}(y_{j}-y_i)\\
\sum_{j\neq i}(y_{j}-y_{i}) & \sum_{j\neq i}(x_{j}-x_{i})%
\end{array}
\right)\binom{\lambda(\check{\rho}_i,\check{\tau_i})}{\mu(\check{\rho}_i,\check{\tau_i})},$$
where functions $\lambda,\mu$ are arbitrary analytic functions that are invariant with respect to permutations of both the first $N-1$ coordinates
$$\check{\rho}_i:=(\rho_{1,i},\ldots,\rho_{i-1,i},\rho_{i+1,i},\ldots,\rho_{N,i})$$
and the remaining $N-2$ coordinates 
$$\check{\tau}_i:=(\tau_{1,i},\ldots,\tau_{i-1,i},\tau_{i+2,i},\ldots,\tau_{N,i}).$$
\end{proposition}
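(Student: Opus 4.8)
The plan is to specialize Corollary \ref{c-trasl-PEPD} to the present triple $(N,\mathbb{R}^2,SO(2,\mathbb{R})\ltimes\mathbb{R}^2)$ and then to make the description of the single jointly-radial, permutation-invariant coefficient pair fully explicit. First I would recall from the proof of Proposition \ref{p-SE2} that the centralizer of $SO(2,\mathbb{R})$ in $\mathcal{M}_2$ is $\Centr=\mathbb{R}.\Id+\mathbb{R}.J$, with $J$ the symplectic matrix \eqref{e-J}. In particular $\dim\Centr=2=n$, and for every $X\neq 0$ the vectors $X$ and $JX$ are orthogonal, hence $\{c.X:c\in\Centr\}$ has rank $2$; thus the hypothesis of Corollary \ref{c-trasl-PEPD} holds and I may take the basis $c_1=\Id$, $c_2=J$.

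Next I would compute the matrix. Writing $W_i:=\sum_{j\neq i}(X_j-X_i)$, Corollary \ref{c-trasl-PEPD} yields $F=\sum_{i=1}^N H(\check{Z}_i)\partial_{X_i}$ with $H(\check{Z}_i)=c_1 W_i\,\lambda+c_2 W_i\,\mu=W_i\,\lambda+(JW_i)\,\mu$. Since $W_i=\big(\sum_{j\neq i}(x_j-x_i),\sum_{j\neq i}(y_j-y_i)\big)$ and $J(a,b)=(-b,a)$, this is exactly the stated product of the $2\times 2$ matrix with the column $(\lambda,\mu)^{T}$. Because permutation equivariance forces every component to be given by the \emph{same} function $H$, a single pair $(\lambda,\mu)$ parametrizes the whole FPEPD. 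The open dense set on which the construction is valid is obtained, as in the proof of Corollary \ref{c-trasl-PEPD}, by removing the loci where the matrix degenerates, i.e. where $W_i=(0,0)$ for some $i$; this is precisely $\mathcal{O}$, which is a diagonal $G$-invariant open dense subset of $\mathbb{R}^{2N}$.

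The remaining and most substantial step is to translate the abstract conditions \emph{jointly radial} and \emph{permutation invariant} on $(\lambda,\mu)$ into the concrete coordinate description. I would argue that $H$ depends on $\check{Z}_i$ only through the diagonal $SO(2)$-invariants of the family of difference vectors $\{X_j-X_i\}_{j\neq i}$; a complete, functionally independent set of such invariants on $\mathcal{O}$ is given by the $N-1$ moduli $\check{\rho}_i$ together with the $N-2$ relative arguments $\check{\tau}_i$ measured from the reference direction $\theta_{i+1,i}$ (with the cyclic convention $i=N+1\mapsto i=1$, so that $\tau_{i+1,i}\equiv 0$ is dropped). Joint radiality forces $\lambda,\mu$ to be functions of $(\check{\rho}_i,\check{\tau}_i)$ alone, while the permutation invariance supplied by Corollary \ref{c-trasl-PEPD} descends to the invariance of $\lambda,\mu$ under the induced action of the symmetric group on these coordinates.

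The hard part will be this coordinate reduction. One must verify that $(\check{\rho}_i,\check{\tau}_i)$ genuinely forms a complete and independent set of generators for the ring of analytic $SO(2)$-invariants of $N-1$ planar vectors on the relevant domain, and---more delicately---that the symmetry-breaking introduced by singling out the reference angle $\theta_{i+1,i}$ is only apparent, so that the permutation symmetry of the vector arguments descends exactly to the invariances of $\check{\rho}_i$ and $\check{\tau}_i$ claimed in the statement. Care is also needed with the cyclic bookkeeping to ensure the coordinates are globally well defined on $\mathcal{O}$ and that the passage to the quotient by the diagonal rotation is analytic where $W_i\neq 0$.
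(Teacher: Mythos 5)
Your proposal follows the paper's route exactly: the paper's proof is precisely a direct application of Corollary \ref{c-trasl-PEPD} (with $\Centr=\mathbb{R}.\Id+\mathbb{R}.J$, giving the stated matrix acting on $(\lambda,\mu)^T$), and the coordinate reduction you single out as ``the hard part'' --- that jointly radial, permutation-invariant functions are exactly the functions of $(\check{\rho}_i,\check{\tau}_i)$ with the stated invariances --- is the one remaining point, which the paper settles by citing the invariant description already established in (the proof of) Proposition \ref{p-SE2}. So your approach is essentially identical to the paper's, merely more explicit about the final invariant-theoretic verification that the paper dispatches by reference.
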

\begin{proof} The proof is a direct application of Corollary \ref{c-trasl-PEPD}. The only remarkable detail is that all functions that are jointly radial (i.e. invariant by rototranslation) and permutation invariant are of the form $\lambda,\mu$ given in the statement; this fact has already been proved for Proposition \ref{p-SE2}.
\end{proof}
\begin{remark} This result, combined with Proposition \ref{p-GFSE2}, shows that the only possible PDs for $N=2$ that are gradient flows are also PEPDs.\end{remark}

We do not provide additional examples. For $N=2$ particles, we already had PEPD in Examples A.(1-3-7-8-10-12) in Figure \ref{f-A}. Examples for $N>2$ particles are similar, again with $\lambda>0$ promoting convergence and $\mu>0$ promoting clockwise rotation.

\section{Population Dynamics on the relativistic spaces}\label{relati}

In this section, we focus on Population Dynamics on the relativistic spaces.

\subsection{Population Dynamics on the relativistic line} \label{s-relat2}

In this section, we deal with PDs on the relativistic line: the state space is the pseudo-Euclidean plane $\R^2$ endowed with the quadratic form with signature $(1,1)$. We denote by $c>0$ the speed of light and consider the following scalar product: $$(T_1,x_1)\cdot (T_2,x_2)=c^2 T_1T_2-x_1x_2.$$

The group is the semidirect product $G=SO(1,1)\ltimes \R^2$ that preserves the associated quadratic form. We have the following result.

\begin{proposition} \label{p-relat2} Consider the FPD associated to $(N,\mathbb{R}^{2},SO(1,1)\ltimes \mathbb{R}^{2})$. Then:
\begin{itemize}
\item for $N=1$, the FPD contains the zero vector field only;

\item for $N=2$, all PDs are given by $F=F_{1}+F_2$ with $F_{i}=f_{i,1}\partial_{T_{i}}+f_{i,2}\partial_{x_{i}}$, where
\begin{equation*}
\left(
\begin{array}
[c]{c}%
f_{i,1}\\
f_{i,2}%
\end{array}
\right) =\left(
\begin{array}
[c]{cc}%
T_2-T_1 & c^{-1}(x_2-x_1)\\
x_2-x_1 & c(T_2-T_1)
\end{array}
\right)\left(
\begin{array}{c}\phi_i(r)\\\psi_i(r)\end{array}\right) \label{ratata}%
\end{equation*}

with $\phi_i(r),\psi_i(r)$ analytic functions of $r=\sqrt{c^2(T_2-T_1)^{2}-(x_2-x_1)^{2}}$;

\item for $N\geq 3$, and choosing any two distinct indexes $k_1,k_2\in \{2,\ldots, N\}$, all PDs are of the form 
$$F(X)=\sum_{i=1}^N f_i(Z)\partial_{X_i}\qquad\mbox{~~ with ~~}\qquad f_i=\psi_{i,1}(Z)Z_{k_1}+\psi_{i,2}(Z)Z_{k_2},$$
 where $\psi_{i,k}(Z)$ are $\hat{\LL}$-jointly radial functions of $Z=(Z_2,\ldots,Z_N)$ with $Z_i:=(T_i-T_1,x_i-x_1)$.

\end{itemize}
\end{proposition}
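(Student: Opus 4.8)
The plan is to treat the three regimes as successive applications of the translation-equivariance results of Section \ref{s-translation}, the only genuine computation being the determination of the Lie algebra $\KK$ of $SO(1,1)$ and of its centralizer $\Centr$. First I would record that, with respect to the metric $\eta=\mathrm{diag}(c^2,-1)$ associated to the scalar product, the condition $\ell^{T}\eta+\eta\ell=0$ forces $\KK=\R\,\ell$ to be one-dimensional, generated by
$$\ell=\begin{pmatrix} 0 & c^{-1}\\ c & 0\end{pmatrix}.$$
Since $\ell$ has distinct eigenvalues $\pm 1$, its centralizer in $\mathcal{M}_2$ is exactly $\Centr=\R\,\Id+\R\,\ell$, which has dimension $n=2$. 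This is the structural fact driving all three cases.

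For $N=1$, Proposition \ref{p-1} reduces any PD to a constant field $v$, and equivariance \eqref{maineq1} then reads $\ell\,v=0$; as $\det\ell=-1\neq 0$ this gives $v=0$, so the FPD is trivial. For $N\geq 3$ the number of difference variables satisfies $N-1\geq 2=n$, so Item 2 of Corollary \ref{c-Z} applies verbatim: $\SS=\EE$ is a free $\RR$-module with basis $\{Z_{k_1},Z_{k_2}\}$ for any two distinct $k_1,k_2\in\{2,\ldots,N\}$, which is precisely the claimed form $f_i=\psi_{i,1}Z_{k_1}+\psi_{i,2}Z_{k_2}$ with $\psi_{i,k}$ jointly radial.

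The substantive case is $N=2$, where Item 2 is unavailable (here $N-1=1<2=n$, so the single elementary solution $Z_2$ cannot span), and I would instead invoke Item 1 of Corollary \ref{c-Z}. Writing $Z_2=(T_2-T_1,x_2-x_1)$, the two generators $\Id\,Z_2$ and $\ell\,Z_2$ are exactly the two columns of the matrix in the statement; their determinant equals $c^{-1}\bigl(c^2(T_2-T_1)^2-(x_2-x_1)^2\bigr)=c^{-1}r^2$, so the rank-$2$ hypothesis holds precisely on the $K$-invariant open dense set $\{r^2\neq 0\}$. Corollary \ref{c-Z}, Item 1 then yields $\SS=\CC$, a free $\RR$-module with this basis, the coefficients $\phi_i,\psi_i$ being $\hat{\KK}$-jointly radial. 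It remains to identify $\RR$: the only $SO(1,1)$-invariant of a single difference variable on $\{r^2>0\}$ is $r=\sqrt{c^2(T_2-T_1)^{2}-(x_2-x_1)^{2}}$, so radial functions are analytic functions of $r$, giving the stated formula.

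I expect the only delicate point to be this last identification of the ring $\RR$ in the indefinite-signature setting: one must restrict to a connected component of $\{r^2>0\}$, where an orbit is a single hyperbola branch parametrized by $r$, in order to conclude that jointly radial analytic functions are exactly analytic functions of $r$ --- the light cone $\{r^2=0\}$ and the region $\{r^2<0\}$ being excluded from the domain. Everything else is a direct transcription of Corollary \ref{c-Z} once the centralizer $\Centr=\R\,\Id+\R\,\ell$ has been computed.
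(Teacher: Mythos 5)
Your proposal is correct and follows essentially the same route as the paper: Proposition \ref{p-1} plus invertibility of the $so(1,1)$ generator for $N=1$, Corollary \ref{c-Z}, Item 1 with the centralizer $\Centr=\R\,\Id+\R\,K$ (whose two generators applied to $Z_2$ give exactly the columns of the stated matrix) for $N=2$, and Corollary \ref{c-Z}, Item 2 for $N\geq 3$. Your extra care about where the rank condition holds ($\det[Z_2\,|\,KZ_2]=c^{-1}r^2\neq 0$) and about identifying the ring $\RR$ on the timelike region is a welcome sharpening of details the paper passes over silently, but it is the same argument.
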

\begin{proof} For $N=1$, we have that Proposition \ref{p-1} ensures that PDs are contained in the set of constant vector fields. By adding equivariance with respect to $SO(1,1)$, we have that the only PD is the zero vector field.

The case $N\geq 3$ is a direct application of Corollary \ref{c-Z}, Item 2. One can also apply Corollary \ref{c-Z}, Item 1.

The case $N= 2$ is a direct application of Corollary \ref{c-Z}, Item 1. Indeed, the centralizer of $so(1,1)=\R.K$ with 
$$K=\left(\begin{array}{cc} 0 & c^{-1}\\
c & 0\end{array}\right)$$ is $\Centr=\mathbb{R}.\Id+\R.K$, that has dimension $n=2$. Then, solutions are of the form $\phi(Z)Z+\psi(Z) K Z$, where $Z=(T_2-T_1,x_2-x_1)$. Moreover, $\phi(Z),\psi(Z)$ are $so(1,1)$-radial, thus depending on  $r=\|Z\|=\sqrt{c^2(T_2-T_1)^{2}-(x_2-x_1)^{2}}$ only.

\end{proof}

\begin{remark}\label{r-Eproper} The case $N=2$ is an interesting example in which it holds $\EE\subsetneq\SS$, i.e. in which elementary solutions do not provide all solutions. We can anyway provide a complete classification, due to the equality $ \CC =\SS$, i.e. observing that all solutions are solutions given by the centralizer.
\end{remark}

\begin{remark}\label{r-r-dyn} A direct computation for $N=2$ shows that $\dot r=r(\phi_1(r)-\phi_2(r))$. In particular, this implies that terms $\psi_1,\psi_2$ do not contribute to the dynamics of $r$ (i.e. the dynamics on the quotient space).\end{remark}

We now provide some simple examples for $N=2$. In Examples B.1-2-3, we consider the case of $\phi_1(r)=r$, $\phi_2(r)=1$ and $\psi_1=\psi_2=0$ with increasing values of $c\in\{1,10,100\}$. By Remark \ref{r-r-dyn}, it holds $\partial_t r(t)=r(1-r)$, hence the variable $r$ converges to the value 1. The limit dynamics is then given by $\phi_1=\phi_2=1$, i.e. translations with constant velocity 1 in all variables $T_1,x_1,T_2,x_2$. This corresponds to the fact that the relativistic dynamics converges to the so-called {\it relative equilibria} of the classical dynamics, i.e. classical dynamics with constant $x_1-x_2$. The key remark here is that the convergence of the relativistic dynamics to the classical one is faster when values of $c$ increase, as expected.

In Examples B.4-5-6, we investigate the role of $\psi_1,\psi_2$. They do not contribute to the dynamics of the variable $r$, but we show that they play a strong role in the dynamics of the state variable. In particular, we show that the dynamics is qualitatively different for different values of the parameter $c\in\{1,10,100\}$. We choose again $\phi_1(r)=r$, $\phi_2(r)=1$, ensuring convergence of $r$ to 1. By defining $\psi_1=100/c=-\psi_2$, we find that:
\begin{itemize}
\item for $c=1$ the distances $T_2-T_1,x_2-x_1$ explode;
\item for $c=10$ the distance $T_2-T_1$ converges to 0, while $x_2-x_1$ converges to a constant (i.e. to a classical relative equilibrium);
\item for $c=100$ the dynamics is similar to the case $c=10$, but convergence is faster for both variables.
\end{itemize}

\begin{figure}[htb]
\centering
\begin{subfigure}{0.25\textwidth}
\includegraphics[width=\textwidth]{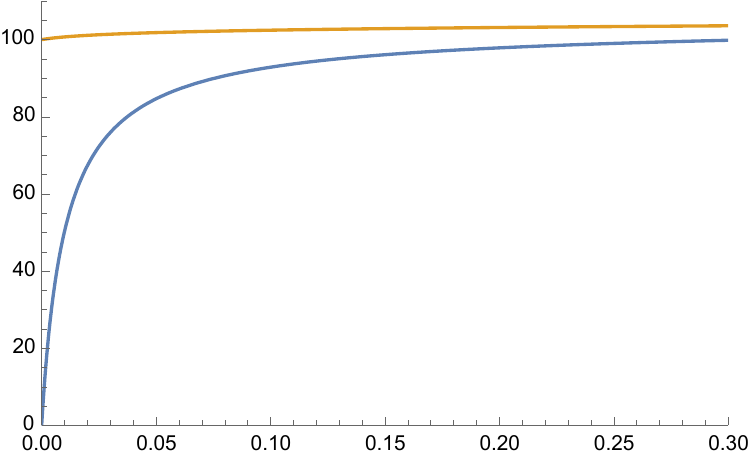}%
\end{subfigure}
\hfill
\begin{subfigure}{0.25\textwidth}
\includegraphics[width=\textwidth]{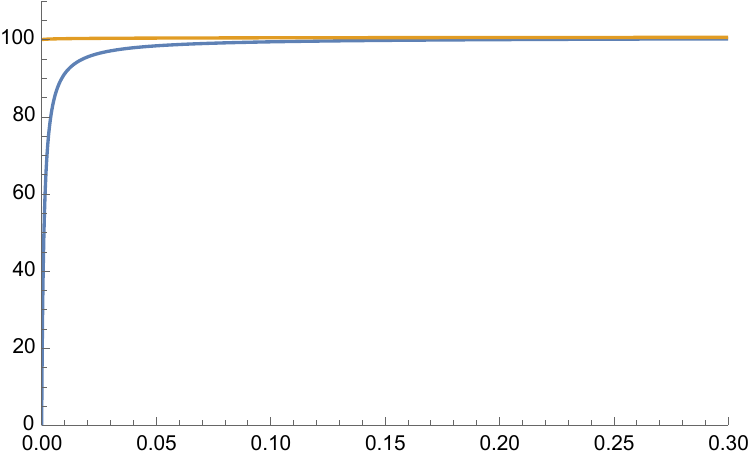}%
\end{subfigure}
\hfill
\begin{subfigure}{0.25\textwidth}
\includegraphics[width=\textwidth]{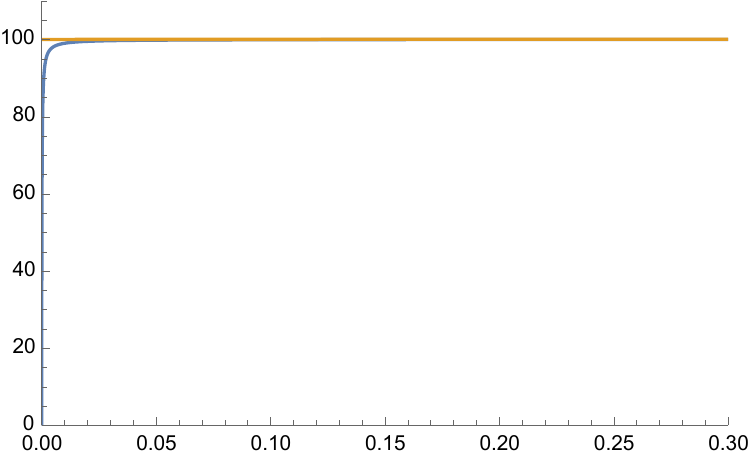}%
\end{subfigure}
\hfill
\begin{subfigure}{0.25\textwidth}
\includegraphics[width=\textwidth]{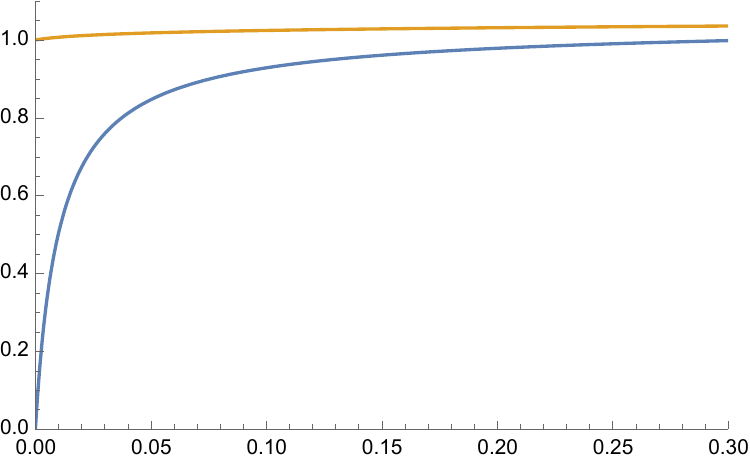}%
\caption{Example B.1 with $c=1$.}
\end{subfigure}
\hfill
\begin{subfigure}{0.25\textwidth}
\includegraphics[width=\textwidth]{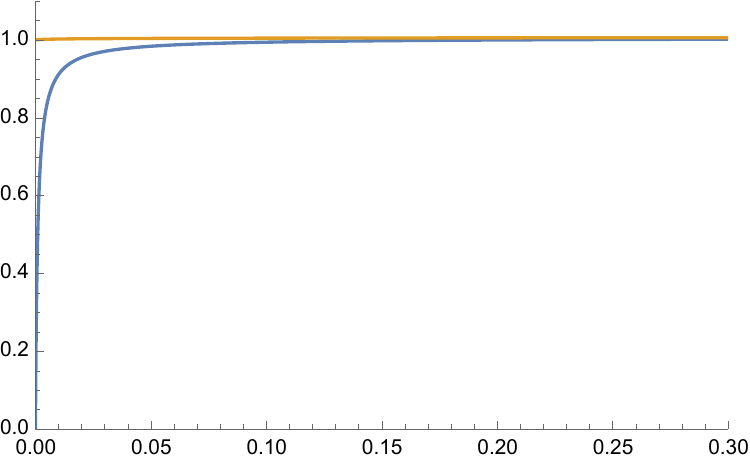}%
\caption{Example B.2 with $c=10$.}
\end{subfigure}
\hfill
\begin{subfigure}{0.25\textwidth}
\includegraphics[width=\textwidth]{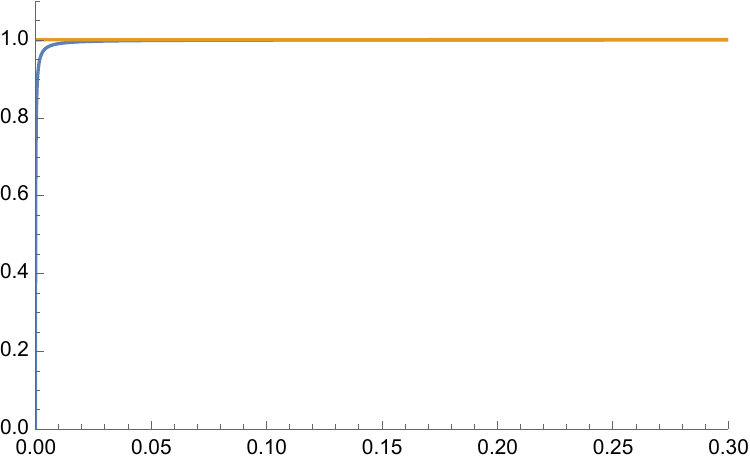}%
\caption{Example B.3 with $c=100$.}
\end{subfigure}
\hfill
\begin{subfigure}{0.25\textwidth}
\includegraphics[width=\textwidth]{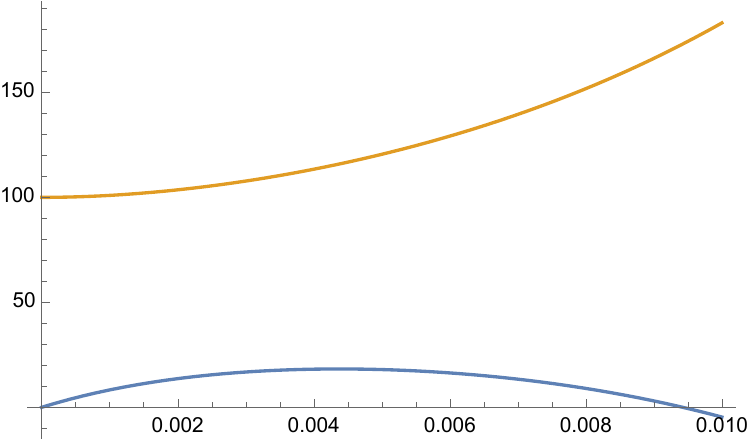}%
\end{subfigure}
\hfill
\begin{subfigure}{0.25\textwidth}
\includegraphics[width=\textwidth]{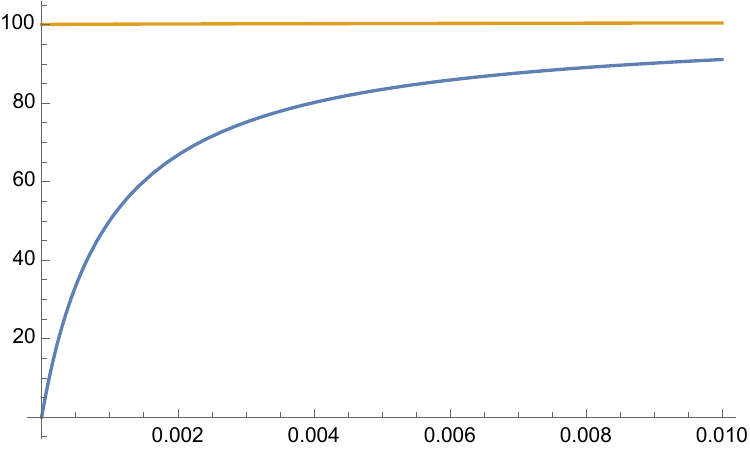}%
\end{subfigure}
\hfill
\begin{subfigure}{0.25\textwidth}
\includegraphics[width=\textwidth]{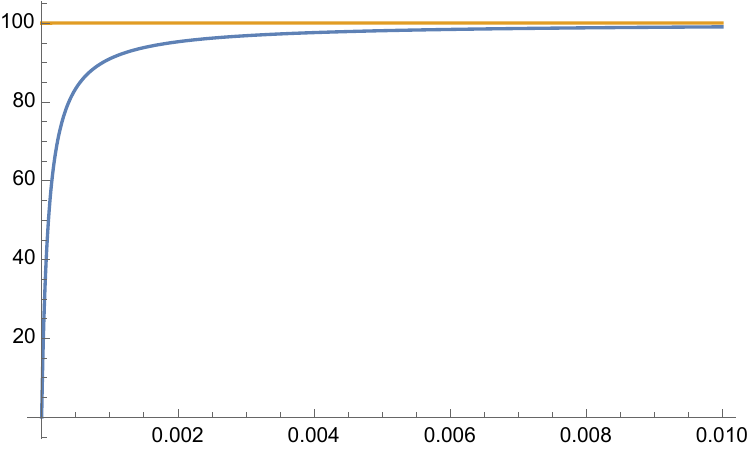}%
\end{subfigure}
\hfill
\begin{subfigure}{0.25\textwidth}
\includegraphics[width=\textwidth]{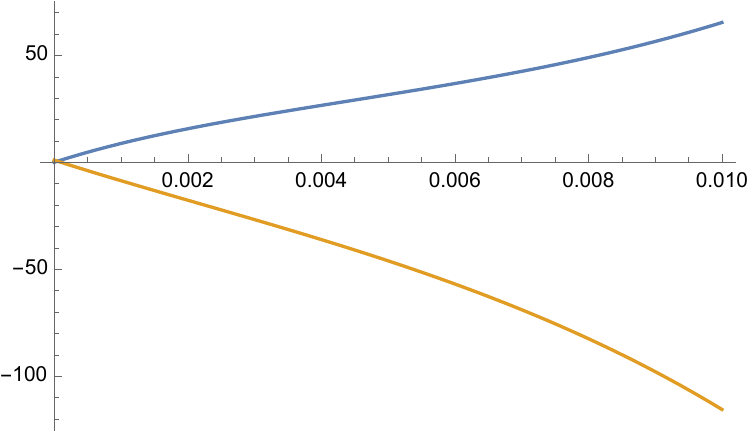}%
\caption{Example B.4 with $c=1$.}
\end{subfigure}
\hfill
\begin{subfigure}{0.25\textwidth}
\includegraphics[width=\textwidth]{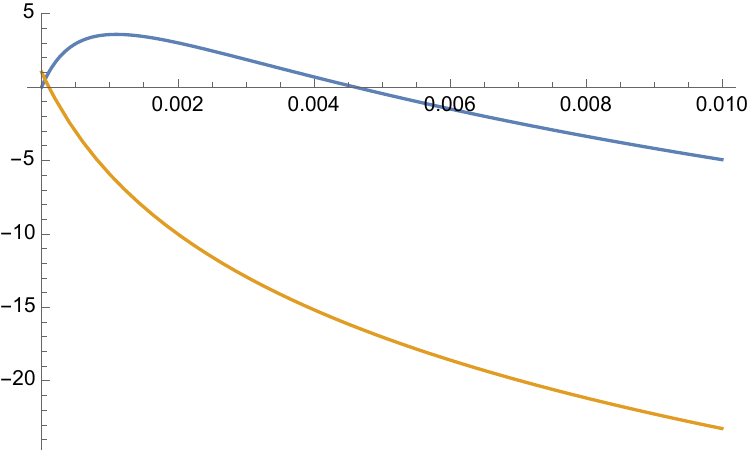}%
\caption{Example B.5 with $c=10$.}
\end{subfigure}
\hfill
\begin{subfigure}{0.25\textwidth}
\includegraphics[width=\textwidth]{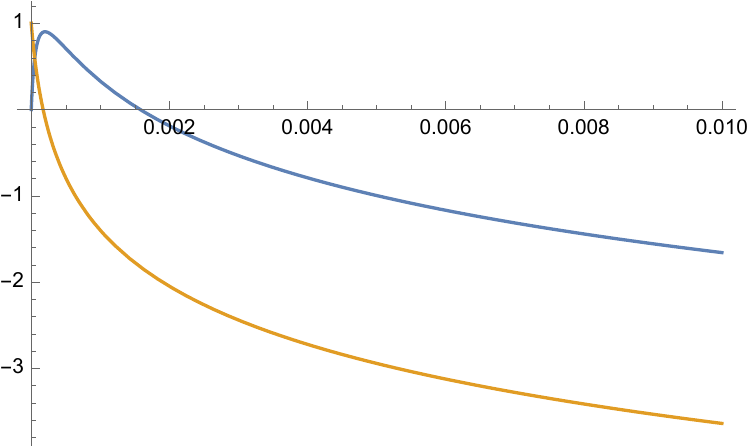}%
\caption{Example B.6 with $c=100$.}
\end{subfigure}
\hfill
\caption{Examples B (1 to 6). Variables $T$ above, $x$ below; Variables 1 (blue) and 2 (orange).}
\label{f-B}
\end{figure}

\subsection{Population Dynamics on the relativistic plane}\label{s-relat3}

In this section, we deal with PDs on the relativistic plane: the state space is the pseudo-Euclidean space $\R^3$ endowed with the quadratic form with signature $(1,2)$. Given two states $(T_1,x_1,y_1)$, $(T_2,x_2,y_2)$, the scalar product is 
$$(T_1,x_1,y_1)\cdot(T_2,x_2,y_2)=T_1T_2-x_1x_2-y_1y_2.$$

The group is then the semidirect product $G=SO(1,2)\ltimes \R^3$. We treat the cases $N=1$ and $N\geq 4$ with the general theory. We are then left with two special dimensions, that are $N=2,3$. For $N=2$, we prove that PDs are determined by elementary solutions via a direct computation. Instead, for $N=3$ we have no general statement. The results are summarized as follows.

\begin{proposition} \label{p-relat} Consider the FPD associated to $(N,\mathbb{R}^{3},SO(1,2)\ltimes \mathbb{R}^{3})$. Then:

\begin{itemize}
\item for $N=1$, the FPD is composed of the zero vector field only;
\item for $N=2$, all PDs are of the form
\begin{eqnarray*}
F(X_1,X_2)&=&\psi_1(r) \left((T_2-T_1)\partial_{T_1}+
(x_2-x_1)\partial_{x_1}+
(y_2-y_1)\partial_{y_1}\right)+\\
&&\psi_2( r) \left((T_2-T_1)\partial_{T_2}+
(x_2-x_1)\partial_{x_2}+
(y_2-y_1)\partial_{y_2}\right),
\end{eqnarray*}

with $r=\sqrt{(T_2-T_1)^2-(x_2-x_1)^2-(y_2-y_1)^2}$;
\item for $N=3$, the FPD {\bf contains} vector fields of the form 
\begin{eqnarray*}
F(X_1,X_2,X_3)&=&\sum_{i=1}^3
\phi_{i}(Z_2,Z_3) \left((T_2-T_1)\partial_{T_i}+
(x_2-x_1)\partial_{x_i}+
(y_2-y_1)\partial_{y_i}\right)+\\
&&\psi_{i}(Z_2,Z_3) \left((T_3-T_1)\partial_{T_i}+
(x_3-x_1)\partial_{x_i}+
(y_3-y_1)\partial_{y_i}\right),
\end{eqnarray*}

with 
$$Z_2:=(T_2-T_1,x_2-x_1,y_2-y_1),\qquad Z_3:=(T_3-T_1,x_3-x_1,y_3-y_1)$$
where $\phi_{i},\psi_{i}$ are analytic and $\hat{\LL}$-jointly radial with respect to the action of $SO(1,2)$;

\item for $N\geq 4$, and choosing any three distinct indexes $k_1,k_2,k_3\in \{1,\ldots, N\}$, all PDs are of the form 
$$F(X)=\sum_{i=1}^N f_i(Z)\partial_{X_i}\qquad\mbox{~~ with ~~}\qquad f_i=\psi_{i,1}(Z)Z_{k_1}+\psi_{i,2}(Z)Z_{k_2}+\psi_{i,3}(Z)Z_{k_3},$$
 where $\psi_{i,k}(Z)$ are $\hat{\LL}$-jointly radial functions of $Z=(Z_2,\ldots,Z_N)$ with $Z_i:=(T_i-T_1,x_i-x_1,y_i-y_1)$.

\end{itemize}
\end{proposition}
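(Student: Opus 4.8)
The plan is to treat the four ranges of $N$ separately, dispatching three of them to the general theory and reserving a direct argument for $N=2$, which is the genuinely new case. For $N=1$, I would invoke Proposition \ref{p-1} to reduce the FPD to constant vector fields $f\equiv v\in\R^3$, and then impose $SO(1,2)$-equivariance; in infinitesimal form this reads $l.v=0$ for every $l$ in the Lie algebra $so(1,2)$. Since the standard three-dimensional representation of the (connected) group $SO(1,2)$ on $\R^3$ is irreducible and admits no nonzero fixed vector, the only possibility is $v=0$.

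For $N\ge 4$ the hypothesis $N-1\ge 3=n$ of Corollary \ref{c-Z}, Item 2, is met, so $\SS=\EE$ and any three difference variables $Z_{k_1},Z_{k_2},Z_{k_3}$ form a basis; this is exactly the stated form. For $N=3$ I would claim only containment: by Proposition \ref{mainprop}, in its translated form of Corollary \ref{c-Z}, the difference variables $Z_2,Z_3$ are themselves solutions, and since $\SS$ is a module over the ring $\RR$ of jointly radial functions (Theorem \ref{t-FPD}), every combination with components $F_i=\phi_iZ_2+\psi_iZ_3$, with $\phi_i,\psi_i$ jointly radial, is a PD. I would stress that completeness cannot be asserted here: the centralizer of $so(1,2)$ is only $\R.\Id$ by irreducibility, so $\{c.Z\}$ has rank $1$, while $N-1=2<3$, meaning neither item of Corollary \ref{c-Z} applies. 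This is precisely the intermediate regime the paper leaves open.

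The heart of the argument is $N=2$. Here there is a single difference variable $Z=Z_2\in\R^3$, and equation \eqref{e-PDEz} collapses to the plain $SO(1,2)$-equivariance equation $\partial_Z f.\,lZ=l.f$ for $f\colon\R^3\to\R^3$ and all $l\in so(1,2)$. I would prove that every such $f$ is radial, i.e. $f(Z)=\psi(r)Z$, by an orbit--stabilizer argument rather than by solving the PDE system generator by generator. On the open dense set where the quadratic form $Q(Z)=r^2$ is nonzero, each orbit is a single level set of $Q$; fixing a base point $Z_0$, equivariance forces $f(Z_0)$ to be invariant under the stabilizer of $Z_0$. For $Z_0$ timelike the stabilizer is the rotation group $SO(2)$ in the plane orthogonal to $Z_0$, whose fixed subspace is $\R.Z_0$; for $Z_0$ spacelike the stabilizer is the boost group $SO(1,1)$ in the complementary plane of signature $(1,1)$, again with fixed subspace $\R.Z_0$. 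In either case $f(Z_0)\in\R.Z_0$, and since $Z\mapsto Z$ is itself equivariant, the scalar $\psi$ defined by $f=\psi Z$ is $G$-invariant, hence a radial function of $Q$. Writing $\psi=\psi(r)$ and applying this to each of the two components $F_1,F_2$ yields the stated form.

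The main obstacle is exactly this $N=2$ step: because the rank hypothesis of Corollary \ref{c-Z}, Item 1, fails and $N-1<n$, none of the general results apply, so the classification of equivariant fields must be established from scratch. The delicate point within the argument is that the orbit structure of $SO(1,2)$ on $\R^3$ is not homogeneous across the nonzero levels of $Q$, since timelike and spacelike orbits have genuinely different stabilizers; the fixed-point computation therefore has to be carried out in both cases, and one must restrict to the open dense $G$-space complementary to the null cone to guarantee that $\psi$ is a well-defined analytic radial function.
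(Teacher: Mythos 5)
Your treatment of $N=1$, $N=3$, and $N\geq 4$ coincides with the paper's (Proposition \ref{p-1} plus irreducibility; elementary solutions plus the module structure of $\SS$; Corollary \ref{c-Z}, Item 2, respectively), but your argument for the core case $N=2$ is genuinely different. The paper proceeds by brute force: it writes the generators of $so(1,2)$ in cylindrical coordinates $(T,\rho,\theta)$ on the difference variable, imposes $[F,V_3]=0$ to kill the $\theta$-dependence, and then reduces $[F,V_1]=[F,V_2]=0$ to the system $h=0$, $\rho f=Tg$, $\partial_T g+\partial_\rho f=0$, whose solutions are exactly $\psi(r)(T\partial_T+\rho\partial_\rho)$. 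You instead argue geometrically: at a point $Z_0$ off the null cone, equivariance forces $f(Z_0)$ to lie in the fixed subspace of the stabilizer of $Z_0$, which is $\R.Z_0$ both in the timelike case (stabilizer $SO(2)$) and in the spacelike case (stabilizer the boost group), so $f=\psi Z$ with $\psi$ necessarily $G$-invariant. Your route is shorter, coordinate-free, and generalizes at once to other signatures and to any action whose generic stabilizers have line-fixed subspaces (it would equally settle the $N=2$ case of $SO(3,\R)\ltimes\R^3$ mentioned in Remark \ref{r-proper-inclusion-2}); the paper's computation is more pedestrian but self-contained, and its cylindrical-coordinate normal form \eqref{e-F-SO12} is reused verbatim in the relativistic unicycle of Section \ref{relatuni}, which your argument does not produce directly.

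Two small points to tighten. First, your stabilizer argument uses equivariance at the group level, $f(gZ_0)=gf(Z_0)$, whereas \eqref{e-PDEz} is the infinitesimal condition; you should cite Proposition \ref{equivp1} (valid here by analyticity and connectedness) to pass from one to the other. Second, the claim that each orbit is a single level set of $Q$ is not literally true: for the connected group, a timelike level $\{Q=c>0\}$ splits into the two sheets of a hyperboloid, which are distinct orbits. This does not damage your proof — the stabilizer computation is pointwise, and the resulting $\psi$ is constant on orbits, hence a function of $r$ on each connected component of the complement of the null cone — and it matches the level of precision of the paper's own conclusion that $\psi$ "depends on $r$ only", but the sentence as written should be corrected.
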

\begin{proof} The cases $N=1$ and $N\geq 4$ are identical to Proposition \ref{p-relat2}.

We now study $N= 2$, for which we introduce the difference variable $$Z=(T,x,y):=(T_2-T_1,x_2-x_1,y_2-y_1)$$ and consider vector fields of a single particle of coordinate $Z$ that are equivariant under the action of $SO(1,2)$, i.e. solutions of \eqref{e-PDEz}. For this reason, we introduce polar coordinates in the $(x,y)$ difference variables, i.e.
$$(x_2-x_1,y_2-y_1)=\rho(\cos(\theta),\sin(\theta))\mbox{~~~with~}\rho\geq 0, \theta\in[0,2\pi).$$

It is easy to prove that the Lie algebra $so(1,2)$ is generated by 
$$V_1=x\partial_T+T\partial_x,\qquad V_2=y\partial_T+T\partial_y,\qquad V_3=-y\partial_x+x\partial_y.$$
By rewriting them in cylindrical coordinates $(T,\rho,\theta)$, it holds
$$V_1=\rho\cos(\theta)\partial_T+T\cos(\theta)\partial_\rho -T\frac{\sin(\theta)}\rho \partial_\theta, \qquad V_2=\rho\sin(\theta)\partial_T+T\sin(\theta)\partial_\rho + T\frac{\cos(\theta)}\rho \partial_\theta, \qquad V_3=\partial_\theta.$$

Let $$F=f(T,\rho,\theta)\partial_T+g(T,\rho,\theta)\partial_\rho+h(T,\rho,\theta)\partial_\theta$$ be a PD. The condition $[F,V_3]=0$ is equivalent to require that $f,g,h$ do not depend on $\theta$. For simplicity of notation, we now drop the dependence of $f,g,h$ on the remaining variables $T,\rho$. Then, we are left with imposing $[F,{V_1}]=[F,{V_2}]=0$, i.e.
\begin{eqnarray*}
[F,V_1]&=&f \left(\cos(\theta)\partial_\rho-\frac{\sin(\theta)}\rho\partial_\theta\right)+g \left(\cos(\theta)\partial_T+T\frac{\sin(\theta)}{\rho^2}\partial_\theta\right)-h V_2\\
&&-\rho\cos(\theta) ((\partial_T f)\partial_T+(\partial_T g) \partial_\rho+(\partial_T h)\partial_\theta)
-T\cos(\theta)((\partial_\rho f)\partial_T+(\partial_\rho g) \partial_\rho+(\partial_\rho h)\partial_\theta)=0,
\end{eqnarray*}
\begin{eqnarray*}
[F,V_2]&=&f \left(\sin(\theta)\partial_\rho+\frac{\cos(\theta)}\rho\partial_\theta\right)+g \left(\sin(\theta)\partial_T-T\frac{\cos(\theta)}{\rho^2}\partial_\theta\right)+h V_1\\
&&-\rho\sin(\theta) ((\partial_T f)\partial_T+(\partial_T g) \partial_\rho+(\partial_T h)\partial_\theta)
-T\sin(\theta)((\partial_\rho f)\partial_T+(\partial_\rho g) \partial_\rho+(\partial_\rho h)\partial_\theta)=0.\\
\end{eqnarray*}

It is clear that this condition is equivalent to $$\cos(\theta) [F,{V_1}]+\sin(\theta) [F,{V_2}]=0,\qquad -\sin(\theta) [F,{V_1}]+\cos(\theta)[F,{V_2}]=0,$$
i.e.
\begin{eqnarray}
&&f \partial_\rho+g \partial_T-h \frac{T}{\rho}\partial_\theta-\rho ((\partial_T f)\partial_T+(\partial_T g) \partial_\rho+(\partial_T h)\partial_\theta)-T((\partial_\rho f)\partial_T+(\partial_\rho g) \partial_\rho+(\partial_\rho h)\partial_\theta)=0,\label{e-relat1}\\
&&f \frac{1}\rho \partial_\theta-g  \frac{T}{\rho^2}\partial_\theta+h (\rho\partial_T+T\partial_\rho)=0.\label{e-relat2}
\end{eqnarray}
Equation \eqref{e-relat2} is equivalent to both 
\begin{equation}
h=0,\qquad \rho f =T g .\label{e-relat3}
\end{equation}
By plugging these conditions into \eqref{e-relat1}, we have
$$f\partial_\rho+g\partial_T-(\partial_T (Tg))\partial_T-\rho(\partial_Tg)\partial_\rho -T(\partial_\rho f)\partial_T-(\partial_\rho (\rho f))\partial_\rho=0.$$
A direct computation shows that this condition is equivalent to 
\begin{equation}
\partial_T g +\partial_\rho f=0.\label{e-relat4}
\end{equation} By using \eqref{e-relat3}, we write  $f=T \psi, g=\rho\psi$ for some $\psi=\psi(T,\rho)$ to be found. Equation \eqref{e-relat4} then reads as $\rho\partial_T\psi+T\partial_\rho\psi=0$, that is equivalent to state that $\psi$ only depends on the variable $r:=\sqrt{T^2-\rho^2}$. Summing up, it holds 
\begin{equation}
F(Z)=\psi(r)(T\partial_T+\rho\partial_\rho)
\label{e-F-SO12}
\end{equation}
for some $\psi(r)$. 
Going back to cartesian coordinates and to the original dynamics for $N=2$ particles, we have the result.

For $N=3$, we write the difference variables $Z_2,Z_3$ as in the statement. We then observe that elementary solutions $f$ of \eqref{e-PDEz} are of the form $\phi(Z_2,Z_3)Z_2+\psi(Z_2,Z_3)Z_3$ with $\phi,\psi$ being $\hat{\LL}$-jointly radial with respect to the action of $SO(1,2)$. The statement then follows from the fact that elementary solutions are solution, see Proposition \ref{mainprop}.
\end{proof}

\begin{remark}\label{r-proper-inclusion-2}
The case $N=2$ is interesting, since the following inclusions of solutions holds $\EE\subsetneq\CC=\SS$, as already discussed in Remark \ref{r-Eproper}. In the case case $N=3$, we are unable to completely describe the FPD, since we only have $\EE=\CC\subset \SS$, but it is unclear wether the last inclusion is proper or not.

A very similar case is given by $SO(3,\R)\ltimes \R^3$ acting on $\R^3$, already discussed in Remark \ref{r-Rn}. There, the centralizer is too small to describe all solutions via Corollary \ref{c-Z}, Item 1. Anyway, computations for $N=1$ or $N=2$ can be carried out exactly as in the proof of Proposition \ref{p-relat}. This is one more example of the fact that one can characterize PDs either for small $N$ by computations, or for large $N$ by Corollary \ref{c-Z}, Item 2. The intermediate values of $N$ are not completely solved by the theory developed here.
\end{remark}

We do not provide examples, since the case $N=1$ is trivial and $N\geq 2$ is hard to picture, since one needs a plot in dimension 3 or larger. If one considers PDs only depending on $T_i$ and $\rho_i=\|(x_i-x_1,y_i-y_1)\|$, i.e. not depending on the argument $\arg(x_i-x_1,y_i-y_1)$, observe that the dynamics of the $T_i,\rho_i$ variables is given by PDs on the relativistic line, studied in Section \ref{s-relat2}.

\section{Population Dynamics on spheres}
\label{s-sphere}

In this section, we study Population Dynamics on the spheres of dimension 1 and 2.

\subsection{Population dynamics on the circle $S^1$}\label{s-S1}

In this section, we study Population Dynamics on the circle $S^1\subset \R^2$. The natural group of invariance is $SO(2,\R)$, the rotations of the plane $\R^2$ around the origin, that is isomorphic to $S^1$. We are then in the case in which the Lie group $G$ and the manifold $M$ on which agents evolve coincide. We treated this case in Section \ref{s-GM}.

\begin{proposition} Consider the FPD associated to $(N,S^1,SO(2,\R))$. Denote by $\theta_i$ the angular position of agent $i\in\{1,\ldots,N\}$.  Then, 
\begin{itemize}
\item for $N=1$, all PDs are given by constant vector fields;
\item for $N\geq 2$ all PDs are given by $F(\theta_1,\ldots,\theta_N)=\sum f_i\partial_{\theta_i}$, where 
\begin{equation}\label{e-fi-group}
f_i(\theta_1,\ldots,\theta_N)=\phi_i(\theta_2-\theta_1,\theta_3-\theta_1,\ldots,\theta_N-\theta_1)
\end{equation}
for some $\phi_i(\alpha_2,\ldots,\alpha_N)$ analytic function.
\end{itemize}
\end{proposition}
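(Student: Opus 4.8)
The plan is to recognize this statement as a direct instance of the general theory for dynamics on Lie groups developed in Section \ref{s-GM} and to invoke Proposition \ref{p-LieGroups}. The circle $S^1$ is identified with the abelian Lie group $SO(2,\R)$, and its natural rotation action on itself is exactly left translation; in the angular coordinate $\theta$ the group law is addition modulo $2\pi$ and inversion is $\theta\mapsto-\theta$. Thus the triple $(N,S^1,SO(2,\R))$ falls precisely into the framework $G=M$ treated by Proposition \ref{p-LieGroups}, and the whole argument reduces to specializing that result.

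First I would compute the ingredients required by Proposition \ref{p-LieGroups}. Since $\dim SO(2,\R)=1$, here $n=1$, and a basis of the space of left-invariant vector fields consists of the single field $v_1=\partial_\theta$: left invariance is immediate because left translation $\theta\mapsto\alpha+\theta$ has trivial differential, and being abelian this field is right invariant as well. Proposition \ref{p-LieGroups} then forces every PD to be of the form $F=\sum_{i=1}^N f_i\,\partial_{\theta_i}$ with each coefficient $f_i=v_1(\theta_i)\,\phi_{i1}=\phi_{i1}$ a jointly radial function on $(S^1)^N$.

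Next I would translate the jointly-radial condition into the asserted difference coordinates. By the first statement of Proposition \ref{p-LieGroups}, a jointly radial function is of the form $\phi(g_1,\ldots,g_N)=\psi(g_1^{-1}g_2,\ldots,g_1^{-1}g_N)$; computing $g_1^{-1}g_j$ in the angular coordinate yields $\theta_j-\theta_1$, so that $f_i(\theta_1,\ldots,\theta_N)=\phi_i(\theta_2-\theta_1,\ldots,\theta_N-\theta_1)$ for an analytic $\phi_i$, exactly as claimed. The case $N=1$ is the degenerate instance: the left-translation action of $SO(2,\R)$ on $S^1$ is transitive, so a jointly radial function (constant on the single orbit) is constant, whence $f_1$ is constant and $F=c\,\partial_{\theta_1}$ is a constant vector field.

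The computation is essentially routine once this identification is in place; the only point deserving care is to check that the natural $SO(2,\R)$-action on $S^1$ genuinely coincides with the left-translation action underlying Proposition \ref{p-LieGroups}, and correspondingly that $g_1^{-1}g_j$ reads as $\theta_j-\theta_1$ in the angular coordinate. Everything else reduces to substituting $n=1$ and the explicit left-invariant field $\partial_\theta$ into the already-proven general statement.
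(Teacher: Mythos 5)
Your proposal is correct and follows essentially the same route as the paper: both proofs are a direct specialization of Proposition \ref{p-LieGroups} to $G=M=SO(2,\R)$ acting on itself by addition of angles, using $\partial_\theta$ as the basis of invariant vector fields (left- and right-invariant coinciding by commutativity) and the characterization of jointly radial functions as functions of $\theta_j-\theta_1$, with the $N=1$ case degenerating to constants. No gap to report.
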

\begin{proof} The proof is a direct application of Proposition \ref{p-LieGroups}. Recall that $\alpha\in SO(2)$ acts on $\theta\in S^1$ by addition $\theta+\alpha$, thus Proposition \ref{p-LieGroups}, Statement 1 ensures that jointly equivariant functions $f_i$ are of the form given in \eqref{e-fi-group}; in particular, for $N=1$ they are constant. Since the group operation is commutative, the basis of both right- and left-invariant vector fields is simply $\partial_\theta$. By Proposition \ref{p-LieGroups}, Statement 2, we have the result.
\end{proof}

We provide some examples for $N=2$. In Figure \ref{f-S1}, we show the time evolution of the states, that are in the interval $[0,2\pi]$ with identification $0=2\pi$. Remark that one needs to have functions $\phi_i$ that are $2\pi$-periodic with respect to their argument, that is always $\theta_2-\theta_1$.

\begin{description}
\item[Example C.1] The initial state is $\theta_1(0)=\pi-0.1$ and $\theta_2(0)=0.1$. By choosing $\phi_1(a)=-\sin(a)+0.1$,  $\phi_2(a)=-\sin(a)+0.1$, we have convergence of both particles towards a trajectory of constant velocity 0.1.
\item[Example C.2] The initial state is $\theta_1(0)=-0.1$ and $\theta_2(0)=0.1$. By choosing $\phi_1(a)=|\sin(a/2)|$,  $\phi_2(a)=-\phi_1(a)$, we have initial divergence of particles. Yet, particles then converge to the same point $\pi=-\pi$.
\item[Example C.3] The initial state is again $\theta_1(0)=-0.1$ and $\theta_2(0)=0.1$. By choosing $\phi_1(a)=\sin(a)$,  $\phi_2(a)=-\phi_1(a)$, we have divergence of particles. Compactness of the manifold implies that the distance between particles cannot go to infinity, but to its maximum value $\pi$.
\item[Example C.4] The initial state is $\theta_1(0)=0$ and $\theta_2(0)=\pi$. By choosing $\phi_1(a)=\sin(2a)+2$,  $\phi_2(a)=\sin(a)$, in particular with different frequencies, we have a more complex behavior.
\end{description}

\begin{figure}[htb]
\centering
\begin{subfigure}{0.48\textwidth}
\includegraphics[width=\textwidth]{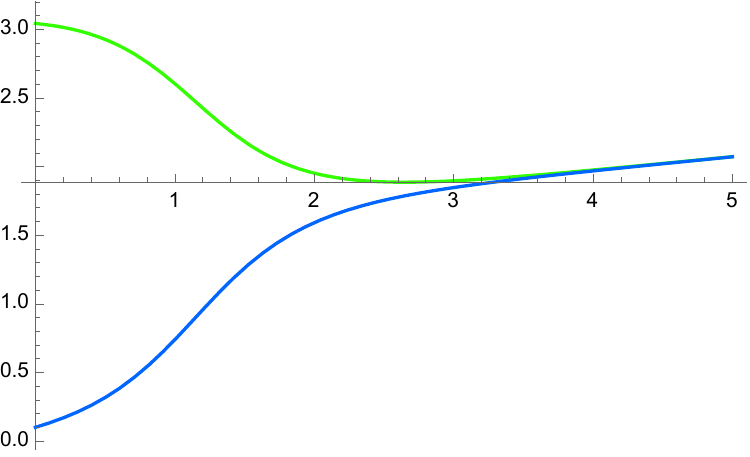}%
\caption{Example C.1: $\phi_1(a)=\phi_2(a)=-\sin(a)+0.1$.}
\end{subfigure}
\hfill
\begin{subfigure}{0.48\textwidth}
\includegraphics[width=\textwidth]{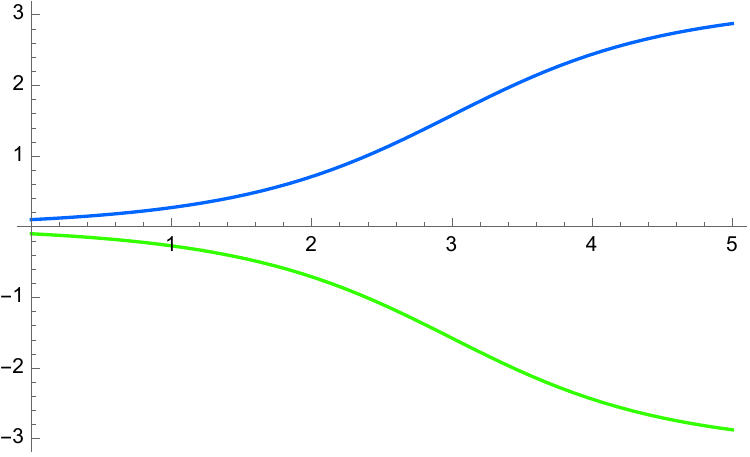}%
\caption{Example C.2: $\phi_1(a)=|\sin(a/2)|$,  $\phi_2(a)=-\phi_1(a)$}
\end{subfigure}
\hfill
\begin{subfigure}{0.48\textwidth}
\includegraphics[width=\textwidth]{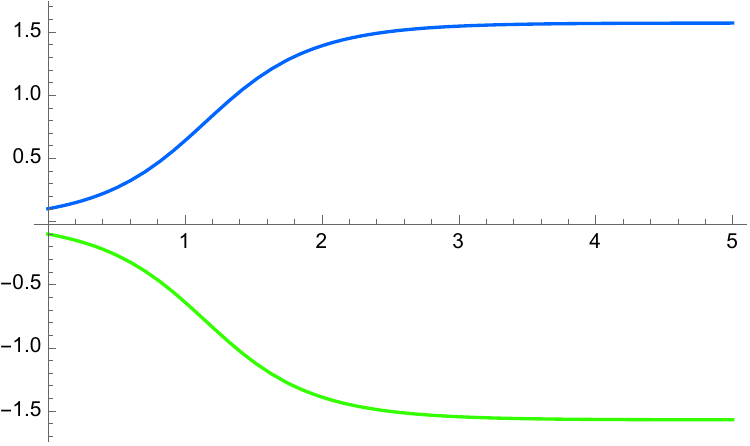}%
\caption{Example C.3: $\phi_1(a)=\sin(a)$,  $\phi_2(a)=-\phi_1(a)$}

\end{subfigure}
\hfill
\begin{subfigure}{0.48\textwidth}
\includegraphics[width=\textwidth]{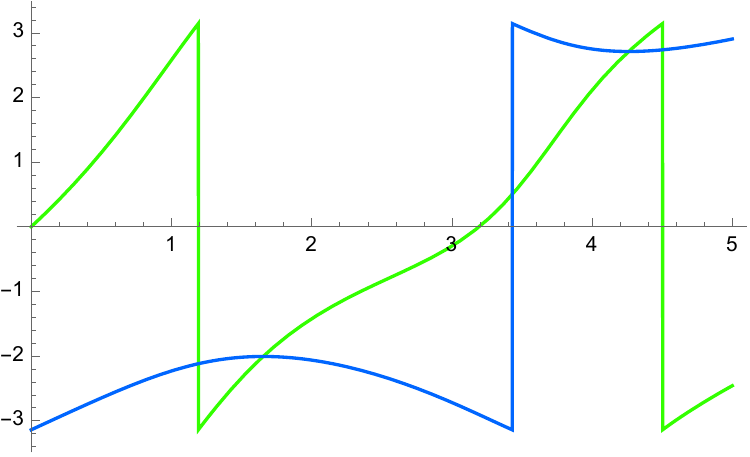}%
\caption{Example C.4: $\phi_1(a)=\sin(2a)+2$,  $\phi_2(a)=\sin(a)$}

\end{subfigure}
\hfill
\caption{Examples C (1 to 4).}
\label{f-S1}
\end{figure}

\subsection{Action of $SO(3,\R)$ on the sphere $S^{2}$}

In this section, we describe population dynamics on the two-dimensional sphere $S^2\subset \R^3$. From now on, we consider a sphere $S^2$ of radius 1. We use polar coordinates on the $(x,y)$ plane and mainly consider $z>0$ for the moment, i.e. we write
$$ x=\rho\cos(\theta),\qquad y=\rho\sin(\theta),\qquad z=\sqrt{1-\rho^2}.$$

We have two different groups that naturally act on the sphere. On one side, we consider the action of $SO(3,\R)$, that represents all rotations of the sphere. In the next Section \ref{s-SO2}, we will discuss the action of $SO(2,\R)$. 

The Lie group $SO(3,\R)$ is the group of all  rotations of $\R^3$ around the zero. Its Lie algebra $so(3,\R)$ is generated by the three following vector fields (written in the $\rho, \theta$-coordinates):%
$$A    =\sqrt{1-\rho^{2}}\left(-\sin(\theta)\partial_\rho-\frac{\cos(\theta)}{\rho}\partial_\theta\right),\qquad
B=\sqrt{1-\rho^{2}}\left(\cos (\theta)\partial_\rho-\frac{\sin(\theta)}{\rho}\partial_\theta\right),\qquad
C   =\partial_\theta.$$
It holds $[A,B]=-C$, $[B,C]=-A$, $[C,A]=-B$.

We now study the FPD in this case.

\begin{proposition} \label{p-SO3} Consider the FPD associated to $(N,S^2,SO(3,\R))$. Then:

\begin{itemize}
\item for $N=1$, the FPD is reduced to the null vector field;
\item for $N=2$, all PDs are of the form 
$$F(X)=\sum_{i=1}^N f_i(X)\partial_{X_i}\qquad\mbox{~~ with ~~}\qquad f_i=\phi_{i,1}(X) h(X_1,X_2)+\phi_{i,2}(X)h^\perp(X_1,X_2),$$
where $h(X_1,X_2)$ is the component of the gradient of the Riemannian distance $d(X_1,X_2)$ on the variable $X_1$ and $h^\perp$ is its orthogonal on $S^2$;

\item for $N\geq 3$, by choosing 3 distinct indexes $k_1,k_2,k_3$, all PDs are of the form 
$$F(X)=\sum_{i=1}^N f_i(X)\partial_{X_i}\qquad\mbox{~~ with ~~}\qquad f_i=\phi_{i,1}(X) h(X_{k_1},X_{k_2})+\phi_{i,2}(X)h(X_{k_1},X_{k_3}),$$
 where $\phi_{i,k}(X)$ are $\hat{\LL}$-jointly radial functions of $X=(X_1,\ldots,X_N)$ and $h(X_a,X_b)$ is the component of the gradient of the Riemannian distance $d(X_a,X_b)$ on the variable $X_a$.
 \end{itemize}
\end{proposition}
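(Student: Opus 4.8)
The plan is to treat the three cases with different tools, all ultimately resting on the module structure of Theorem \ref{t-FPD}: once we exhibit $n=2$ independent solutions on an open dense $G$-space, freeness of $\SS$ pins down every population dynamics. The guiding principle is that, since $S^2$ is Riemannian of dimension $2$ and $SO(3,\R)$ acts by isometries, the gradient results of the previous section supply all the solutions we need.

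For $N=1$ I would argue that the only equivariant field is zero, exploiting that $SO(3,\R)$ acts transitively on $S^2$ with isotropy $SO(2,\R)$. By equivariance, $F$ is determined by its value $F(p)$ at any single point $p$, and $F(p)$ must be fixed by the isotropy action on $T_pS^2$; since that action is rotation of the tangent plane, only the zero vector is fixed, so $F\equiv 0$. Equivalently, in the $(\rho,\theta)$ coordinates the condition $[F,C]=0$ with $C=\partial_\theta$ forces the coefficients of $F$ to be independent of $\theta$, and then imposing $[F,A]=[F,B]=0$ (using $[A,B]=-C$ and its cyclic companions) forces the remaining components to vanish.

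For $N=2$ the key is that the Riemannian distance $d(X_1,X_2)$ is jointly radial, being invariant under the isometric diagonal action of $SO(3,\R)$. By Proposition \ref{p-GF} its gradient is a population dynamics, so its $X_1$-component $h$ lies in $\SS$; and since $n=2$, Corollary \ref{c-perp} shows the pointwise rotation $h^\perp$ is a second solution. On the open dense $G$-invariant set where $X_1\neq X_2$ and $X_1,X_2$ are not antipodal, $d$ is analytic and $h$ is a nonzero tangent vector, so $\{h,h^\perp\}$ is a frame of $T_{X_1}S^2$; Theorem \ref{t-FPD} then gives that $\SS$ is free with this basis, which is the claimed form. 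To sidestep the cut locus entirely one may equally use the everywhere-analytic radial potential $\langle X_1,X_2\rangle$ in place of $d$, whose gradient differs from $\nabla d$ only by the radial factor $-\sin d$ and hence generates the same module.

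For $N\geq 3=n+1$, the statement is a direct application of Corollary \ref{c-distance}: the $X_{k_1}$-components of $\nabla d(X_{k_1},X_{k_2})$ and $\nabla d(X_{k_1},X_{k_3})$ are two solutions, independent wherever $X_{k_2},X_{k_3}$ do not lie on a common geodesic through $X_{k_1}$, which is again an open dense $G$-space, and the freedom in choosing the triple $k_1,k_2,k_3$ follows by relabeling. The main obstacle throughout is that the $SO(3,\R)$-action on $S^2$ is nonlinear, so the linear machinery of Theorem \ref{mainth} and Corollary \ref{c-Z} is unavailable and everything must be routed through the Riemannian gradient results; the one genuinely delicate point is the $N=2$ case, where a single distance yields only one gradient direction and the second basis element must be manufactured by the dimension-two perpendicular construction, while carefully excising the coincidence set and the cut locus to retain analyticity and independence.
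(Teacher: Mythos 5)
Your proof is correct, and for the cases $N=2$ and $N\geq 3$ it follows essentially the same route as the paper: joint radiality of the Riemannian distance, Proposition \ref{p-GF} to get $h\in\SS$, Corollary \ref{c-perp} to manufacture the second basis element $h^\perp$ in dimension two, Theorem \ref{t-FPD} for freeness on the open dense set $\{X_1\neq \pm X_2\}$, and Corollary \ref{c-distance} for $N\geq 3$. The only genuine divergence is the case $N=1$. The paper argues topologically: since $SO(3,\R)$ acts transitively, the domain of $F$ is all of $S^2$; the Euler characteristic of $S^2$ being $2$, Poincar\'e--Hopf forces a zero of $F$, and equivariance plus transitivity propagates that zero to every point. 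You instead use the isotropy representation: $F$ is determined by its value at one point $p$, and equivariance under the stabilizer $SO(2,\R)$ of $p$ forces $F(p)$ to be a rotation-invariant vector of $T_pS^2$, hence zero. Both arguments are valid, and each buys something: the paper's argument needs no identification of the isotropy action but is tied to manifolds with nonzero Euler characteristic, whereas your fixed-point argument is more elementary (no algebraic topology) and extends to any transitive isometric action whose isotropy representation has no nonzero fixed vectors --- e.g.\ the hyperbolic plane, where the compactness/Euler-characteristic route is unavailable. Your side remark that one may replace $d(X_1,X_2)$ by the globally analytic potential $\langle X_1,X_2\rangle=\cos d$, whose gradient is $-\sin(d)\,\nabla d$ and hence spans the same $\RR$-module on the set where $0<d<\pi$, is a small but sound refinement; note it excises exactly the same coincidence/antipodal set, so it streamlines the analyticity discussion rather than enlarging the domain.
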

\begin{proof} We first consider the case $N=1$. Let $F$ be a PD, whose domain is required to be open, dense and $G$-invariant: since $SO(3)$ acts transitively on $S^2$, its domain needs to be the whole $S^2$. Since $S^2$ has Euler characteristic 2, then $F$ has at least one zero, i.e. $F(\bar X)=0$ for some $\bar X\in S^2$. By equivariance of the vector field $F$, all points in the orbit of $\bar X$ satisfy the same condition, i.e. $F(g\bar X)=0$ for all $g\in SO(3)$. Since the action is transitive, we have $F(X)=0$ for all $X\in S^2$.

We now consider the case $N=2$. We have that $h$ is a solution, by recalling that the distance is radial and applying Proposition \ref{p-GF}. It is easy to prove that the domain of $h$ is $\{X_1\neq \pm X_2\}$, that is $G$-invariant and connected. Then, the definition of $h^\perp$ is unique, up to a single change of sign. It is a solution, due to Corollary \ref{c-perp}. It is easy to prove that it is independent on $h$, thus $h,h^\perp$ provide a basis of solutions for the 2-dimensional manifold. By Theorem \ref{t-FPD}, we have the result.

The case $N\geq 3$ is a direct application of Corollary \ref{c-distance}.

%
%
\end{proof}

 \subsection{Action of $SO(2,\R)$ on the sphere $S^2$} \label{s-SO2}
 
In this section, we consider PDs on $S^2$, where the group action is $SO(2,\R)$, that is the Lie group of all rotations of $\R^3$ around the $z$-axis. In this example, the group of rotations preserves a specific axis: this is particularly interesting for models of swarms moving on the Earth that take into account the magnetic field, i.e. that preserve the Earth's rotational axis passing through poles.

From now on, we consider the stereographic projection $(x,y,z)\mapsto \left(\frac{x}{1-z},\frac{y}{1-z}\right)$, that is a diffeomorphism between the sphere without poles $S^2\setminus\{(0,0,\pm 1)\}$ and the plane without the origin $\R^2\setminus\{0\}$. Moreover, with this change of coordinates, the action of $SO(2,\R)$ on $S^2\setminus\{(0,0,\pm 1)\}$ is identical to the classical one (rotations around the origin) on $\R^2\setminus\{0\}$. We then represent equivariant trajectories on $S^2$ by their stereographic projection, for simplicity of visualization.

\begin{proposition} Consider the FPD associated to $(N,\mathbb{R}^{2}\setminus\{0\},SO(2,\R))$ with $N\geq 1$, that is equivalent to the FPD associated to $(N,S^{2}\setminus\{(0,0,\pm1)\},SO(2,\R))$ by stereographic projection. Write $X_i=(x_i,y_i)=\rho_i(\cos(\theta_i),\sin(\theta_i))$ on $\R^2\setminus\{0\}$ in Cartesian and polar coordinates.

PDs are of the form $F={\displaystyle\sum\limits_{i=1}^{N}}
f_{i,1}\partial_{x_{i}}+f_{i,2}\partial_{y_{i}}$, with
$$
\left(\begin{array}{c}%
f_{i,1} \\
f_{i,2} %
\end{array}
\right)=\left(
\begin{array}
[c]{cc}%
x_{i} & -y_{i}\\
y_{i} & x_{i}%
\end{array}
\right) \left(\begin{array}{c}%
\phi_{i}(\rho_{1},\ldots,\rho_{N},\theta_{1}-\theta
_{2},\ldots,\theta_{1}-\theta_{N}) \\
\psi_{i} (\rho_{1},\ldots,\rho_{N},\theta_{1}-\theta
_{2},\ldots,\theta_{1}-\theta_{N})%
\end{array}
\right).
$$
\end{proposition}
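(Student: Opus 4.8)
The plan is to apply Theorem \ref{mainth}, Item 1, directly. The crucial observation is that here $SO(2,\R)$ acts \emph{linearly} on $M=\R^2\setminus\{0\}$ by rotations around the origin, with no translation component; thus, contrary to the rototranslation setting of Proposition \ref{p-SE2}, no passage to difference variables is needed, and we may work directly with the variables $X_1,\ldots,X_N$ and equation \eqref{maineq1}.

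First I would determine the centralizer. The Lie algebra $so(2,\R)$ is spanned by the single matrix $J$ of \eqref{e-J}, and a routine computation (identical to the one in Proposition \ref{p-SE2}) gives its centralizer in $\mathcal{M}_2$ as $\Centr=\R.\Id+\R.J$, of dimension $n=2$. Choosing the generators $c_1=\Id$ and $c_2=J$, the space $\{c.X,\ c\in\Centr\}$ equals $\mathrm{span}\{X,JX\}=\mathrm{span}\{(x,y),(-y,x)\}$, which has rank $2$ at every $X=(x,y)\neq 0$ because the two vectors are orthogonal and nonzero. Hence the rank hypothesis of Theorem \ref{mainth}, Item 1, holds on the open dense diagonal $G$-space $M^N$, and the conclusion $\SS=\CC$ follows, with $\SS$ a free $\RR$-module admitting, for any fixed index $i$, the basis $\{c_1 X_i, c_2 X_i\}=\{X_i, JX_i\}$.

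Consequently, each component $f_i$ is written uniquely as $f_i=\phi_i\,X_i+\psi_i\,JX_i$ with $\phi_i,\psi_i$ jointly radial. Substituting $X_i=(x_i,y_i)$ and $JX_i=(-y_i,x_i)$ reproduces exactly the matrix form in the statement. It then remains to identify the ring $\RR$ of $\hat{\LL}$-jointly radial functions. By Proposition \ref{proprad} these are the analytic functions annihilated by the diagonal field $\hat J=\sum_{i=1}^N (JX_i)\partial_{X_i}$; passing to polar coordinates $X_i=\rho_i(\cos\theta_i,\sin\theta_i)$, one computes that the field $(JX_i)\partial_{X_i}$ equals $\partial_{\theta_i}$, so that $\hat J=\sum_{i=1}^N\partial_{\theta_i}$ and the condition $L_{\hat J}\varphi=0$ is precisely invariance under the simultaneous shift $\theta_i\mapsto\theta_i+\alpha$. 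A complete set of invariants is therefore $\rho_1,\ldots,\rho_N$ together with the angle differences $\theta_1-\theta_2,\ldots,\theta_1-\theta_N$, which are exactly the arguments appearing in $\phi_i,\psi_i$.

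The one point deserving genuine care is the last one: the angle differences are a priori multivalued, so one must check that they define honest analytic, $2\pi$-periodic invariants generating $\RR$ on the stated domain. This is the main (if mild) obstacle, and it is handled exactly as in the rototranslation case already treated in Proposition \ref{p-SE2}. I expect everything else to be a direct transcription of the general linear theory of Section \ref{s-linear}, valid uniformly for all $N\geq 1$ (for $N=1$ it simply specializes to $\phi_1,\psi_1$ depending on $\rho_1$ alone, reflecting the fact that $\Centr$ is not reduced to $\R.\Id$).
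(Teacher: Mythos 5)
Your proposal is correct and follows essentially the same route as the paper: compute the centralizer $\R.\Id+\R.J$ of $so(2,\R)$, invoke Theorem \ref{mainth}, Item 1 (applicable directly since the action is linear, with no need for difference variables), and identify the jointly radial functions in polar coordinates as functions of $\rho_1,\ldots,\rho_N$ and the angle differences. The only cosmetic difference is that the paper obtains the invariants by applying the rotation of angle $-\theta_1$ to the variables, whereas you use the equivalent infinitesimal condition $L_{\hat{J}}\varphi=0$ from Proposition \ref{proprad}.
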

\begin{proof} Recall that the centralizer $\Centr$ of $so(2,\R)$ is $\R.\Id+\R.J$ with $J$ given by \eqref{e-J}. Since $\Centr$ has dimension 2, one can apply Theorem \ref{mainth}, Item 1. The space of solutions is then given by 
$\phi(X) X+\psi(X)JX$ with $\phi(X),\psi(X)$ being invariant with respect to the action of $SO(2,\R)$. By writing in polar coordinates $X_i=\rho_i(\cos(\theta_i),\sin(\theta_i))$, one can apply a rotation of angle $-\theta_1$ and introduce the variables $Y_i=\rho_i(\cos(\theta_i-\theta_1),\sin(\theta_i-\theta_1))$. Remark that, contrarily to the Euclidean case studied in Section \ref{s-SE2}, we still have the non-zero variable $Y_1=\rho_1(1,0)$. This implies that $\phi,\psi$ depend on variables $\rho_1,\ldots,\rho_N, \theta_2-\theta_1,\ldots,\theta_N-\theta_1$. \end{proof}

\begin{remark} \label{r-proper-inclusion-1} For $N=1$, we observe that the space of elementary solutions is strictly contained in the space of solutions, i.e. $\EE\subsetneq \SS$. Indeed, it is reduced to solutions of the form $\phi(X) X$. We are anyway able to completely characterize solutions, since $\CC=\SS$, i.e. solutions given by the centralizer are all solutions.

For $N\geq 2$, one can find alternative (equivalent) expressions for the PDs, by applying Theorem \ref{mainth}, Item 2.
\end{remark}

We now show some examples of PDs associated to $(N,\R^2\setminus\{0\},SO(2,\R))$ in the simple cases of $N=1,2$.

For $N=1$, in Figure \ref{f-S2}-Left, we show four different examples of PDs, all starting from $(x,y)=(1,1)$. We show that $\phi>0$ corresponds to move away from the origin, while $\psi>0$ corresponds to rotate counter-clockwise. Opposite signs mean opposite behavior. In particular, in Examples D.1, D.2 we have positive $\phi$, with zero and negative $\psi$, respectively. In Example D.3, we have $\phi\equiv 0$ and $\psi=10$, providing a periodic circular trajectory with constant angular velocity. In Example D.4, we have $\phi(r)=2-4r$, that provides stabilization of the radius to $0.5$; at the same time, $\psi=50>0$ provides rotation with constant angular velocity. Example D.4, that has a nature similar to Example A.3 above, shows that one can stabilize the system towards a given radius (i.e. a given parallel on $S^2$). Instead, stabilization of the angular variable (or both radial and angular) is not achievable with such PDs, due to the invariance with respect to rotations.


For $N=2$, in Figure \ref{f-S2}-Right we show three different examples of PDs, in which agent 1 is marked in green and agent 2 is marked in blue. Similarly to the previous case, we can have convergence of both the radius and angular variables (Example E.1), eventually choosing the radius (Example E.2, the target radius being $r=2$). One can also converge to a chosen angular difference (Example E.3, angular difference $\pi/8$), but cannot send agents to a chosen common angle. We explicitly have the following choices of functional parameters:

\begin{description}
\item[Example E.1] $\phi_1=\rho_2-\rho_1=-\phi_2$, $\psi_1=\sin(\theta_2-\theta_1)=-\psi_2$.

\item[Example E.2] $\phi_1=2-\rho_1$, $\phi_2=2-\rho_2$, $\psi_1=\sin(\theta_2-\theta_1)=-\psi_2$.

\item[Example E.3] $\phi_1=0.1$, $\phi_2=\rho_1-\rho_2$, $\psi_1=\sin(\theta_2-\theta_1-\pi/8)=-\psi_2$.
\end{description}

\begin{figure}[htb]
\centering
\begin{subfigure}{0.48\textwidth}
\includegraphics[width=\textwidth]{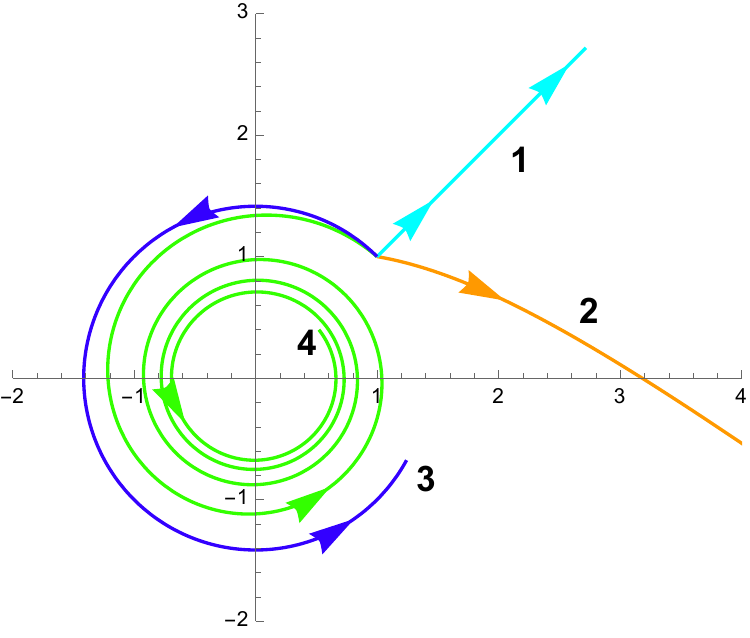}%
\caption{Examples D (1 to 4).}
\end{subfigure}
\hfill
\begin{subfigure}{0.4\textwidth}
\includegraphics[width=\textwidth]{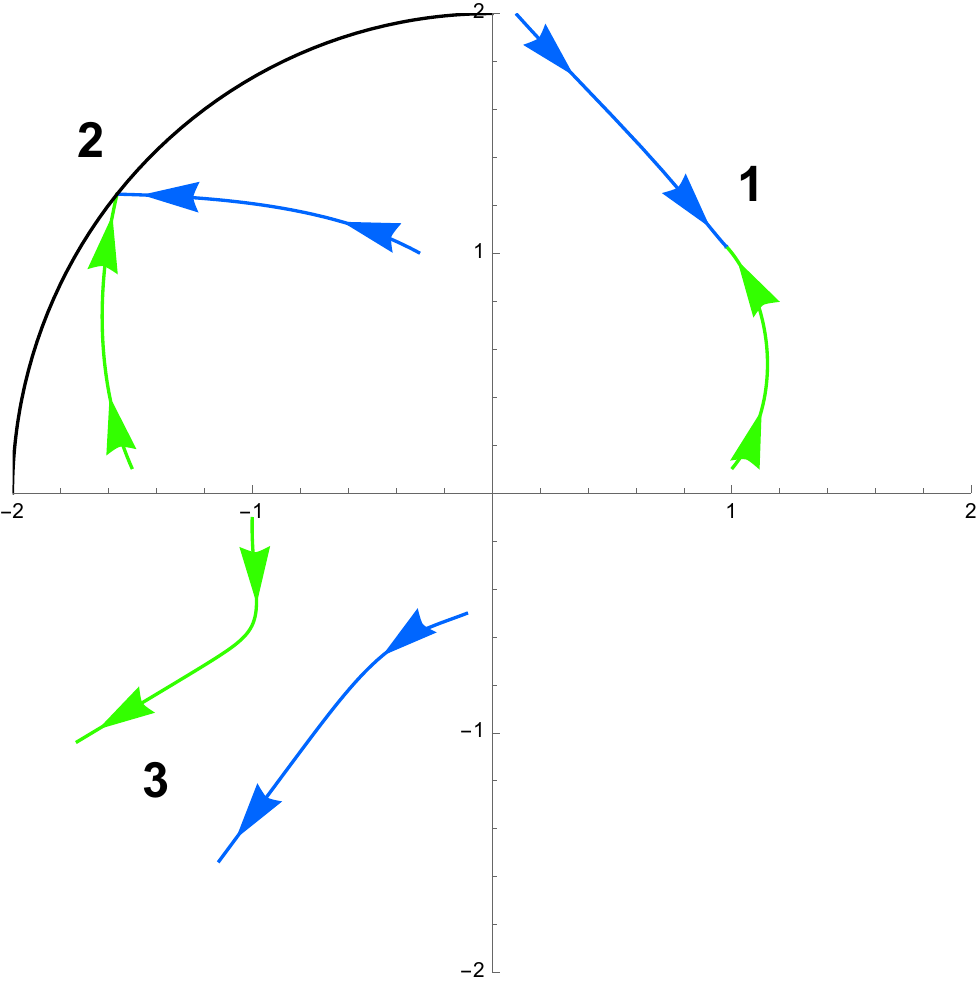}%
\caption{Examples E (1 to 3).}
\end{subfigure}

\caption{Examples on $S^2$ with $N=1$ and $N=2$.}
\label{f-S2}
\end{figure}


%

%
%
%

\section{Volume preserving Population Dynamics on the plane}\label{s-SL2}
  
In this section, we consider Population Dynamics on the punctured plane $\R^2\setminus\{0\}$ under the action of $SL(2)$. The case is interesting, since it is the only one, among our examples, in which the group is not a group of isometries (or pseudo-isometries).

The resulting dynamics on the punctured plane $\R^2\setminus\{0\}$ preserve volumes: given two points $X_1=(x_1,y_1)$ and $X_2=(x_2,y_2)$, we denote by $$X_1\wedge X_2:=\left|\begin{array}{cc}x_1&x_2\\
y_1&y_2\end{array}\right|=x_1y_2-y_1x_2$$
the (oriented) area of the parallelogram with vertices $0,X_1,X_1+X_2,X_2$. It is easy to prove that volumes are preserved by the action of the special linear group $SL(2,\R)$ composed of matrices with determinant 1.

We now describe jointly radial functions and PDs for the action of $SL(2)$.
\begin{proposition} \label{p-sl2} Let $M=\mathbb{R}^{2}\setminus\{0\}$. Consider the set $\mathcal{O}=\{X_1\wedge X_2\neq 0\}\subset M^N$, that it is open dense, $G$-invariant.

The jointly radial functions on $M^N$ defined on $\mathcal{O}$ are 
\begin{equation}\label{e-jr-sl2}
\psi(X_1,\ldots,X_N)=\phi(X_1\wedge X_2, [X_1 | X_2]^{-1} X_3,\ldots, [X_1 | X_2]^{-1} X_N),
\end{equation}
where $[X_1 | X_2]$ is the $2\times 2$-matrix with columns given by $X_1,X_2$ and $\phi$ is an analytic function of its $N-2$ variables in $\R^2$. 
Consider now the FPD associated to $(N,,SL(2,\R))$. 
\begin{itemize}
\item For $N=1$, all PDs are of the form $F=\alpha \left(x_{1}\partial_{ x_{1}}+y_{1}\partial_{ y_{1}}\right)$ for $\alpha\in\mathbb{R}$. 

\item For $N\geq 2$, choose two distinct indexes $j_1,j_2\in\{1,\ldots,N\}$. Then, all PDs are of the form $$F=\sum_{i=1}^N \phi_{1,i}\left(x_{j_1}\partial_{ x_{i}}+y_{j_1}\partial_{ y_{i}}\right)+\phi_{2,i}\left(x_{j_2}\partial_{ x_{i}}+y_{j_2}\partial_{ y_{i}}\right)$$ where $\phi_{i,1},\phi_{i,2} $ are arbitrary jointly-radial functions, i.e. of the form \eqref{e-jr-sl2}.
\end{itemize}
\end{proposition}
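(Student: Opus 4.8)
The plan is to prove the statement in three stages: the description of the jointly radial functions, then the case $N=1$ by a direct argument, and finally $N\geq 2$ as an application of Theorem \ref{mainth}. The substantive work is concentrated in the first stage, together with the observation that $N=1$ escapes the hypotheses of that theorem and must be handled by hand.

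For the radial functions, I would first check that the proposed arguments are genuinely invariant under the diagonal action. Since every $g\in SL(2,\R)$ satisfies $\det g=1$, the quantity $X_1\wedge X_2=\det[X_1\,|\,X_2]$ is preserved; and for $j\geq 3$ one has $[gX_1\,|\,gX_2]^{-1}(gX_j)=[X_1\,|\,X_2]^{-1}g^{-1}gX_j=[X_1\,|\,X_2]^{-1}X_j$, so each of these is invariant too. By Proposition \ref{proprad} they are radial. The crux is \emph{completeness}: I must show that two tuples in $\mathcal{O}$ with identical invariants lie on one orbit. This I would do explicitly, setting $A=[X_1\,|\,X_2]$ and $A'=[X_1'\,|\,X_2']$; equality of determinants gives $g:=A'A^{-1}\in SL(2,\R)$, and using $A^{-1}X_1=e_1$, $A^{-1}X_2=e_2$ together with the invariance of $[X_1\,|\,X_2]^{-1}X_j$ one checks $gX_i=X_i'$ for every $i$. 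Hence the listed invariants separate orbits on $\mathcal{O}$, so every analytic radial function factors through them, giving \eqref{e-jr-sl2}. As a consistency check, the action is free on $\mathcal{O}$, so generic orbits have dimension $3$, matching the $2N-3$ scalar invariants exhibited.

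For $N=1$ the general theorem does not apply, and this is the delicate point: here $\Centr=\R\cdot\Id$, so $\{c.X:c\in\Centr\}$ has rank $1<n=2$ and Item 1 of Theorem \ref{mainth} fails, while $N<n$ excludes Item 2. Instead I would use that $SL(2,\R)$ acts transitively on $\R^2\setminus\{0\}$, so an equivariant $F$ is determined by the single value $f(e_1)$, which must be fixed by the stabilizer of $e_1$ — the unipotent group of matrices $\left(\begin{smallmatrix}1&b\\0&1\end{smallmatrix}\right)$. Since this group maps $(u,v)$ to $(u+bv,v)$, fixing $f(e_1)$ forces its second coordinate to vanish, i.e.\ $f(e_1)\in\R\,e_1$; transporting by equivariance yields $F=\alpha(x_1\partial_{x_1}+y_1\partial_{y_1})$.

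Finally, for $N\geq 2$ I would invoke Theorem \ref{mainth}, Item 2, which applies precisely because $\dim M=n=2$ and $N\geq n$. On the open dense $G$-invariant set where $X_{j_1}\wedge X_{j_2}\neq 0$ the elementary solutions $X_{j_1},X_{j_2}$ are independent, so $\SS=\EE$ is a free $\RR$-module with basis $\{X_{j_1},X_{j_2}\}$; expanding each component of $F$ in this basis with coefficients in $\RR$ produces exactly the asserted form, the coefficients $\phi_{1,i},\phi_{2,i}$ being the jointly radial functions of \eqref{e-jr-sl2}. The only remaining point, which is routine bookkeeping, is to reconcile the open dense set supplied by the theorem with the domain $\mathcal{O}$ on which \eqref{e-jr-sl2} was derived, both being complements of the vanishing locus of a wedge product.
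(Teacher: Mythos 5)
Your proposal is correct, and two of its three stages diverge from the paper's own proof in a way worth recording. For the jointly radial functions, the paper verifies invariance exactly as you do (determinant preservation plus the cancellation $[gX_1|gX_2]^{-1}gX_j=[X_1|X_2]^{-1}X_j$), but for completeness it merely asserts that the invariants ``play the role of coordinates in the quotient $\mathcal{O}/G$''; you instead prove orbit separation explicitly, constructing $g=A'A^{-1}\in SL(2,\R)$ from equality of determinants and checking $gX_i=X_i'$ for all $i$ --- this fills in a step the paper leaves implicit, and your free-action dimension count ($3$-dimensional orbits versus $2N-3$ invariants) is a sound consistency check; to be fully complete one should also note that the invariant map admits an analytic section (e.g.\ $(d,Y_3,\ldots,Y_N)\mapsto(e_1,de_2,[e_1|de_2]Y_3,\ldots)$), so that the factored $\phi$ is analytic, but this is routine. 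For $N=1$ the routes genuinely differ: the paper writes $F=f\partial_x+g\partial_y$ and solves the bracket equations $[F,l_i]=0$ against the basis $l_1=y\partial_x$, $l_2=x\partial_y$, $l_3=x\partial_x-y\partial_y$ of $sl(2,\R)$, deducing $g=g(y)$, $f=f(x)$ and then $f=kx$, $g=ky$; you use transitivity plus the stabilizer of $e_1$ (the unipotent subgroup), whose only fixed vectors are $\R e_1$, and transport by equivariance. Your argument is shorter and more conceptual --- it shows that for any transitive action the equivariant fields are classified by stabilizer-fixed tangent vectors --- while the paper's is a self-contained PDE computation needing no orbit theory; note that your use of the pointwise form $F(gx)=gF(x)$ is legitimate within the paper's framework, being exactly condition \eqref{equivdef} of Definition \ref{d-PD} (equivalently, it follows from Proposition \ref{equivp1}). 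For $N\geq2$ both proofs are identical: apply Theorem \ref{mainth}, Item 2, on the open dense set where $X_{j_1}\wedge X_{j_2}\neq0$, and your remark about reconciling that set with $\mathcal{O}$ is the same bookkeeping the paper silently performs.
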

\begin{proof} We first prove that jointly radial functions are of the form \eqref{e-jr-sl2}. It is easy to prove that $\psi$ of such form is jointly equivariant: the diagonal action of $g\in SL(2)$ reads as
\begin{eqnarray*}
\psi(gX_1,\ldots,gX_N)&=&\phi((gX_1)\wedge (gX_2), [gX_1 | gX_2]^{-1} (gX_3),\ldots, [gX_1 | gX_2]^{-1} gX_N)\\
&=&\phi(X_1\wedge X_2, [X_1 | X_2]^{-1} X_3,\ldots, [X_1 | X_2]^{-1} X_N)=\psi(X_1,\ldots,X_N).
\end{eqnarray*}
Here we used that the determinant is invariant under the action of $g$ and that 
$$[gX_1 | gX_2]^{-1} gX_i=[X_1 | X_2]^{-1} g^{-1}gX_i=[X_1 | X_2]^{-1} X_i.$$

It is clear that all jointly radial functions are of this form, since the variables in \eqref{e-jr-sl2} play the role of coordinates in the quotient $\mathcal{O}/G$. 

We now study the case $N=1$. Let $F$ be a PD, which domain is a $G$-space. Since $SL(2,\R)$ acts transitively on $M$, its domain is the whole $M$. Write $F(X)=f(x,y)\partial_x+g(x,y)\partial_y$ and write $[F,l]=0$ for $l$ in the basis of the Lie algebra $so(2,\R)$, that is 
$$l_1=y\partial_x,\qquad l_2=x\partial_y,\qquad l_3=x\partial_x-y\partial_y.$$
A direct computation shows:
$$ [F,l_1]=g\partial_x-y(\partial_x f) \partial_x-y(\partial_x g) \partial_y,\qquad [F,l_2]=f\partial_y-x(\partial_y f) \partial_x-x(\partial_y g) \partial_y.$$
This implies that $g=g(y)$ and $f=f(x)$. Since $g(y)=y \partial_x f$, then $g=k y$ for some $k\in \R$. Similarly, $f=kx$ for the same $k$. By linearity, we are now left to prove that  $F(x)=x\partial_x+y\partial_y$ satisfies 
$$[F,l_3]=(x\partial_x-y\partial_y)-(x\partial_x-y\partial_y)=0.$$
This proves the result.

The case $N\geq 2$ is a direct application of Theorem \ref{mainth}, Item 2.
\end{proof}

%

\section{Controlled Population Dynamics: the unicycle} \label{s-unicycle}
In this section, we describe an interesting problem of Controlled Population Dynamics as given in Definition \ref{d-CPD}, i.e. a PD in which the dynamics is constrained to belong to a subset $\Omega$ of the Lie algebra of vector fields. In the terminology of mechanical systems, constraints on the velocity are known as {\it non-holonomic}.

\subsection{The classical unicycle}\label{s-class-uni}
 The unicycle is a classical model of a vehicle on the plane $\R^2$ moving along straight lines or turning on itself, or combining these displacements. The state space is then $\R^2\times S^1$, i.e. the state is described by the position-orientation variables $(x,y,\theta)$. The dynamics is given by 
\begin{equation}\label{unicycle}
\dot x=\cos(\theta)u(t),\qquad \dot y=\sin(\theta)u(t),\qquad \dot \theta=v(t). %
\end{equation}
Here, $u(t),v(t)$ are the control variables, accounting for straight movements and rotations, respectively. Remark that the constraint is linear, since $\Omega$ is generated by $F_1=\cos(\theta)\partial_x+\sin(\theta)\partial_y, F_2=\partial_\theta$. It is not a Lie algebra, since it holds $[F_1,F_2]=-\sin(\theta)\partial_x+\cos(\theta)\partial_y\not\in\Omega$.

One of the interests of this model in the context of PDs is that it is a first-order system that allows to describe orientation of the displacements (via the variable $\theta$). This is in sharp contrast with other models, such as the celebrated Cucker-Smale model \cite{CS}, in which orientation is encoded by a second-order dynamics. Clearly, the  advantage of dealing with first-order systems is here mitigated by the non-holonomic constraint.

We now impose equivariance with respect to the action of the group $G=SE(2)$ of rototranslation of the plane. The action on $\R^2\times S^1$ is the following: given an element 
$$g=\left(\begin{array}{ccc}
\cos(\tau)&-\sin(\tau)&a\\
\sin(\tau)&\cos(\tau)&b\\
0&0&1\end{array}\right)
\in SE(2)$$ and $(x,y,\theta)\in \R^2\times S^1$, it holds
\begin{equation}\label{e-SE2-unicycle}
\Phi(g)\left(\begin{array}{c}
x\\
y\\
\theta\end{array}
\right)= \left(\begin{array}{ccc}
\cos(\tau)&-\sin(\tau)&0\\
\sin(\tau)&\cos(\tau)&0\\
0&0&1\end{array}\right)
\left(\begin{array}{c}
x\\
y\\
\theta
\end{array}
\right)+
\left(\begin{array}{c}
a\\
b\\
\tau
\end{array}
\right).
\end{equation}

The Lie algebra $\LL$ is then generated
by the 3 vector fields $\left\{\partial_x,\partial_y,-y\partial_x+x\partial_y+\partial_\theta\right\}$. 

We identify the unicycle variables $(x,y,\theta)$ with the element $X\in SE(2)$ defined by $$X=\left(\begin{array}{ccc}
\cos(\theta)&-\sin(\theta)&x\\
\sin(\theta)&\cos(\theta)&y\\
0&0&1
\end{array}\right).$$

In this framework, equation \eqref{unicycle} describes left-invariant vector fields, while \eqref{e-SE2-unicycle} is the left translation. See the Appendix for more details.

We are now ready to describe the Controlled FPD for the unicycle. 
\begin{proposition} Consider the Family of CPD associated to $(N,\mathbb{R}^{2}\times S^1,SO(2,\R)\ltimes
\mathbb{R}^{2},\Omega)$ with $N\geq 1$ and $\Omega$ given by \eqref{unicycle}. Denote by $X_i=(x_i,y_i,\theta_i)$ the state of the $i$-th agent and $X:=(X_1,\ldots,X_N)$. Let 
$$\dot X_i=u_i(X)F_i+v_i(X)G_i\qquad\qquad\mbox{ with }\qquad F_i=\cos(\theta_i)\partial_{x_i}+\sin(\theta_i)\partial_{y_i}\quad\mbox{~~ and ~~}\quad G_i=\partial_{\theta_i}.$$

Then, the controlled FPD is characterized by the fact that
$u_{i},v_{i}$ are analytic functions of $\rho_2,\ldots,\rho_N$, $\alpha_2-\theta_1,\ldots,\alpha_N-\theta_1,\theta_2-\theta_1,\ldots,\theta_N-\theta_1$ only, where $Z_i:=(x_i-x_1,y_i-y_1)=\rho_i(\cos(\alpha_i),\sin(\alpha_i))$ for $i=2,\ldots,N$.
\end{proposition}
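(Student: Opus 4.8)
The plan is to recognize this as an instance of the Population Dynamics on Lie groups studied in Section \ref{s-GM}, and then to impose the nonholonomic constraint $\Omega$. As already noted in the text, identifying the unicycle state $(x_i,y_i,\theta_i)$ with the matrix $X_i\in SE(2)$ turns the group action \eqref{e-SE2-unicycle} into left translation on $M=SE(2)$, and turns the control vector fields $F_i,G_i$ into \emph{left-invariant} vector fields on the $i$-th factor of $M^N$. First I would verify this last claim directly: translating the tangent vectors $\partial_x,\partial_\theta$ at the identity by left multiplication $L_{X_i}$ yields exactly $F_i=\cos(\theta_i)\partial_{x_i}+\sin(\theta_i)\partial_{y_i}$ and $G_i=\partial_{\theta_i}$, while $\partial_y$ produces the third left-invariant field $H_i:=-\sin(\theta_i)\partial_{x_i}+\cos(\theta_i)\partial_{y_i}$. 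Thus $\{F_i,G_i,H_i\}$ is a basis of the left-invariant fields on the $i$-th factor.

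With this identification in hand, Proposition \ref{p-LieGroups} applies directly: the (unconstrained) FPD consists of all vector fields $\sum_i\big(a_iF_i+b_iG_i+c_iH_i\big)$ whose coefficients $a_i,b_i,c_i$ are jointly radial, since each left-invariant field commutes with the generators of the $G$-action (which are right-invariant, see Proposition \ref{p-LR}) and is therefore a solution of \eqref{maineq}. The constrained FPD is, by Definition \ref{d-CPD}, the subset of these fields that also lie in $\Omega$, i.e. those with $H_i$-component identically zero. Intersecting therefore forces $c_i\equiv 0$ and leaves $u_i:=a_i$ and $v_i:=b_i$ as arbitrary jointly radial functions. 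Consequently the whole statement reduces to computing which functions are jointly radial.

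It remains to express jointly radial functions in the coordinates of the statement. By Proposition \ref{p-LieGroups}, a function on $SE(2)^N$ is jointly radial if and only if it depends only on the products $X_1^{-1}X_j$, $j=2,\dots,N$. I would compute these explicitly: writing $X_i=(R_{\theta_i},(x_i,y_i))$ in semidirect-product notation with group law $(R,b)(R',b')=(RR',Rb'+b)$, one finds
\begin{equation*}
X_1^{-1}X_j=\big(R_{\theta_j-\theta_1},\,R_{-\theta_1}(X_j-X_1)\big),
\end{equation*}
whose rotation angle is $\theta_j-\theta_1$ and whose translation part, in polar form $X_j-X_1=\rho_j(\cos\alpha_j,\sin\alpha_j)$, equals $\rho_j(\cos(\alpha_j-\theta_1),\sin(\alpha_j-\theta_1))$. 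Hence $X_1^{-1}X_j$ is determined precisely by the triple $(\rho_j,\alpha_j-\theta_1,\theta_j-\theta_1)$, so jointly radial functions are exactly the analytic functions of $\rho_2,\dots,\rho_N$, $\alpha_2-\theta_1,\dots,\alpha_N-\theta_1$, $\theta_2-\theta_1,\dots,\theta_N-\theta_1$. Substituting this into the description of $u_i,v_i$ from the previous step gives the claim.

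The conceptual step — recognizing the left-invariant structure so that Proposition \ref{p-LieGroups} can be invoked — is essentially free once the matrix identification is made, and the interaction with the constraint is trivial because $\Omega$ is spanned by a sub-collection of the left-invariant basis. The only place demanding care is the explicit reduction of $X_1^{-1}X_j$ to the invariant triple $(\rho_j,\alpha_j-\theta_1,\theta_j-\theta_1)$; in particular one must keep track of the rotation $R_{-\theta_1}$ acting on the difference $X_j-X_1$, which is exactly what couples the translational invariants $\alpha_j$ to the heading $\theta_1$ and produces the mixed arguments $\alpha_j-\theta_1$ rather than the plain $\alpha_j$.
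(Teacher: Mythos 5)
Your proposal is correct and follows essentially the same route as the paper's proof: identify the unicycle with $SE(2)$, invoke Proposition \ref{p-LieGroups} to write the unconstrained FPD as left-invariant fields (in the basis $F_i,G_i,H_i$, which matches the paper's $X_i\cdot l^1, X_i\cdot l^2, X_i\cdot l^3$) with jointly radial coefficients, and observe that the constraint $\Omega$ exactly kills the third component. The only difference is presentational: you carry out explicitly the computation of $X_1^{-1}X_j$ and its reduction to the invariants $(\rho_j,\alpha_j-\theta_1,\theta_j-\theta_1)$, a step the paper asserts directly from Proposition \ref{p-LieGroups}.
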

\begin{proof}  We first study FPD on $SE(2)$ without the constraint \eqref{unicycle}, that are completely characterized by Proposition \ref{p-LieGroups}. Indeed, the FPD is given by
\begin{equation}\label{e-FPD-unicycle}
\dot X_i=\sum_{j=1}^3 \psi_{ij}(X_1,\ldots,X_N)(X_i\cdot l^j),
\end{equation}
where $l^1,l^2,l^3$ is a basis of the Lie algebra, hence $X \cdot l^1,X \cdot l^2, X \cdot l^3$ is a basis of the left-invariant vector fields. Such a basis can be chosen as
\begin{equation}l^1=\left(\begin{array}{ccc}
0&0&1\\
0&0&0\\
0&0&0
\end{array}\right),\qquad l^2=\left(\begin{array}{ccc}
0&-1&0\\
1&0&0\\
0&0&0
\end{array}\right),\qquad l^3=\left(\begin{array}{ccc}
0&0&0\\
0&0&1\\
0&0&0
\end{array}\right).
\end{equation}

Moreover, the $\psi_{ij}(X_1,\ldots,X_N)$ are jointly radial. Again by Proposition \ref{p-LieGroups}, they are characterized by the fact that they are analytic functions of $\rho_2,\ldots,\rho_N$, $\alpha_2-\theta_1,\ldots,\alpha_N-\theta_1,\theta_2-\theta_1,\ldots,\theta_N-\theta_1$ only, where $Z_i:=(x_i-x_1,y_i-y_1)=\rho_i(\cos(\alpha_i),\sin(\alpha_i))$ for $i=2,\ldots,N$.

We now identify the Controlled Population Dynamics, by simply verifying which dynamics for the unicycle \eqref{unicycle} are of the form \eqref{e-FPD-unicycle}. Observe that \eqref{unicycle}  reads as $\dot X=u(t)F_1(X)+v(t)F_2(X)$ and that $F_1(X)=X\cdot l^1$, $F_2(X)=X\cdot l^2$. As a consequence, solutions of the unicycle \eqref{unicycle} are of the form \eqref{e-FPD-unicycle} if and only if $\psi_{i3}\equiv 0$. This proves the result.
\end{proof}

We now discuss examples. For $N=1$, controls $u,v$ need to be constant, hence providing circular trajectories (that we do not present here). We provide a single example for $N=2$ in Figure \ref{f-unicycle}, where $u_1=0.3\rho_2, v_1=-0.2\sin(\theta_1-\alpha_2), u_2=0.2\rho_2, v_2=-0.1 \rho_2$. With such choice, agent 1 (green) aims to reach agent 2 (blue) with the same angle. Moreover, the dynamics promotes convergence to an equilibrium, in which $\rho_2=0$.

 \begin{figure}[htb]%
\centering
\includegraphics[width=7cm]{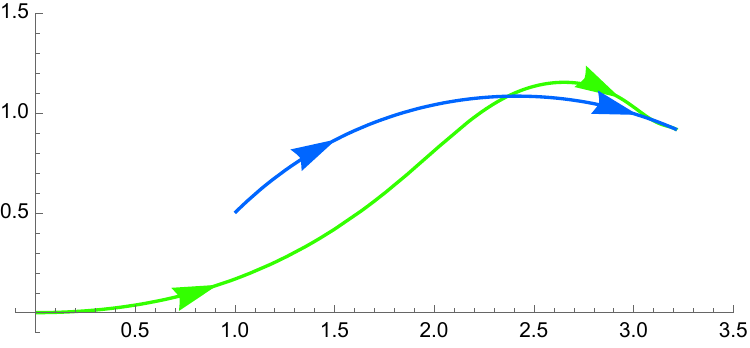}%
\caption{A Controlled Population Dynamics for the unicycle with $N=2$.}%
\label{f-unicycle}%
\end{figure}

%
%
%

\subsection{The relativistic unicycle\label{relatuni}}
In this section, we consider a unicycle in space-time, by adding the time variable $T$. The dynamics then takes place in $\R^3\times S^1$ and it can be written as follows:
\begin{equation}
\dot T=f(t),\qquad \dot x=\cos(\theta)u(t),\qquad \dot y=\sin(\theta)u(t),\qquad \dot \theta=v(t). \label{uni1}
\end{equation}

The natural group of equivariance for the variables $(T,x,y)$ only is the group $SO(1,2)\ltimes \R^3$, that we already studied in Section \ref{s-relat3}. No natural action of such group seems to take into account the non-holonomic angular constraint for $(x,y,\theta)$ in \eqref{uni1}.

A natural group of equivariance is $K=SO(1,1)\times SO(2,\R)$, acting as follows: by introducing polar coordinates for $(x,y)=\rho(\cos(\alpha),\sin(\alpha))$, the action is $(g,\omega).(T,\rho,\alpha,\theta)=(g.(T,\rho),\alpha+\omega,\theta+\omega) $, where $g\in SO(1,1)$ is written as $g=\left(
\begin{array}
[c]{cc}%
\cosh(\lambda) & \sinh(\lambda)\\
\sinh(\lambda) & \cosh(\lambda)
\end{array}
\right)  $. The action can be seen as the natural ``Lorentzian'' group action, as it preserves the metric $ds^{2}=dT^{2}-d\rho^{2} $. The rotation $\omega$ simultaneously acts on the variables $\alpha$ and $\theta$. We have the following result.

\begin{proposition} Consider the controlled FPD associated to the relativistic unicycle $(N,\R^3\times S^1,SO(1,1)\times SO(2,\R))$.

For $N=1$, the controlled FPD is composed by vector fields
\begin{equation}\label{e-rel-uni-1}
T\cos(\theta-\alpha)\bar{u}(r,\theta-\alpha)\partial_T%
+\rho\cos(\theta-\alpha)\bar{u}(r,\theta-\alpha)\partial_\rho
+\sin(\theta-\alpha)\bar{u}(r,\theta-\alpha)\partial_\alpha
+v(r,\theta-\alpha)\partial_\theta
\end{equation}
with $\bar u,v$ analytic functions and $r=\sqrt{T^2-\rho^2}$.

For $N\geq 2$, the FPD {\bf contains} vector fields $F=\sum_{i=1}^N F_i \partial_{X_i}$ with 
\begin{eqnarray}\label{e-rel-uni-N}
F_i&=&T_i\cos(\theta_i-\alpha_i)\bar{u}_i(X_1,\ldots,X_N)\partial_{T_i} 
+\rho_i\cos(\theta_i-\alpha_i)\bar{u}_i(X_1,\ldots,X_N)\partial_{\rho_i}
+\\
&&\sin(\theta_i-\alpha_i)\bar{u}_i(X_1,\ldots,X_N)\partial_{\alpha_i}
+\bar{v}_i(X_1,\ldots,X_N)\partial_{\theta_i},\nonumber
\end{eqnarray}
where $\bar{u}_i,\bar{v}_i$ are jointly radial functions of their variables.
\end{proposition}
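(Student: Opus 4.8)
The plan is to treat the two regimes separately: the single particle $N=1$ by directly integrating the commutation equations $[F,\hat l]=0$, and the case $N\ge 2$ by verifying the proposed fields are solutions and then invoking the module structure of Theorem \ref{t-FPD}.

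\textbf{Single particle.} First I would pass to the cylindrical coordinates $(T,\rho,\alpha,\theta)$ adapted to the action, in which the $u$-control field $\cos\theta\,\partial_x+\sin\theta\,\partial_y$ becomes $\cos(\theta-\alpha)\,\partial_\rho+\tfrac{\sin(\theta-\alpha)}{\rho}\,\partial_\alpha$, and the Lie algebra $\KK$ of $K=SO(1,1)\times SO(2,\R)$ is generated by the boost $W_1=\rho\,\partial_T+T\,\partial_\rho$ and the simultaneous rotation $W_2=\partial_\alpha+\partial_\theta$. A general admissible field is $F=f\,\partial_T+u\bigl(\cos\beta\,\partial_\rho+\tfrac{\sin\beta}{\rho}\,\partial_\alpha\bigr)+v\,\partial_\theta$, with $\beta:=\theta-\alpha$ and $f,u,v$ functions of the state. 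Imposing $[F,W_2]=0$ amounts to requiring $(\partial_\alpha+\partial_\theta)$ to annihilate every component of $F$; since $\beta$ and $\rho$ are already invariant, this forces $f,u,v$ to depend on $\alpha,\theta$ only through $\beta$.

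\textbf{Integration of the boost condition.} Next I would impose $[F,W_1]=0$ and read off its four scalar components. The $\partial_\theta$-component gives $W_1(v)=0$, so $v=v(r,\beta)$ with $r=\sqrt{T^2-\rho^2}$; the $\partial_\alpha$-component collapses (after factoring out $\sin\beta$) to the transport equation $W_1(u)=\tfrac{T}{\rho}u$; the $\partial_\rho$-component yields $f=\cos\beta\,W_1(u)$; and the $\partial_T$-component reads $u=W_1^2(u)$, which one checks is \emph{automatically} implied by the transport equation, using $W_1(T/\rho)=-r^2/\rho^2$ and $W_1(u)=\tfrac{T}{\rho}u$, so that $W_1^2(u)=\tfrac{T^2-r^2}{\rho^2}u=u$. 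Solving the transport equation — noting that $u=\rho$ is a particular solution and that the quotient of any two solutions is $W_1$-invariant, hence (together with the $\beta$-dependence already fixed) a function of $(r,\beta)$ — gives $u=\rho\,\bar u(r,\beta)$ and then $f=T\cos\beta\,\bar u$, which is exactly \eqref{e-rel-uni-1}.

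\textbf{Several particles.} For $N\ge 2$ the claim is only an inclusion, so it suffices to verify that the proposed fields are PDs lying in $\Omega$. I would set $P_i:=T_i\cos\beta_i\,\partial_{T_i}+\rho_i\cos\beta_i\,\partial_{\rho_i}+\sin\beta_i\,\partial_{\alpha_i}$ with $\beta_i:=\theta_i-\alpha_i$, the other building block being $\partial_{\theta_i}$. Since the coefficients of $P_i$ and $\partial_{\theta_i}$ involve only agent $i$'s variables and differentiate only in agent-$i$ directions, while $\hat W_1=\sum_k(\rho_k\partial_{T_k}+T_k\partial_{\rho_k})$ and $\hat W_2=\sum_k(\partial_{\alpha_k}+\partial_{\theta_k})$ split over agents, the brackets $[P_i,\hat W_1]$ and $[P_i,\hat W_2]$ reduce to the single-agent brackets already shown to vanish (the case $\bar u=1,\ v=0$ above), and likewise for $\partial_{\theta_i}$; hence $P_i$ and $\partial_{\theta_i}$ are solutions, each manifestly in $\Omega$. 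By the module structure of the FPD over the ring of jointly radial functions (Theorem \ref{t-FPD}), the field $\sum_i(\bar u_i P_i+\bar v_i\,\partial_{\theta_i})$ is again a PD for arbitrary jointly radial $\bar u_i,\bar v_i$, and each component remains in $\Omega$; this is precisely \eqref{e-rel-uni-N}.

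\textbf{Main obstacle.} The only genuine work lies in the single-particle integration: recognizing that the $\partial_T$-component is redundant modulo the transport equation, and that the transport equation pins $u$ down to $\rho$ times a radial function. Throughout one must restrict to the open dense $K$-invariant domain where $\rho>0$ and $T^2>\rho^2$, so that $r$, the polar angle, and the quotient coordinates are well defined; the $N\ge 2$ verification is then routine once the module structure is in hand.
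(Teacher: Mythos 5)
Your proof is correct and takes essentially the same route as the paper's: cylindrical coordinates $(T,\rho,\alpha,\theta)$, the $SO(2,\R)$ factor forcing dependence on $\theta-\alpha$ only, and the $SO(1,1)$ commutation condition yielding exactly the radial structure $u=\rho\,\bar u(r,\theta-\alpha)$, $f=T\cos(\theta-\alpha)\,\bar u$; for $N\ge 2$ both arguments amount to verifying that the displayed fields are solutions lying in $\Omega$. The only differences are presentational: the paper outsources the boost computation to Proposition \ref{p-relat} (the $SO(1,2)$ case with $N=2$) rather than integrating the transport equation inline, and leaves the module-structure step of Theorem \ref{t-FPD} implicit for $N\ge 2$, so your write-up is simply more self-contained.
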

\begin{proof} We first focus on $N=1$. We use polar coordinates for $(x,y)=\rho(\cos(\alpha),\sin(\alpha))$. The system reads as 
$$\dot T=f(T,\rho,\alpha,\theta),\qquad \dot \rho=\cos(\theta-\alpha)u(T,\rho,\alpha,\theta),\qquad \dot \alpha=\frac1\rho \sin(\theta-\alpha)u(T,\rho,\alpha,\theta),\qquad \dot \theta=v(T,\rho,\alpha,\theta).$$
\label{e-uni-rel}
The action of $SO(2,\R)$ implies that the dynamics depends on the variables $T,\rho,\theta-\alpha$ only, thus $f,u,v$ only depend on them.

The action of $SO(1,1)$ implies two conditions: first, that functions $f,u,v$ depend on the variables $r,\theta-\alpha$ only, with $r=\sqrt{T^2-\rho^2}$. Second, it implies $f(T,\rho,\theta-\alpha)=\psi(r,\theta-\alpha) T$ and $\cos(\theta-\alpha)u(T,\rho,\theta-\alpha)=\psi(r,\theta-\alpha)\rho$ for an analytic function $\psi(r,\theta-\alpha)$. Computations are identical to the proof of Proposition \ref{p-relat} with $N=2$. We now write $\psi(r,\theta-\alpha)=\cos(\theta-\alpha)\bar u(r,\theta-\alpha)$ for an analytic function $\bar u(r,\theta-\alpha)=\frac{u(t)}{\rho}$ and find \eqref{e-rel-uni-1}.

For $N\geq 2$, it is easy to prove that $F$ defined by \eqref{e-rel-uni-N} are PD, with the same computations.
\end{proof}

\section{Population Dynamics for quantum agents}\label{s-quantum}
In this section, we discuss the problem of joint equivariance of systems of quantum particles. In the standard formalism of quantum theory (see e.g. \cite{hall}), particles are described by (square roots of) probability densities. We consider a finite-dimensional dynamics, i.e. $n\in \mathbb{N}$ possible states. The Schr\"{o}dinger equation describing the evolution of a single particle is then:%
\begin{equation}
\dot{\Psi}=A\Psi,\label{schro}%
\end{equation}
where $A$ is a skew-adjoint operator over $\mathbb{C}^{n}\cong l^{2}(\{1,\ldots,n\},\mathbb{C})$. Here we have $<\Psi,\bar{\Psi}>=1$ and the probability that the particle is in the state $k\in\{1,\ldots,n\}$ is $p_{k}=\Psi_{k}\bar{\Psi}_{k}$.

We start by considering $N$ non-interacting particles, indexed by $i\in\{1,\ldots,N\}$. The particle $i$ has $n_i$ possible states, the quantum state is denoted by $\Psi_i$ and the dynamics is given by a matrix $A_i$.
It is clear that the state of the whole system is $X=\Psi_{1}\otimes\Psi_{2}\otimes\ldots\otimes\Psi_{N}\in\mathbb{C}^{n_{1}}\otimes\ldots\otimes\mathbb{C}^{n_{N}}$, since the probability of $X$ to
be in the state $(k_{1},\ldots,k_{N})$ is just $p_{k_{1}}p_{k_{2}}\ldots p_{k_{N}}$. Its evolution is indeed given by
\begin{eqnarray*}
X(t)  & =&\Psi_{1}(t)\otimes\Psi_{2}(t)\otimes\ldots\otimes\Psi_{N}
(t)\label{evolution}\\
& =&(e^{tA_{1}}\otimes\ldots\otimes e^{tA_{N}})(\Psi_{1}(0)\otimes\Psi_{2}%
(0)\otimes\ldots\otimes\Psi_{N}(0)).
\end{eqnarray*}

By computing the corresponding infinitesimal generator $\mathcal{A}$, we have
\begin{eqnarray*}
\mathcal{A}  & =&A_{1}\otimes \Id_{n_{2}}\otimes\ldots\otimes \Id_{n_{N}} +\Id_{n_1}\otimes A_{2}\otimes\ldots\otimes \Id_{n_{N}} +\Id_{n_{1}}\otimes\ldots\otimes \Id_{n_{N-1}}\otimes A_{N},
\end{eqnarray*}
where $\Id_n$ is the identity matrix of dimension $n$. The key point is then the following: while in classical kinematic systems the joint dynamics occurs on the direct sum of the state spaces, here the natural context is the tensor product of the individual state spaces.

Assume now that all particles lie in the same state space, that is then $\mathbb{C}^n$ for a common dimension $n\in\mathbb{N}$. The state space for the whole system is then $(\mathbb{C}^{n})^{\otimes N}\simeq \mathbb{C}^{(n^N)}$.  Let $G$ be a symmetry group acting on $\mathbb{C}^n$, and denote by $\LL$ its Lie algebra. Then, the infinitesimal joint action on $(\mathbb{C}^{n})^{\otimes N}$ for $l \in\LL$ is
\begin{eqnarray*}
\mathcal{A}  &=&l\otimes \Id_{n}\otimes\ldots\otimes \Id_{n} +\Id_{n}\otimes l\otimes\ldots\otimes \Id_{n}  +\Id_{n}\otimes\ldots\otimes \Id_{n}\otimes l.
\end{eqnarray*}

This general idea is now developed on a specific case.

\subsection{Family of Population Dynamics for two quantum agents in $\C^2$} 

In this section, we present a simple case for the general theory of PDs with quantum symmetries. We focus on FPD of $N=2$ agents with state $\Psi=\Psi_1\otimes \Psi_2$ evolving on $\C^2\otimes \C^2$ under the $SU(2)$ symmetry. This is one of the most important symmetries in quantum physics, the symmetry of quantum spins. We require the dynamics to be equivariant with respect to its diagonal action, i.e. with respect to the action 
\begin{equation}
\label{e-action-SU2}
g.(\Psi_1\otimes\Psi_2)=g\Psi_1\otimes g\Psi_2.
\end{equation}

The Lie algebra is then $\LL=su(2)$. We want to characterize vector fields $F$ satisfying
\begin{equation}
\lbrack F(\Psi),(l\otimes \Id_{2})\Psi+(\Id_{2}\otimes l)\Psi]=0\qquad \mbox{for all ~~~}%
l\in su(2).\label{maineqqQuant}%
\end{equation}

From now on, we use the following identification: $\C^2\otimes \C^2=\mathcal{M}_{2}(\mathbb{C})$, i.e. the space of quadratic forms on $\C$. We first observe that the action \eqref{e-action-SU2} now reads as 
\begin{equation}
\label{e-action-M2}
g.X=gXg^T,
\end{equation} where $X\in \mathcal{M}_2(\C)$ and $g^T$ is the transpose of $g\in SU(2)$. Remark that the action \eqref{e-action-M2} is reducible. Indeed, by using the Clebsch-Gordan decomposition \cite{hall}, the unitary {\bf diagonal} action of $SU(2)$ over $\mathcal{M}_{2}(\mathbb{C})$ is (unitarily equivalent to) the direct sum of two irreducible components: the (natural) action of $SU(2)$ on symmetric and skew-symmetric matrices, respectively. This is the \textbf{4=3+1} of physicists.
\begin{remark} We stress here that we deal with (standard) transposition of matrices, and not on the conjugate transposition. This explains the interest of the Takagi decomposition used below.
\end{remark}

The structure of skew-symmetric matrices in $\mathcal{M}_{2}(\mathbb{C})$ is very simple, as they satisfy $S=a J$ for some $a\in\C$ and $J$ is given by \eqref{e-J}. For symmetric matrices, we need the following useful decomposition.

 \begin{proposition}[Takagi decomposition \cite{takagi}] Any complex symmetric matrix $S$ of dimension $2\times 2$ satisfies the following decomposition: there exists $U\in U(2)$ unitary matrix such that 
\[
S=U\Delta U^T\qquad \mbox{~~with~~}\qquad\Delta=\left(
\begin{array}
[c]{cc}%
\delta_{1} & 0\\
0 & \delta_{2}%
\end{array}
\right).
\]
Here, $\delta_{1},\delta_{2}\geq0$ are the singular values of $S$.

As a consequence, there exists $\omega$ phase factor and $H\in SU(2)$ such that 
\begin{equation}
S=e^{i\omega}H\Delta H^T.\label{e-takagi}
\end{equation}
Moreover, for $\delta_{1}\neq\delta_{2}$, the parameters $(\Delta,\omega,H)$ are uniquely determined by $S$.

As a further consequence, define $\mathcal{D}_{S}$ the set of complex symmetric
matrices with distinct singular values, that is an open dense $SU(2)$-space, i.e. invariant under the action of $SU(2)$. For all $S\in\mathcal{D}_S$, the Takagi invariants $(\Delta,\omega)$ form a complete set of invariants under the action \eqref{e-action-M2}. Moreover, these invariants smoothly depend on
$S$.
\end{proposition}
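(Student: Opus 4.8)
The plan is to obtain the factorization from the spectral theory of the Hermitian matrix $S\bar S$, and then read off all the consequences by bookkeeping the phases and the residual freedom in the diagonalizing unitary.

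\textbf{Existence.} Since $S$ is symmetric, $S^{*}=\bar S$, so $S\bar S=SS^{*}$ is Hermitian and positive semidefinite, and its eigenvalues are exactly the squares $\delta_1^2,\delta_2^2$ of the singular values of $S$. I would diagonalize $SS^{*}=U\,\Delta^{2}\,U^{*}$ with $U\in U(2)$ and $\Delta^2=\mathrm{diag}(\delta_1^2,\delta_2^2)$, and then study $T:=U^{*}S\bar U$. A direct check using $UU^{*}=\Id$ gives $T^{T}=T$ and $T\bar T=U^{*}(SS^{*})U=\Delta^{2}$. Writing $T=\bigl(\begin{smallmatrix}a&b\\ b&c\end{smallmatrix}\bigr)$, the off-diagonal entry of $T\bar T$ is $a\bar b+b\bar c=0$; when $\delta_1\neq\delta_2$, taking moduli forces $|a|=|c|$ if $b\neq0$, which contradicts $|a|^{2}+|b|^{2}=\delta_1^{2}\neq\delta_2^{2}=|b|^{2}+|c|^{2}$. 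Hence $b=0$, $T$ is diagonal with $|a|=\delta_1$, $|c|=\delta_2$, and absorbing the phases of $a,c$ into the columns of $U$ (replacing $U$ by $U\,\mathrm{diag}(e^{-i\alpha/2},e^{-i\gamma/2})$) turns $T$ into $\Delta$, i.e. $S=U\Delta U^{T}$. The degenerate case $\delta_1=\delta_2$ reduces to the fact that a symmetric unitary matrix is of the form $UU^{T}$, or one simply invokes the classical Autonne--Takagi theorem \cite{takagi}.

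\textbf{Phase form and uniqueness.} Given $U\in U(2)$, I set $\omega:=\arg\det U$ and $H:=e^{-i\omega/2}U$; then $\det H=e^{-i\omega}\det U=1$, so $H\in SU(2)$, and since $\Delta$ is real, $S=U\Delta U^{T}=e^{i\omega}H\Delta H^{T}$, which is \eqref{e-takagi}. For uniqueness when $\delta_1\neq\delta_2$: the ordered singular values fix $\Delta$; the eigenspaces of $SS^{*}$ are one-dimensional, so $U$ is determined up to right multiplication by a diagonal unitary, and the normalization $U^{*}S\bar U=\Delta\geq0$ pins the two phases up to independent signs. Taking determinants in \eqref{e-takagi} gives $\det S=e^{2i\omega}\delta_1\delta_2$, so $\omega$ is determined modulo $\pi$ (when $\delta_1\delta_2\neq0$), and $H$ is then determined up to the sign $H\mapsto-H$; I would record uniqueness in this precise ``up to the evident $\mathbb{Z}_2$'' form.

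\textbf{Invariance, completeness, openness, smoothness.} For $g\in SU(2)$ the action \eqref{e-action-M2} gives $g.S=gSg^{T}=e^{i\omega}(gH)\Delta(gH)^{T}$ with $gH\in SU(2)$, so $(\Delta,\omega)$ are constant along orbits. Conversely, if $S=e^{i\omega}H\Delta H^{T}$ and $S'=e^{i\omega}H'\Delta H'^{T}$ share the same $(\Delta,\omega)$, then $g:=H'H^{-1}\in SU(2)$ satisfies $g.S=e^{i\omega}H'\Delta\bigl(H^{T}(H^{T})^{-1}\bigr)H'^{T}=e^{i\omega}H'\Delta H'^{T}=S'$, so $(\Delta,\omega)$ is a complete set of invariants. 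For $\mathcal{D}_S$, the locus $\{\delta_1=\delta_2\}$ is the zero set of the discriminant of the characteristic polynomial of $SS^{*}$, a proper closed subset, so $\mathcal{D}_S$ is open dense; it is $SU(2)$-invariant (indeed $U(2)$-invariant) because $(gSg^{T})(gSg^{T})^{*}=g\,SS^{*}g^{*}$ (using $g^{T}\bar g=\Id$ for $g$ unitary), which is unitarily similar to $SS^{*}$ and hence has the same singular values. Finally, on $\mathcal{D}_S$ the simple eigenvalues $\delta_1^{2}\neq\delta_2^{2}$ depend analytically on $S$, giving smoothness of $\Delta$, while $e^{2i\omega}=\det S/(\delta_1\delta_2)$ gives smoothness of $\omega$ wherever $\delta_1\delta_2\neq0$.

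The main obstacle is the careful bookkeeping of the residual freedom in $U$: the argument that $T=U^{*}S\bar U$ is forced to be diagonal uses the distinct-singular-value hypothesis in an essential way, and making uniqueness precise requires tracking how the eigenvector sign/phase choices propagate into $\det U$ (hence into $\omega$) and into $H$. This is also exactly why the invariant statements are asserted only on $\mathcal{D}_S$: when $\delta_1=\delta_2$ neither $\omega$ nor $H$ is determined, and when $\delta_2=0$ the phase $\omega$ becomes vacuous (it can be absorbed into a column of $H$) so that $\Delta$ alone is a complete invariant and the smoothness of $\omega$ is claimed only where $\det S\neq0$.
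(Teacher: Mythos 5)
Your proof is correct, but it takes a genuinely different and much more self-contained route than the paper. The paper's proof is essentially a citation: it invokes \cite{takagi} for the factorization $S=U\Delta U^{T}$, checks that the $\delta_{i}$ are the singular values via $S^{*}S=(U^{T})^{*}\Delta^{2}U^{T}$, and obtains \eqref{e-takagi} by writing $U=e^{i\alpha}H$ with $H\in SU(2)$ and $\omega=2\alpha$; uniqueness, completeness of the invariants, and the openness/density/smoothness assertions are left to the reference and the reader. You instead prove existence from scratch by the spectral theorem for $S\bar S=SS^{*}$, with the key step that distinct singular values force $T=U^{*}S\bar U$ to be diagonal (your phase absorption then yields $\Delta$; the exponents should be $e^{+i\alpha/2},e^{+i\gamma/2}$ in your convention, an inessential sign), and you supply exactly the verifications the paper omits: orbit-invariance and completeness of $(\Delta,\omega)$ via $g.S=e^{i\omega}(gH)\Delta(gH)^{T}$ and $g=H'H^{-1}$, invariance of $\mathcal{D}_{S}$ from $(gSg^{T})(gSg^{T})^{*}=gSS^{*}g^{*}$, and open density from the discriminant. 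Beyond self-containedness, your bookkeeping buys precision on two points where the proposition as literally stated is too strong, and your corrections are right: $(\Delta,\omega,H)$ is never strictly unique, since $H\mapsto -H$ preserves the decomposition and, when $\delta_{1}\delta_{2}\neq 0$, so does $(\omega,H)\mapsto(\omega+\pi,\,H\,\mathrm{diag}(i,-i))$, so uniqueness holds only up to this finite ambiguity; and the invariants are smooth only off the locus $\det S=0$ --- for $S=\mathrm{diag}(1,t)$ with $t\in\R$ one has $\delta_{2}=|t|$ and $\omega$ jumping from $0$ to $\pi/2$ across $t=0$ --- which matches your restriction of the smoothness claim to $\delta_{1}\delta_{2}\neq 0$. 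The paper's brevity is defensible given the citation, but your argument is the one a reader can check line by line, and it records the caveats actually needed when $(\omega(S),\delta_{1}(S),\delta_{2}(S))$ are used as invariants in the subsequent proposition.
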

\begin{proof} See \cite{takagi}. It is clear that $\delta_1,\delta_2$ are the singular values, since $$S^* S=(U^T)^* \Delta U^* U \Delta U^T=(U^T)^* \Delta^2 U^T.$$

The passage from $U\in U(2)$ to $H\in SU(2)$ is also direct, since any unitary matrix satisfies $U=\exp(i\alpha) H$ with $\alpha\in \R/(2\pi)$. Thus, equation \eqref{e-takagi} holds with $\omega=2\alpha$.
\end{proof}

We are now ready to describe jointly radial functions. The idea here is to use the (unique) Takagi decomposition of the symmetric part of $X$ to define invariants.

\begin{proposition} Let $\mathcal{D}$ be the set of matrices $X$ in $\mathcal{M}_{2}(\mathbb{C})$ such that the symmetric part $S$ of $X$ has distinct singular values. The following holds:
\begin{itemize}
\item the set $\mathcal{D}$ is a $G$-space, i.e. open, dense and invariant under the diagonal action of $SU(2)$;
\item by writing $X=S+A$ and applying the Takagi decomposition \eqref{e-takagi} to $S$, it holds that the set of 5 real parameters $(\omega(S),\delta_{1}(S),\delta_{2}(S),A)$ is a complete set of independent invariants.
\end{itemize}
As a consequence, the ring $\mathcal{R}$ of jointly radial functions with domain $\mathcal{D}$ is the space of functions of such parameters.
\end{proposition}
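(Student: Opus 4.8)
The plan is to exploit the splitting of $\mathcal{M}_2(\C)$ into symmetric and skew-symmetric matrices, which is precisely the Clebsch--Gordan decomposition $\mathbf{4}=\mathbf{3}+\mathbf{1}$ recalled above, and to treat the two pieces separately. The first point to record is that the decomposition $X=S+A$ is preserved by the action \eqref{e-action-M2}: since $(gXg^T)^T=gX^Tg^T$, the symmetric part of $g.X$ is $gSg^T$ and its skew part is $gAg^T$. Thus the orbit of $X$ is governed independently by the orbit of $S$ and that of $A$, and I can invoke the Takagi proposition for the symmetric factor while handling the skew factor by hand.

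For the first bullet, invariance of $\mathcal{D}$ follows because the singular values of $S$ are unchanged under $S\mapsto gSg^T$ with $g$ unitary: one computes $(gSg^T)^*(gSg^T)=\bar g\,(S^*S)\,g^T=(g^T)^{-1}(S^*S)\,g^T$, which is similar to $S^*S$ and hence has the same spectrum, and so the same singular values. Openness is clear, as the singular values depend continuously on $S$, so $\delta_1\neq\delta_2$ is an open condition. Density holds because the exceptional locus $\{\delta_1=\delta_2\}$ is the zero set of the discriminant $\mathrm{tr}(S^*S)^2-4\det(S^*S)$, a nontrivial real-analytic function of $X$, hence nowhere dense.

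For the second bullet, the decisive observation is that the skew part is a \emph{total} invariant. Writing $A=aJ$ with $a\in\C$ and using the $2\times2$ identity $gJg^T=\det(g)\,J$ together with $\det g=1$ for $g\in SU(2)$, I get $gAg^T=A$. Therefore the $SU(2)$-orbit of $X=S+A$ is $\{gSg^T:g\in SU(2)\}+A$, so $X,X'\in\mathcal{D}$ lie in the same orbit if and only if $A=A'$ and $S'=gSg^T$ for some $g$; by the uniqueness clause of the Takagi proposition (valid since $\delta_1\neq\delta_2$) the latter is equivalent to $(\omega,\delta_1,\delta_2)=(\omega',\delta_1',\delta_2')$, which proves completeness. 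For independence I count dimensions: the two real parameters $\mathrm{Re}\,a,\mathrm{Im}\,a$ are free and unconstrained by $S$, while the Takagi data $(\omega,\delta_1,\delta_2)$ range over an open subset of $\R^3$; since $\mathcal{D}$ is open in $\R^8$ with finite stabilizers (again by Takagi uniqueness) and hence $3$-dimensional orbits, the five invariant functions have $5$-dimensional image and are functionally independent, realizing the quotient $\mathcal{D}/SU(2)$.

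The concluding identification of $\RR$ is then formal. By the definition of radial function, a jointly radial function is an analytic function on $\mathcal{D}$ that is constant on the $SU(2)$-orbits; completeness of the invariants forces any such function to factor through $X\mapsto(\omega(S),\delta_1(S),\delta_2(S),A)$, while conversely every function of these parameters is orbit-constant since each parameter is invariant. The smoothness of the Takagi data asserted in the Takagi proposition guarantees that analytic radial functions correspond exactly to analytic functions of the five parameters, identifying $\RR$ as claimed. I expect the main difficulty to lie in the independence/completeness bookkeeping: making precise that the five invariants parametrize the full $5$-dimensional quotient without redundancy, which hinges on the uniqueness part of the Takagi decomposition for distinct singular values and on the resulting finiteness of the stabilizers.
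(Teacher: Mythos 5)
Your proposal is correct and follows essentially the same route as the paper: both split $X=S+A$ along the Clebsch--Gordan decomposition, observe that the skew-symmetric part is fixed pointwise by the diagonal action (via $gJg^{T}=\det(g)\,J=J$ for $g\in SU(2)$), and invoke the uniqueness clause of the Takagi decomposition on the symmetric part. If anything, your treatment of completeness is tighter than the paper's: you prove orbit separation directly (same orbit iff $A=A'$ and the Takagi data of $S,S'$ agree), whereas the paper deduces completeness from the dimension count $8-3=5$ — the same count you instead reserve, appropriately, for the independence claim.
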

\begin{proof} The first result is a direct consequence of the fact that the diagonal action of $SU(2)$ on $\mathcal{M}_{2}(\mathbb{C})$ is decomposed into the actions on the symmetric and skew-symmetric part. Thus, if $S$ has distinct singular values, it belongs to $\mathcal{D}_S$, that is $G$-invariant.

As for the second result, it is clear that $\omega(S),\delta_{1}(S),\delta_{2}(S)$ are invariants, as a consequence of the Takagi decomposition. Moreover, the matrix $H\in SU(2)$ is unique, thus we can apply it to the matrix $X=S+A$, thus in particular to its skew-symmetric part $A=a J$. A direct computation shows that it holds $H A H^T=A$, thus the  parameter $a\in \C$ is invariant under the group action. By considering the dimension over the space $\R$, we then have the 5-dimensional set of parameters $(\omega(S),\delta_{1}(S),\delta_{2}(S),a)$ for the 8-dimensional space $\mathcal{M}_{2}(\mathbb{C})$, under the action of the 3-dimensional group $SU(2)$. Thus, the set of parameters is complete.
\end{proof}

We are now ready to describe some PDs for $(N=2,\C^2\otimes \C^2,SU(2))$. Due to the identification $\C^2\otimes \C^2=\mathcal{M}_{2}(\mathbb{C})$, we need to rewrite the Lie algebra condition \eqref{maineqqQuant}. By computing the infinitesimal generator of \eqref{e-action-M2}, it reads as $l.X+X.l^T$ for some $l\in su(2)$, thus condition \eqref{maineqqQuant} reads as 
\begin{equation}
[F(X),l.X+X.l^T]=0\qquad \mbox{for all ~~~} l\in su(2).\label{e-su2}%
\end{equation}
We then aim to describe all vector fields $F$ satisfying condition \eqref{e-su2}. It is clear that the dynamics on $\mathcal{M}_{2}(\mathbb{C})$ can also be decomposed in the symmetric and skew-symmetric parts. Thus, the infinitesimal generator of the group action $(l.X+X.l^T)\partial_X$ now reads as $(l.S+S.l^{T})\partial_S+(l.A-A.l^{T})\partial_A$. A general vector field now reads as $F(S,A)=f_{1}(S,A)\partial_S+f_{2}(S,A)\partial_A$. Condition \eqref{e-su2} splits as follows: 
\begin{eqnarray}
&&f_1 (lS+Sl^T)-(lS+Sl^T)\partial_Sf_1-(lA-Al^T)\partial_Af_1=0\label{e-S}\\
&&f_2 (lA+Al^T)-(lS+Sl^T)\partial_Sf_2-(lA-Al^T)\partial_Af_2=0\label{e-A}
\end{eqnarray}
These equations play the same role as \eqref{maineq1} for linear actions on Euclidean spaces. Even though the equations are different, they share a common important feature: solutions of \eqref{e-S} are a module over the ring $\mathcal{R}$ of jointly radial functions, and the same holds for solutions of \eqref{e-A}. The proof is identical to Theorem \ref{t-FPD}.

We are unable to provide all solutions to \eqref{e-S}, while completely solving \eqref{e-A} is easy. We are interested in the dynamics preserving the unit ball of $\C^2\otimes\C^2$. Our result is stated here.
\begin{proposition} \label{p-quantum} Consider the FPD associated to  $(N=2,\C^2\otimes \C^2,SU(2))$, i.e. to  $(N=2,\mathcal{M}_2(\C),SU(2))$ preserving the unit ball of $\C^2\otimes\C^2$. Decompose $X\in\mathcal{M}_2(\C)$ as $X=S+A$, its symmetric and skew-symmetric part, respectively. Then, the FPD {\it contains} the following vector fields:
$$F(X)= i\phi_{1}(S,A) S\partial_S+i\phi_{2}(S,A) A \partial_A$$
 for any real-valued jointly-radial functions $\phi_1,\phi_2\in \mathcal{R}$. Moreover, all solutions $f_2$ of \eqref{e-A} are of the form $i\phi_2(S,A) A$.

\end{proposition}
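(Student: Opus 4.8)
The plan is to prove the two assertions separately — containment of the exhibited fields, and the complete description of the skew-symmetric component $f_2$ — and both will rest on a single structural observation already encoded in the Clebsch--Gordan splitting $\mathbf{4}=\mathbf{3}+\mathbf{1}$: the skew-symmetric block is the $SU(2)$-\emph{singlet}, so the group acts trivially on it. Concretely, writing any skew-symmetric $A\in\mathcal{M}_2(\C)$ as $A=\alpha J$ with $J$ as in \eqref{e-J}, we have $gAg^T=\alpha(\det g)J=A$ for $g\in SU(2)$, and infinitesimally $lA+Al^T=0$ for every $l\in su(2)$. This last identity is the purely algebraic fact $J\,l^T J^{-1}=\mathrm{adj}(l)=-l$, which holds because each $l\in su(2)$ is traceless; equivalently $Jl^T=-lJ$, whence $lJ+Jl^T=0$. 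This triviality is exactly what makes the $A$-equation tractable while the $S$-equation is not.

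For the containment I would first invoke the module structure of the solution spaces of \eqref{e-S} and \eqref{e-A} over the ring $\mathcal{R}$ of jointly radial functions (established as in Theorem \ref{t-FPD}): it then suffices to verify that the two generators $f_1=iS$ and $f_2=iA$ solve \eqref{e-S} and \eqref{e-A} respectively, after which $i\phi_1 S$ and $i\phi_2 A$ are solutions for all $\phi_1,\phi_2\in\mathcal{R}$. For $f_2=iA$ every term drops out, since the algebraic term is $i(lA+Al^T)=0$, there is no $S$-dependence, and the transport term carries the vanishing generator on the skew block. For $f_1=iS$ the inhomogeneous (algebraic) term is reproduced exactly by the $S$-transport term, because $\partial_S(iS)$ contracted with $lS+Sl^T$ gives $i(lS+Sl^T)$, so the two cancel. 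To see preservation of the unit ball I would use that symmetric and skew-symmetric matrices are orthogonal for the Hilbert--Schmidt product, so that $\|X\|^2=\|S\|^2+\|A\|^2$; reality of $\phi_1,\phi_2$ then yields $\tfrac{d}{dt}\|S\|^2=2\phi_1\,\mathrm{Re}\langle iS,S\rangle=0$ and likewise for $A$.

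For the classification of $f_2$, the triviality of the $A$-action collapses \eqref{e-A} entirely. Writing the skew-symmetric-valued unknown as $f_2=a(S,A)J$, the algebraic term equals $a(lJ+Jl^T)=0$ and the $A$-transport term vanishes as well, so \eqref{e-A} reduces to $(lS+Sl^T)\,\partial_S a=0$ for all $l\in su(2)$. This says precisely that $a$ is annihilated by the infinitesimal $SU(2)$-action on the symmetric variable, with unconstrained dependence on $A$; hence $a$ is jointly radial. Since the coordinate $\alpha$ of $A$ is itself $SU(2)$-invariant, the ratio $c:=a/\alpha$ is jointly radial on the open dense set $\{A\neq 0\}$, and $f_2=aJ=cA$. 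Finally I would impose preservation of the unit ball on the skew block: from $\dot A=cA$ one gets $\tfrac{d}{dt}\|A\|^2=2\,\mathrm{Re}(c)\,\|A\|^2$, forcing $\mathrm{Re}\,c=0$, i.e. $c=i\phi_2$ with $\phi_2=\mathrm{Im}(c)\in\mathcal{R}$ real-valued. This gives $f_2=i\phi_2 A$, as claimed.

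The genuinely easy half is $f_2$, precisely because the skew part is the singlet and its equivariance equation degenerates to a radiality condition. The main obstacle — and the reason only \emph{containment} is asserted for the symmetric component — is equation \eqref{e-S}: there $SU(2)$ acts through the nontrivial three-dimensional irreducible representation on symmetric matrices, so the associated equivariance PDE does not decouple into a pointwise radiality statement and cannot be integrated by the elementary argument used for $f_2$; this is why no complete description of $f_1$ is available.
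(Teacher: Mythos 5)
Your containment argument is correct and is essentially the paper's (verify that $iS$ solves \eqref{e-S} and $iA$ solves \eqref{e-A}, then invoke the $\mathcal{R}$-module structure), and your classification of the skew component up to the statement $f_2=cA$ with $c$ jointly radial is also correct, but reached by a genuinely different route: the paper applies the spanning/free-module argument of Theorem \ref{t-FPD} (the skew-symmetric block is a $2$-real-dimensional manifold and the exhibited solutions span its tangent space at each point of an open dense set), whereas you use the singlet structure --- $lJ+Jl^{T}=\mathrm{tr}(l)\,J=0$ for $l\in su(2)$ --- to collapse \eqref{e-A} to the radiality condition $(lS+Sl^{T})\,\partial_{S}a=0$ for the coefficient $a$ in $f_2=aJ$, and then divide by the invariant coordinate $\alpha$ of $A=\alpha J$. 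Your route is more self-contained and makes transparent exactly why \eqref{e-S} cannot be handled the same way.

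The gap is in your very last step. You claim that preservation of the unit ball forces $\mathrm{Re}(c)=0$ ``on the skew block''. But ball preservation only constrains the derivative of the \emph{total} norm $\|X\|^{2}=\|S\|^{2}+\|A\|^{2}$ along the flow; it does not make $\|A\|^{2}$ separately conserved, and cross-block cancellation genuinely occurs. For instance,
$$F=\|A\|^{2}\,S\,\partial_{S}-\|S\|^{2}\,A\,\partial_{A}$$
is jointly equivariant ($S$ and $A$ solve \eqref{e-S} and \eqref{e-A} by the same computation as for $iS$ and $iA$, and $\|A\|^{2},\|S\|^{2}$ are real-valued jointly radial functions, so the module structure applies), and it preserves the Hilbert--Schmidt norm exactly, since $\tfrac{d}{dt}\|X\|^{2}=2\|A\|^{2}\|S\|^{2}-2\|S\|^{2}\|A\|^{2}=0$; yet its skew component $-\|S\|^{2}A$ is a \emph{real}, not purely imaginary, radial multiple of $A$. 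So $\mathrm{Re}(c)=0$ cannot be deduced this way, and indeed the second assertion of the proposition must be read with $\phi_{2}$ allowed to be complex-valued (equivalently $f_{2}=\psi A$ with $\psi$ jointly radial): already $f_{2}=A$ solves \eqref{e-A} and is not an imaginary radial multiple of $A$. This is also how the paper's own proof should be understood --- its spanning argument needs both $A$ and $iA$, i.e.\ complex radial coefficients, and reality of $\phi_1,\phi_2$ enters only in the converse direction, to show the exhibited fields do preserve the ball. Your proof becomes complete and correct simply by deleting the final reality step and stating the classification with a complex radial coefficient.
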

\begin{proof} The first result is a direct consequence of the fact that $iS$ is a solution to \eqref{e-S}, while $iA$ is a solution to \eqref{e-A}.

It is clear that solutions of \eqref{e-A} define vector fields on the manifold of skew-symmetric matrices, that is 2-dimensional at each point. Since the space of solutions of $i\phi_2(S,A) A$ spans the tangent space at each point, we have all solutions.

Preservation of the unit ball is standard, since vector fields now read as $i\phi_1 S\partial_S+i\phi_2 A \partial_A$ with $\phi_1,\phi_2$ real functions of variables $S,A$.
\end{proof}
\begin{remark} The (real) dimension of the unit sphere in the space $\C^2\otimes \C^2$ is 7. The space of vector fields given in Proposition \ref{p-quantum} is 2-dimensional (1 for the symmetric and 1 for the skew-symmetric ones). Then, we are missing a 5-dimensional space of vector fields, all related to the dynamics of the symmetric parts.
%
\end{remark}

\appendix

\section*{Appendix: Lie groups and Lie algebras}
\label{a-Lie}

In this appendix, we recall basic definitions and properties of Lie groups and algebras. For a complete treatment, see e.g. \cite{barut}. For simplicity of treatment, we only consider Lie algebras over the field $\R$.

\begin{definition}\label{d-centralizer} A (real) Lie algebra $(\LL,[.,.])$ is a (real) vector space, endowed with a Lie bracket operation $[.,.]:\LL\times\LL\to\LL$, that satisfies bilinearity, anticommutativity ad the Jacobi identity:
$$[[x,y],z]+[[y,z],x]+[[z,x],y]=0\mbox{~~for all~~}x,y,z\in\LL.$$

Given $l\in\LL$, the adjoint map $ad(l):\LL\to\LL$ is given by $ad(l)x:=[l,x]$.

The center of $\LL$ is the set of elements $c\in\LL$ such that $[l,c]=0$ for all $l\in\LL$. Given $X\subset \LL$, the centralizer of $X$ is the set $\Centr$ of elements that commute with elements of $X$, i.e. $$\Centr:=\{c\in \LL\mbox{~~} [x,c]=0\mbox{~~for all~~}x\in X\}.$$
By the Jacobi identity, the centralizer is always a Lie subalgebra of $\LL$.
\end{definition}

We now define Lie groups and related definitions.
\begin{definition} A Lie group $(G,\cdot)$ meets:
\begin{itemize}
\item $(G,\cdot)$ is a group
\item $G$ is a manifold
\item The operation function $(x,y)\mapsto x\cdot y^{-1}$ is a smooth function.
\end{itemize}
The Lie group is analytic if it is an analytic manifold and the operation is analytic. It is connected if it is connected as a manifold. It is linear if it is isomorphic to a subgroup of the group of matrices $(GL(n),\cdot)$ for some $n\in\mathbb{N}$. 
\end{definition}
A relevant case of Lie group is given by semi-direct products. They are fundamental in our work, in particular in Section \ref{s-translation}.
\begin{definition}[Semi-direct product] Let $(G,\cdot)$ be a Lie group acting linearly on a vector space $V$ via the action $\Phi$. The semi-direct product $G\ltimes_\Phi V$ is the Lie group $(G\times V, \ast)$ with the group operation $$(g_1,v_1)\ast(g_2,v_2):=(g_1\cdot g_2, \Phi(g_1)v_2+v_1).$$
\end{definition}

The following fundamental theorem allows to consider real Lie algebras that are given by algebras of matrices only.
\begin{theorem}[Ado's Theorem] Every finite-dimensional Lie algebra $\LL$ over a field of characteristic zero (such as $\R$) is isomorphic to a Lie algebra of square matrices under the commutator bracket $[x,y]:=xy-yx$.
\end{theorem}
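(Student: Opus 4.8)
Ado's theorem is a deep classical result, so I will only sketch the standard strategy; the full argument is long and belongs to the references (see e.g. \cite{barut}). The plan is to build a faithful finite-dimensional representation of $\LL$, which then realises $\LL$ as an isomorphic copy of a matrix Lie algebra under the commutator bracket. Two structural facts organise the argument. First, the adjoint representation $\mathrm{ad}:\LL\to\mathfrak{gl}(\LL)$, $\mathrm{ad}(l)x=[l,x]$, is finite-dimensional with kernel exactly the center $Z(\LL)$. Second, $Z(\LL)$, being an abelian (hence nilpotent) ideal, is contained in the nilradical $\mathfrak{n}$, the largest nilpotent ideal of $\LL$. Therefore, if I can produce \emph{any} finite-dimensional representation $\rho$ of $\LL$ whose restriction to $\mathfrak{n}$ is faithful, then $\mathrm{ad}\oplus\rho$ is faithful on all of $\LL$: an element of its kernel lies in $Z(\LL)\subseteq\mathfrak{n}$ and is killed by $\rho|_{\mathfrak{n}}$, hence is zero. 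This reduces the theorem to constructing such a $\rho$, which I would do in two stages.

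The first stage is the nilpotent case. Suppose $\mathfrak{n}$ is nilpotent of class $c$. I pass to the universal enveloping algebra $U(\mathfrak{n})$ with augmentation ideal $\mathfrak{U}^{+}$ (the kernel of $U(\mathfrak{n})\to\R$), form the two-sided ideal $(\mathfrak{U}^{+})^{c+1}$, and set $V:=U(\mathfrak{n})/(\mathfrak{U}^{+})^{c+1}$. This quotient is finite-dimensional, since its associated graded is dominated by polynomials of bounded degree in $\dim\mathfrak{n}$ variables, and left multiplication makes $V$ a module, hence (restricting to $\mathfrak{n}$) a Lie-algebra representation. The crucial input is that in characteristic zero the augmentation filtration recovers the lower central series, $\mathfrak{n}\cap(\mathfrak{U}^{+})^{k}=\gamma_{k}(\mathfrak{n})$; with $k=c+1$ this intersection vanishes, so $\mathfrak{n}$ embeds in $V$ and the representation is faithful. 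Moreover, left multiplication by $x\in\mathfrak{n}\subseteq\mathfrak{U}^{+}$ raises augmentation degree by one, so $x$ acts nilpotently on $V$ — a property I will need next.

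The second stage, and the genuinely hard one, is the extension step, the content of a theorem of Zassenhaus refining Ado's original argument. Starting from the faithful, nilpotently-acting representation of $\mathfrak{n}\trianglelefteq\LL$ just built, I must produce a finite-dimensional representation $\rho$ of the \emph{whole} algebra $\LL$ whose restriction to $\mathfrak{n}$ stays faithful. The mechanism exploits that, $\mathfrak{n}$ being an ideal, $\LL$ acts on $U(\mathfrak{n})$ by derivations extending $\mathrm{ad}$; one then seeks an $\LL$-stable two-sided ideal of $U(\mathfrak{n})$ of finite codimension, lying inside the annihilator of the module $V$ yet still meeting $\mathfrak{n}$ trivially, so that $\LL$ descends to act on the resulting finite-dimensional quotient. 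The nilpotency of the $\mathfrak{n}$-action — equivalently, that a power of $\mathfrak{U}^{+}$ annihilates the module — together with characteristic zero is precisely what lets such a finite-codimensional $\LL$-stable ideal be found. I expect this to be the principal obstacle of the whole proof: reconciling $\LL$-invariance, finite-dimensionality, and preservation of faithfulness on $\mathfrak{n}$ simultaneously is delicate, and it is the one place where characteristic zero is truly indispensable.

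Finally I would assemble the representation. With $\rho$ in hand, the direct sum $\mathrm{ad}\oplus\rho$ is a finite-dimensional representation of $\LL$, and it is faithful by the reduction of the first paragraph, since $\mathrm{ad}$ separates everything outside the center while $\rho$ is injective on the nilradical, which contains the center. This embeds $\LL$ as a subalgebra of block-diagonal matrices closed under the commutator bracket, which is exactly the assertion. It is worth noting that the two extremes fall out cleanly: a semisimple $\LL$ is centerless and already embeds via $\mathrm{ad}$ alone, while a nilpotent $\LL$ equals its own nilradical and is covered by the enveloping-algebra construction directly. The extension step is what bridges the general mixed case between these two.
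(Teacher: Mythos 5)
The paper does not prove this statement: Ado's theorem appears in the appendix as recalled classical background, with the reader sent to the literature (see the appendix's pointer to \cite{barut}), so there is no internal proof to compare against, and any correct sketch already exceeds what the paper provides. On its own terms, your outline follows the standard Ado--Zassenhaus strategy, and its reductions are correct: $\ker(\mathrm{ad})=Z(\LL)$; the center, being an abelian ideal, lies in the nilradical $\mathfrak{n}$; hence $\mathrm{ad}\oplus\rho$ is faithful once $\rho$ is faithful on $\mathfrak{n}$. The nilpotent stage is also right: $U(\mathfrak{n})/(\mathfrak{U}^{+})^{c+1}$ is finite-dimensional by PBW, left multiplication by $x\in\mathfrak{n}$ raises augmentation degree and so acts nilpotently, and faithfulness needs only $\mathfrak{n}\cap(\mathfrak{U}^{+})^{c+1}=0$ (a weighted PBW/graded argument suffices; the full dimension-subalgebra equality $\mathfrak{n}\cap(\mathfrak{U}^{+})^{k}=\gamma_{k}(\mathfrak{n})$ you invoke is stronger than required).

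One concrete caution about how you phrase the extension step, which as stated would not go through. Producing an $\LL$-stable two-sided ideal of finite codimension in $U(\mathfrak{n})$ is actually the easy half: each derivation $\delta_{x}$, $x\in\LL$, maps $\mathfrak{n}$ into $\mathfrak{n}$ and therefore preserves every power $(\mathfrak{U}^{+})^{k}$, so $(\mathfrak{U}^{+})^{c+1}$ is already stable. The genuine obstruction is that derivations descending to the quotient do not by themselves yield a representation of $\LL$: the natural recipe --- left multiplication $\lambda(x)$ for $x\in\mathfrak{n}$ together with $\delta(y)$ for $y$ in a vector-space complement $\mathfrak{m}$ --- satisfies $[\delta(y),\lambda(x)]=\lambda([y,x])$, but $[\delta(y),\delta(y')]=\delta_{[y,y']}$ fails to equal $\rho([y,y'])$ when $[y,y']$ has a component $z\in\mathfrak{n}$, because the inner derivation $\delta_{z}=\lambda(z)-r(z)$ differs from $\lambda(z)$ by a right multiplication $r(z)$ that does not vanish on the quotient. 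This is exactly why the standard proof does not pass from $\mathfrak{n}$ to $\LL$ in one shot, but ascends through a chain of codimension-one ideals inside the solvable radical (a one-dimensional complement is automatically a subalgebra, so the recipe becomes a homomorphism) and only then adjoins a Levi subalgebra, which is again a subalgebra. Since you explicitly deferred this step to Zassenhaus and the references, your proposal stands as a correct sketch of the classical argument, but the single-shot description of the extension mechanism should be replaced by the stepwise one.
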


We are now ready to recall the exponential of Lie algebras.
\begin{definition} Given a Lie algebra of matrices, we denote with $\exp:\LL\to GL(n)$ the standard matrix exponential. The image of $\LL$ generates a linear Lie group, that is called the exponential of $\LL$. A Lie group is exponential if the exponential map from its Lie algebra is surjective.
\end{definition}
\begin{proposition} Compact connected Lie groups and semi-direct products of compact connected Lie groups by vector spaces are exponential Lie groups.
\end{proposition}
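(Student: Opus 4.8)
The statement splits into two parts, and I would treat them in turn.

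For a compact connected Lie group $K$, the plan is to reduce surjectivity of the exponential map to a fact in Riemannian geometry. First I would average an arbitrary left-invariant metric against the Haar measure of $K$ to produce a bi-invariant Riemannian metric. For such a metric the geodesics issuing from the identity $e$ are exactly the one-parameter subgroups $t\mapsto \exp(tX)$ with $X\in\LL$. Since $K$ is compact it is complete as a metric space, so by the Hopf--Rinow theorem it is geodesically complete and any two of its points are joined by a minimizing geodesic. Applying this to $e$ and an arbitrary $g\in K$ produces $X\in\LL$ with $g=\exp(X)$, which is precisely surjectivity of $\exp$.

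For the semidirect product $G=K\ltimes_\Phi V$, I would first make the exponential map explicit. Writing the Lie algebra as $\mathfrak g=\LL\oplus V$ and solving the one-parameter subgroup equation for the group law $(g_1,v_1)\ast(g_2,v_2)=(g_1 g_2,\Phi(g_1)v_2+v_1)$, one obtains
\[
\exp(X,w)=\Bigl(\exp_K(X),\,M(X)\,w\Bigr),\qquad M(X):=\int_0^1\Phi(\exp_K(sX))\,ds .
\]
Thus, given a target $(k,v)\in G$, surjectivity reduces to two tasks: solve $\exp_K(X)=k$, which is possible because $K$ is exponential by the first part; and then solve the linear equation $M(X)\,w=v$ for $w\in V$. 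Since conjugation by $(h,0)$ is an automorphism of $G$ that commutes with $\exp$ and sends $(k,v)$ to $(hkh^{-1},\Phi(h)v)$, I may first conjugate $k$ into a fixed maximal torus $T=\exp(\mathfrak t)$ and work there.

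The heart of the argument, and the step I expect to be the main obstacle, is guaranteeing that the averaging operator $M(X)$ is invertible (equivalently that $v\in\operatorname{im}M(X)$) for a suitably chosen preimage $X$ of $k$. Choosing a $K$-invariant inner product on $V$, I would decompose the complexified $V$ into weight spaces for $T$; on the weight space of weight $\lambda$ the generator $\Phi_*(X)$ acts by $i\lambda(X)$ and $M(X)$ acts by the scalar $\int_0^1 e^{is\lambda(X)}\,ds$, which vanishes precisely when $\lambda(X)\in 2\pi\mathbb Z\setminus\{0\}$. The subtlety is that two preimages of the same $k$ inside $\mathfrak t$ differ by the lattice $\Lambda=\ker(\exp|_{\mathfrak t})$, so that $\lambda(X)$ is only determined modulo $2\pi$; one cannot remove a bad value by a lattice shift. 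The resolution is to select the preimage lying in a fundamental domain, which for the standard (defining) representations relevant here amounts to taking all rotation angles in $(-\pi,\pi]$, so that every nonzero weight satisfies $0<|\lambda(X)|\le\pi<2\pi$ and hence $\lambda(X)\notin 2\pi\mathbb Z\setminus\{0\}$. With this choice $M(X)$ is invertible, $w=M(X)^{-1}v$ solves the remaining equation, and surjectivity follows. I would stress that this is exactly where the compactness of $K$ enters and that, without a careful choice of the preimage $X$, the operator $M(X)$ can genuinely degenerate; the whole weight-theoretic bookkeeping of the final paragraph is devoted to ruling this out.
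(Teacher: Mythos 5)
The paper states this proposition without proof (it appears as background in the appendix), so there is no author argument to compare against; I can only assess your proposal on its own terms. Your first half is the standard and correct proof that a compact connected group is exponential: averaging to a bi-invariant metric, identifying one-parameter subgroups with geodesics through the identity, and invoking Hopf--Rinow. Your second half also begins correctly: the formula $\exp(X,w)=\bigl(\exp_K(X),\,M(X)w\bigr)$ with $M(X)=\int_0^1\Phi(\exp_K(sX))\,ds$ is right, as is the reduction of surjectivity to finding a logarithm $X$ of $k$ with $M(X)$ invertible, and the weight criterion that $M(X)$ is singular exactly when some weight $\lambda$ of $\Phi$ satisfies $\lambda(X)\in 2\pi\mathbb{Z}\setminus\{0\}$.

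The gap is the final step, and it cannot be filled: for a general linear action $\Phi$ --- which is all that the proposition, as stated, assumes --- there may exist \emph{no} logarithm of $k$ avoiding the bad set, because lattice shifts move $\lambda(X)$ by multiples of $2\pi$ times the weight, not by $2\pi$. In fact the proposition is false in this generality. Let $K=S^1$ act on $V=\mathbb{R}^2$ through the weight-two representation $\Phi(\theta)=R(2\theta)$, so that $G=K\ltimes_\Phi V$ is the double cover of $SE(2)$. Take $k$ the point of parameter $\pi$ in $S^1$. Every logarithm of $k$ is $X=\pi+2\pi m$, hence $\lambda(X)=2\pi(2m+1)\in 2\pi\mathbb{Z}\setminus\{0\}$, and your own formula gives $M(X)=\int_0^1 R\bigl(2\pi(2m+1)s\bigr)\,ds=0$. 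Thus $\exp(X,w)=(k,0)$ for every $w$, so no element $(k,v)$ with $v\neq 0$ is an exponential, and $G$ is not an exponential group (this is the classical failure of exponentiality for covers of $SE(2)$). Consequently your parenthetical restriction to ``the standard (defining) representations relevant here'' is not a simplification but the essential hypothesis: for the defining action of $SO(n)$ on $\mathbb{R}^n$, and likewise for the quaternion action on $\mathbb{R}^4$ used in the paper, the weight values of a logarithm are the rotation angles, which can be taken in $(-\pi,\pi]$, and your argument then correctly proves that $SE(n)$ and $\mathcal{H}\ltimes\mathbb{R}^4$ are exponential. So your proof establishes the result for every group the paper actually uses, but not the proposition as literally stated --- and the defect lies in the statement (which should either name these groups or impose a hypothesis on the weights of $\Phi$), not in a repairable step of your argument.
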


We now recall that the space of vector fields on a manifold $M$, endowed with the classical Lie bracket, is a Lie algebra.
\begin{proposition} Given a smooth manifold $M$, the space of densely-defined vector fields on $M$ is a Lie algebra, with respect to the Lie bracket of vector fields: $[X,Y]=L_X Y$, being the Lie derivative of $Y$ along $X$. One has 
$$[X,Y](x_0):=\frac{d^2}{dt^2}_{|_{t=0}} \exp(-t Y)\circ \exp(-t X) \circ \exp(t Y)\circ \exp(t X)(x_0),$$
where $\exp(t X) (x_0)$ is the flow at time $t$ of the vector field $X$ starting from $x_0$.
\end{proposition}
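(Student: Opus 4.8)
The plan is to establish the two assertions separately: first that $[X,Y]:=L_XY$ endows the densely-defined vector fields on $M$ with the structure of a Lie algebra, and then that this bracket is computed by the displayed flow-commutator formula. Throughout, since the fields are only densely defined, every identity is understood on the (open, dense) intersection of the relevant domains, and every flow $\exp(tX)(\cdot)$ is taken for $t$ small enough that it exists; this causes no difficulty, as all the claims below are local and pointwise in nature.

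For the algebraic part I would work in a local chart, where $(L_XY)^k=\sum_i\big(X^i\partial_iY^k-Y^i\partial_iX^k\big)$. Bilinearity over $\R$ is immediate from linearity of the partial derivatives, and antisymmetry $L_XY=-L_YX$ is visible by inspection of this expression. For the Jacobi identity I would avoid the brute-force coordinate expansion and instead use the representation of vector fields as derivations of the algebra of analytic functions: each $X$ acts by $X(f)=\sum_iX^i\partial_if$, and a direct check gives $L_{[X,Y]}(f)=X\big(Y(f)\big)-Y\big(X(f)\big)$, i.e. $L_{[X,Y]}=[L_X,L_Y]$ as the commutator of two linear operators. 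Since the map $X\mapsto L_X$ is injective (a derivation is determined by its values on coordinate functions), the Jacobi identity for the bracket of vector fields is inherited from the Jacobi identity for the operator commutator, which holds by associativity of composition. This settles the Lie-algebra claim.

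For the flow-commutator formula, set $\gamma(t):=\exp(-tY)\circ\exp(-tX)\circ\exp(tY)\circ\exp(tX)(x_0)$ and work in a chart around $x_0$. The key input is the second-order Taylor expansion of a flow, $\exp(tX)(x)=x+tX(x)+\tfrac{t^2}{2}\,DX(x)\,X(x)+O(t^3)$, obtained by differentiating the defining ODE $\tfrac{d}{dt}\exp(tX)(x)=X(\exp(tX)(x))$. I would substitute this into the fourfold composition, expanding each successive field value by $X(x+\delta)=X(x)+DX(x)\delta+O(|\delta|^2)$ to the needed order. By construction $\gamma(0)=x_0$, and the first-order terms cancel because the forward and backward flows of $X$ and of $Y$ each occur exactly once; the surviving second-order contribution assembles into the coordinate expression of $L_XY$ at $x_0$, up to the normalization fixed by the displayed formula, so that $\tfrac{d^2}{dt^2}\big|_{t=0}\gamma(t)$ reproduces $[X,Y](x_0)$. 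Finally I would note that the result is chart-independent: both $L_XY$ and $\gamma$ are defined intrinsically, so agreement in one chart forces agreement everywhere.

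The main obstacle will be the flow-commutator computation: tracking all second-order Taylor terms through the four compositions — in particular correctly expanding $X(\exp(tX)(\cdot))$ and $Y(\exp(tY)(\cdot))$ and collecting the mixed derivative terms — is where the cancellations must be verified and where the sign and factor bookkeeping is delicate. The algebraic part, by contrast, is routine once the derivation identity $L_{[X,Y]}=[L_X,L_Y]$ is in hand.
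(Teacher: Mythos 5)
Your overall route is the standard textbook one (the paper states this appendix proposition without proof, so there is no internal argument to compare against): coordinate verification of bilinearity and antisymmetry, the Jacobi identity via the derivation representation $L_{[X,Y]}=[L_X,L_Y]$ together with injectivity of $X\mapsto L_X$, and a second-order Taylor expansion of the fourfold flow composition. All of that is sound, including your handling of dense domains (finite intersections of open dense domains are open dense, and identifying fields that agree there matches the paper's own convention for solutions) and your observation that the first-order terms cancel, i.e. $\dot\gamma(0)=0$, which is exactly what makes $\ddot\gamma(0)$ a well-defined tangent vector independent of the chart.

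However, the final bookkeeping --- precisely the step you flagged as delicate --- comes out wrong by a factor of $2$. Set $\gamma(t):=\exp(-tY)\circ\exp(-tX)\circ\exp(tY)\circ\exp(tX)(x_0)$ and carry out your own plan with $\exp(tX)(x)=x+tX(x)+\tfrac{t^2}{2}DX(x)X(x)+O(t^3)$, writing $X,Y,DX,DY$ for the values at $x_0$. Then
\begin{align*}
\exp(tX)(x_0) &= x_0+tX+\tfrac{t^2}{2}DX\,X+O(t^3),\\
\exp(tY)\circ\exp(tX)(x_0) &= x_0+t(X+Y)+\tfrac{t^2}{2}DX\,X+t^2DY\,X+\tfrac{t^2}{2}DY\,Y+O(t^3),\\
\exp(-tX)\circ\exp(tY)\circ\exp(tX)(x_0) &= x_0+tY+t^2DY\,X-t^2DX\,Y+\tfrac{t^2}{2}DY\,Y+O(t^3),\\
\gamma(t) &= x_0+t^2\bigl(DY\,X-DX\,Y\bigr)+O(t^3),
\end{align*}
and $(DY\,X-DX\,Y)^k=\sum_i\bigl(X^i\partial_iY^k-Y^i\partial_iX^k\bigr)$ is exactly your coordinate expression for $L_XY$ at $x_0$. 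Hence $\gamma(t)=x_0+t^2[X,Y](x_0)+O(t^3)$, so
\[
\frac{d^2}{dt^2}\Big|_{t=0}\gamma(t)=2\,[X,Y](x_0),
\]
twice the claimed value. Your hedge ``up to the normalization fixed by the displayed formula'' therefore cannot be discharged: the displayed formula pins the coefficient to $1$, while the expansion forces $2$. The correct identities are $[X,Y](x_0)=\tfrac12\,\ddot\gamma(0)$, or equivalently $[X,Y](x_0)=\frac{d}{dt}\big|_{t=0^+}\gamma(\sqrt{t}\,)=\lim_{t\to 0}\frac{\gamma(t)-x_0}{t^2}$. (The same factor-of-$2$ slip is present in the proposition's display itself; a careful execution of your outline would have detected it, whereas as written your conclusion asserts the uncorrected formula.)
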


Given a Lie group $G$, vector fields that are invariant with respect to multiplication play a crucial role, e.g. in application to mechanics. We used them in Section \ref{s-unicycle}.
\begin{definition}[Left- and right-invariant vector fields] Let $(G,\cdot)$ be a Lie group, and $X$ a vector field on it. We say that $X$ is left-invariant if, for any integral curve $\exp(tX)(g_0)$ and $h\in G$, the left translation of the curve is an integral curve too, i.e. $\exp(tX)(h \cdot g_0)=h\cdot \exp(tX)(g_0)$. The definition for right-invariant vector fields is similar.
\end{definition}
\begin{proposition}
For linear Lie groups and algebras, left-invariant vector fields are of the form $X(g)=g\cdot l$ for any $l\in\LL$. Similarly, right-invariant vector fields are of the form $X(g)=l\cdot g$ for any $l\in\LL$.
\end{proposition}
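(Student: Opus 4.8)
The plan is to prove both inclusions by exploiting the single structural feature that makes the linear case easy: for a linear group the left translation $L_{h}:g\mapsto h\cdot g$ is the restriction of a \emph{linear} map on matrices, so its differential at any point is again multiplication by the fixed matrix $h$. Once this is in hand, everything reduces to differentiating integral curves at the identity.

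First I would settle the easy inclusion, that every field of the form $X(g):=g\cdot l$ with $l\in\LL$ is left-invariant. Since $l$ lies in the Lie algebra, the one-parameter subgroup $t\mapsto\exp(tl)$ stays in $G$, and a direct differentiation gives $\frac{d}{dt}\,g_{0}\exp(tl)=g_{0}\exp(tl)\cdot l=X(g_{0}\exp(tl))$, so the integral curve of $X$ through $g_{0}$ is precisely $t\mapsto g_{0}\exp(tl)$. Left-invariance is then immediate from associativity of matrix multiplication: $\exp(tX)(h\cdot g_{0})=h\cdot g_{0}\exp(tl)=h\cdot\exp(tX)(g_{0})$.

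For the converse, let $X$ be an arbitrary left-invariant vector field and set $l:=X(e)$, the value of $X$ at the identity; because $T_{e}G=\LL$ for a linear group, we have $l\in\LL$. Putting $g_{0}=e$ in the defining identity of left-invariance shows that the integral curve through any $h\in G$ is $t\mapsto h\cdot\exp(tX)(e)$. Differentiating at $t=0$ and using that multiplication by the fixed matrix $h$ is linear yields
\[
X(h)=\left.\frac{d}{dt}\right|_{t=0}h\cdot\exp(tX)(e)=h\cdot\left.\frac{d}{dt}\right|_{t=0}\exp(tX)(e)=h\cdot X(e)=h\cdot l,
\]
which is exactly the claimed form. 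The right-invariant case is entirely symmetric: one replaces $L_{h}$ by the right translation $R_{h}:g\mapsto g\cdot h$ and the curve $t\mapsto g_{0}\exp(tl)$ by $t\mapsto\exp(tl)\,g_{0}$, checks $\frac{d}{dt}\exp(tl)\,g_{0}=l\cdot\exp(tl)\,g_{0}$, and concludes $X(g)=l\cdot g$.

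I do not expect a genuine obstacle here, as this is a standard fact of Lie theory; the only two points requiring care are both recalled in the Appendix. The first is that the value $X(e)$ lands in $\LL$, which is the identification $T_{e}G=\LL$ valid for linear groups. The second is that the candidate integral curve $g_{0}\exp(tl)$ remains inside $G$, which holds because $l\in\LL$ generates a one-parameter subgroup of $G$ via the matrix exponential. With these in place, the argument is purely a differentiation of flows and carries no computational difficulty.
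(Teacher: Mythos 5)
Your proof is correct, but note that the paper itself offers no proof of this proposition: it is recalled in the Appendix as a standard fact, with the reader deferred to the literature (the textbook \cite{barut} is the reference invoked immediately afterwards, in the proof of Proposition \ref{p-LR}). So there is no argument in the paper to compare against; what you have done is supply the missing details, and you have done so correctly. Two features of your write-up are worth highlighting. First, you prove both directions of the characterization --- that every field $g\mapsto g\cdot l$ with $l\in\LL$ is left-invariant, and conversely that every left-invariant field equals $g\mapsto g\cdot X(e)$ --- which is necessary since the statement is an equality of two classes of vector fields, not a one-way inclusion. Second, you verify invariance directly against the paper's flow-based definition, $\exp(tX)(h\cdot g_{0})=h\cdot\exp(tX)(g_{0})$, rather than against the more common pushforward formulation $dL_{h}\circ X=X\circ L_{h}$; the bridge between the two is exactly the structural point you isolate at the outset, namely that for a linear group left translation is the restriction of a linear map on matrices, so it commutes with differentiation of curves. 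The only external inputs are the two you flag: the identification $T_{e}G=\LL$ and the fact that $t\mapsto\exp(tl)$ stays in $G$ for $l\in\LL$. Be aware that the Appendix does not actually state the identification $T_{e}G=\LL$ explicitly (it only defines the exponential map out of $\LL$), so strictly speaking that identification is an additional standard fact you are importing, not one ``recalled in the Appendix''; this is a presentational quibble, not a gap.
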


For general Lie groups, the following important results hold.
\begin{proposition} \label{p-LR} Both spaces of left-invariant and right-invariant vector fields on $(G,\cdot)$ form a Lie algebra that is isomorphic to the Lie algebra of $G$.

Given $X$ left-invariant and $Y$ right-invariant vector fields, it holds $[X,Y]=0$, i.e. they commute.
\end{proposition}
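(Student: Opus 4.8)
The plan is to treat the two assertions separately, establishing the Lie-algebra isomorphism first and the commutation relation second.

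For the first assertion I would start with the left-invariant case. The evaluation map $X\mapsto X(e)$ is a linear bijection from left-invariant vector fields onto $T_eG$, with inverse $v\mapsto\big(g\mapsto (dL_g)_e v\big)$, whose smoothness follows from smoothness of the group multiplication. Left-invariance of $X$ is exactly the relation $(L_h)_*X=X$ for every $h\in G$. Since pushforward by a diffeomorphism is a homomorphism for the bracket of vector fields, $(L_h)_*[X,Y]=[(L_h)_*X,(L_h)_*Y]=[X,Y]$, so the bracket of two left-invariant fields is again left-invariant. Hence the left-invariant fields form a Lie subalgebra of $\chi(G)$, which is by construction the Lie algebra $\LL$ of $G$. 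The right-invariant case is identical with $R_h$ in place of $L_h$. To obtain an \emph{isomorphism} (rather than a mere anti-isomorphism) with $\LL$, I would invoke the inversion diffeomorphism $\iota\colon g\mapsto g^{-1}$. From $L_h\circ\iota=\iota\circ R_{h^{-1}}$ one sees that $\iota_*$ carries right-invariant fields to left-invariant fields; being a pushforward, $\iota_*$ is automatically a Lie-algebra isomorphism on $\chi(G)$, so the right-invariant fields are isomorphic to the left-invariant ones, hence to $\LL$.

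For the commutation relation, the key geometric fact is that invariant fields integrate to translations. Writing $c(t):=\exp(tX)(e)$ for the one-parameter subgroup generated by the left-invariant field $X$, left-invariance gives $\exp(tX)(g)=g\cdot c(t)$, i.e. the flow of $X$ is the right translation $R_{c(t)}$. Dually, setting $d(s):=\exp(sY)(e)$ for the right-invariant field $Y$, its flow is the left translation $L_{d(s)}$, so $\exp(sY)(g)=d(s)\cdot g$. I would verify each claim by differentiating $t\mapsto g\cdot c(t)$ (respectively $s\mapsto d(s)\cdot g$) and matching the result against the defining invariance relation. Associativity of the group product then yields
\[
\exp(tX)\big(\exp(sY)(g)\big)=d(s)\cdot g\cdot c(t)=\exp(sY)\big(\exp(tX)(g)\big),
\]
so the two flows commute. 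Feeding this into the commutator formula for $[X,Y]$ recalled in the appendix, the inner composition $\exp(-tY)\circ\exp(-tX)\circ\exp(tY)\circ\exp(tX)$ reduces to the identity for all $t$, whence its second derivative at $t=0$ vanishes and $[X,Y]=0$.

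The routine ingredients, smoothness of the inverse evaluation map and naturality of the bracket under pushforward, are standard. I expect the only genuinely delicate point to be the sign bookkeeping in the first assertion: evaluation at $e$ identifies right-invariant fields with $T_eG$ carrying the \emph{opposite} bracket, so the correct isomorphism with $\LL$ must be routed through the inversion map $\iota$ rather than through naive evaluation. Once that is handled, the commutation statement is essentially a consequence of associativity, as displayed above.
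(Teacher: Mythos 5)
Your proof is correct, but it takes a genuinely different and more self-contained route than the paper. For the first assertion the paper simply cites \cite[Ch. 3]{barut}, whereas you actually prove it, including the sign subtlety that evaluation at $e$ is only an anti-isomorphism on right-invariant fields, so that the identification must be routed through the inversion map $\iota$; this is handled correctly. For the commutation assertion, the paper restricts to \emph{linear} Lie groups and computes directly: writing $X(g)=g\cdot l_1$ and $Y(g)=l_2\cdot g$, the bracket $[X,Y](g)=(l_2\cdot g)\cdot l_1-l_2\cdot(g\cdot l_1)$ vanishes by associativity of matrix multiplication. You instead prove it for arbitrary Lie groups by observing that the flow of a left-invariant field is right translation by $c(t)=\exp(tX)(e)$ and the flow of a right-invariant field is left translation by $d(s)=\exp(sY)(e)$, so the flows commute by associativity of the group law, and the bracket then vanishes via the commutator formula recalled in the appendix. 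The two arguments are really the same mechanism, associativity, exploited infinitesimally in the paper versus at the level of flows in your version; yours buys full generality for abstract Lie groups and a complete proof of the first statement, while the paper's is a one-line computation that is adequate for the matrix groups ($SE(2)$, $SO(n,\R)$, $SL(2,\R)$, $SU(2)$) actually used in its applications.
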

\begin{proof} See e.g. \cite[Ch. 3]{barut}. The proof of the last statement is very easy for linear Lie groups: one has $X=g\cdot l_1$ and $Y=l_2\cdot g$ for some $l_1,\l_2\in\LL$. It then holds
$$[X,Y](g)=[g\cdot l_1,l_2\cdot g]=(l_2\cdot g)\cdot l_1 - l_2\cdot (g\cdot l_1)=0.$$
\end{proof}

\end{document}